\newcommand{\arxiv}[1]{\href{http://arxiv.org/abs/#1}{{\tt arXiv:#1}}}
\def\widebreve#1{\mathop{\vbox{\m@th\ialign{##\crcr\noalign{\kern3\p@}%
      \brevefill\crcr\noalign{\kern3\p@\nointerlineskip}%
      $\hfil\displaystyle{#1}\hfil$\crcr}}}\limits}
\def\brevefill{$\m@th \setbox\z@\hbox{$\braceld$}%
  \bracelu\leaders\vrule \@height\ht\z@ \@depth\z@\hfill\braceru$}
\def\@citecolor{blue}
\def\@linkcolor{blue}
\def\@urlcolor{blue}
\def\@urlcolor{blue}
\numberwithin{equation}{section}
\def\im{\operatorname{im}}
\def\Proj{\operatorname{Proj}}
\def\ker{\operatorname{ker}}
\def\dim{\operatorname{dim}}
\def\grade{\operatorname{grade}}
\def\supp{\operatorname{Supp}}
\def\ann{\operatorname{Ann}}
\def\rel.{\operatorname{rel.}}
\def\degree{\operatorname{degree}}
\def\ZZ{\mathbb Z}
\def\QQ{\mathbb Q}
\def\fa{\operatorname{for~ all}}
\DeclareMathOperator{\homology}{H}
\DeclareMathOperator{\length}{length}
\DeclareMathOperator{\type}{type}
\newcommand{\NN}{\mathbb N}
\newcommand{\mm}{\mathfrak m}
\newcommand{\ov}{\overline}
\newcommand{\lm}{{\lambda}}
\newcommand{\R}{\mathcal R}
\newcommand{\I}{\underline I}
\newcommand{\n}{\underline{n}}
\newcommand{\p}{\underline {p}}
\newcommand{\m}{\underline {m}}
\newcommand{\rrr}{\underline{r}}
\newcommand{\ii}{i=1,\ldots,s}
\newcommand{\idl}{{I_1},\ldots,{I_s}}
\newcommand{\id}{{I_1}\cdots{I_s}}
\newcommand{\lf}{\left(}
\newcommand{\rg}{\right)}
\newcommand{\fil}{\mathcal F}
\newcommand{\gi}{{G_i(\fil)}_{++}}
\newcommand{\ggi}{{G_i(\fil)}}
\newcommand{\ga}{{G(\fil)}}
\newcommand{\yy}{{y_1},\ldots,{y_d}}
\newcommand{\po}{P_{\fil}}
\newcommand{\ho}{H_{\fil}}
\newcommand{\hf}{H_{\I}}
\newcommand{\pf}{P_{\I}}
\newcommand{\bl}{\begin{lemma}}
\newcommand{\el}{\end{lemma}}
\newcommand{\bt}{\begin{theorem}}
\newcommand{\et}{\end{theorem}}
\newcommand{\ben}{\begin{enumerate}}
\newcommand{\een}{\end{enumerate}}
\newcommand{\bpf}{\begin{proof}}
\newcommand{\eepf}{\end{proof}}
\newcommand{\beqn}{\begin{eqnarray*}}
\newcommand{\eeqn}{\end{eqnarray*}}
\newcommand{\beqnn}{\begin{eqnarray}}
\newcommand{\eeqnn}{\end{eqnarray}}
\newcommand{\bd}{\begin{definition}}
\newcommand{\ed}{\end{definition}}
\newcommand{\bp}{\begin{proposition}}
\newcommand{\ep}{\end{proposition}}
\newcommand{\bc}{\begin{corollary}}
\newcommand{\ec}{\end{corollary}}
\newcommand{\bex}{\begin{example}}
\newcommand{\eex}{\end{example}}
\newcommand{\wrt}{with respect to }
\newcommand{\CM}{Cohen-Macaulay }
\newcommand{\wlg}{ Without loss of generality }
\theoremstyle{plain}
\newtheorem{theorem}{Theorem}[section]
\newtheorem{corollary}[theorem]{Corollary}
\newtheorem{proposition}[theorem]{Proposition}
\newtheorem{lemma}[theorem]{Lemma}
\newtheorem{conjecture}[theorem]{Conjecture}
\newtheorem{example}[theorem]{Example}
\newtheorem{definition}[theorem]{Definition}
\theoremstyle{remark}
\numberwithin{equation}{theorem}
\title[Grothendieck-Serre Formula]
{Variations on the Grothendieck-Serre Formula for Hilbert Functions and their Applications}
\thanks{The first author is partially supported by a grant from Infosys Foundation}
\thanks{The second author is supported by CSIR Fellowship of Government of India}
\thanks{Key words : Hilbert polynomial, admissible filtration, normal Hilbert polynomial, joint reduction, 
Local Cohomology, Rees algebra, multi-graded filtration, Grothendieck-Serre formula}
\author{Shreedevi K. Masuti}
\address{Institute of Mathematical Sciences, CIT Campus, Taramani, Chennai, India-600113}
\email{shreedevikm@cmi.ac.in}
\author{Parangama Sarkar}
\address{Department of Mathematics, Indian Institute of Technology Bombay, Mumbai, 400076, India}
\email{parangama@math.iitb.ac.in}
\author{J. K. Verma}
\address{Department of Mathematics, Indian Institute
of Technology Bombay, Mumbai, India}
\email{jkv@math.iitb.ac.in}
\begin{document}
\begin{abstract}
In this expository paper we present proofs of Grothendieck-Serre formula for
multi-graded algebras and Rees algebras for admissible multi-graded filtrations. As applications, we derive formulas of Sally for postulation number of admissible filtrations and Hilbert coefficients. We also discuss a partial solution of Itoh's conjecture by Kummini and Masuti. We present an alternate proof of Huneke-Ooishi Theorem and a genaralisation for multi-graded filtrations. 

\end{abstract}
\maketitle
\section{Introduction}
The objective of this expository paper is to collect together several
fundamental results about Hilbert coefficients of admissible filtrations of
ideals which can be proved using various avatars of the Grothendieck-Serre
formula for the difference of the Hilbert function and Hilbert polynomial of a
finite graded module of a standard graded Noetherian ring.  The proofs presented
here provide a unified way of approaching these  results. Some of these results
are not known in the multi-graded case. We hope that the unified approach
presented here could lead to suitable multi-graded analogues of these results.

We begin by recalling the Grothendieck-Serre formula. For the sake of
simplicity, we assume that the graded rings considered in this paper are
standard and Noetherian.  Let $R=\bigoplus\limits_{n=0}^\infty R_n$ be a standard
graded Noetherian ring where $R_0$ is an Artinian local ring. Let
$M=\bigoplus\limits_{n\in \ZZ}M_n $ be a finite graded $R$-module of dimension $d.$ The
Hilbert function  of $M$ is the function $H(M,n)=\lm_{R_0} (M_n)$ for all $n \in
\ZZ.$  Here $\lm$ denotes the length function. Serre showed that there exists an integer $m$ so that $H(M,n)$ is given by a polynomial $P(M,x)\in \QQ[x]$ of degree $d-1$ such
that $H(M,n)=P(M,n) $  for all $n >  m.$ The smallest such $m$ is called the
postulation number of $M.$ Let
$R_+$ denote the homogeneous ideal of $R$ generated by elements of positive
degree and $[H^i_{R_+}(M)]_n$ denote the $n^{th}$ graded component of the 
$i^{th}$ local cohomology module  $H^i_{R_+}(M)$ of $M$ with respect to the
ideal $R_+.$ We put $\lm_{R_0}([H^i_{R_+}(M)]_n)=h^i_{R_+}(M)_n.$

\begin{theorem} {\em(Grothendieck-Serre)} For all $n \in \ZZ,$ we have
$$H(M,n)-P(M,n)=\sum_{i=0}^d(-1)^i h^i_{R_+}(M)_n.$$
\end{theorem}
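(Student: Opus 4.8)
The plan is to prove the identity by induction on $d=\dim M$, after reducing to the case where the residue field $R_0/\mm_0$ is infinite. For the reduction I would pass to the faithfully flat base change $R_0\to R_0(t):=R_0[t]_{\mm_0 R_0[t]}$ and $R\to R\otimes_{R_0}R_0(t)$; this leaves unchanged all the quantities in the formula (lengths of graded components, the Hilbert polynomial, and the graded local cohomology modules, since $R_+$ is generated in degree one and flat base change commutes with local cohomology), so it suffices to treat the case $R_0/\mm_0$ infinite.

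Throughout I would write $D(M,n):=H(M,n)-P(M,n)-\sum_{i\ge 0}(-1)^i h^i_{R_+}(M)_n$, a finite sum, and aim to show $D(M,n)=0$ for all $n$. Two structural facts are needed at the outset. First, each $[H^i_{R_+}(M)]_n$ is a finitely generated $R_0$-module, hence of finite length, and $H^i_{R_+}(M)=0$ for $i>d$ by Grothendieck vanishing; consequently $D$ is additive on short exact sequences of finite graded modules, because $H(-,n)$, $P(-,n)$ and---via the degreewise long exact sequence of local cohomology together with the additivity of length on exact sequences of finite length modules---the alternating sum $\sum_i(-1)^i h^i_{R_+}(-)_n$ are all additive. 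Second, $[H^i_{R_+}(M)]_n=0$ for $n\gg 0$ and all $i$; combined with $H(M,n)=P(M,n)$ for $n\gg 0$, this shows $D(M,n)=0$ for $n\gg 0$.

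For the base case $d=0$, the module $M$ has finite length, so it is $R_+$-torsion, giving $H^0_{R_+}(M)=M$ and $H^i_{R_+}(M)=0$ for $i>0$, while $P(M,n)=0$; then $D(M,n)=H(M,n)-h^0_{R_+}(M)_n=0$. For the inductive step with $d\ge 1$, using that $R_0/\mm_0$ is infinite I would choose, by prime avoidance in the finite-dimensional $R_0/\mm_0$-vector space $R_1/\mm_0 R_1$, a linear form $x\in R_1$ lying outside every associated prime of $M$ except the graded maximal ideal. Then $K:=(0:_M x)$ has finite length and $M/xM$ has dimension $d-1$, so by induction $D(K,\cdot)=0$ and $D(M/xM,\cdot)=0$. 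Splitting the four-term sequence $0\to K(-1)\to M(-1)\xrightarrow{\ x\ } M\to M/xM\to 0$ into $0\to K(-1)\to M(-1)\to xM\to 0$ and $0\to xM\to M\to M/xM\to 0$, and using additivity of $D$ together with its manifest shift-invariance ($D(M(-1),n)=D(M,n-1)$), I would obtain $D(M,n)=D(M,n-1)-D(K,n-1)+D(M/xM,n)=D(M,n-1)$. Hence $D(M,\cdot)$ is constant, and since it vanishes for $n\gg 0$ it vanishes identically.

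The main obstacle is not the additivity bookkeeping---which forces $D$ to be constant once the right form $x$ is in hand---but rather securing the two structural inputs in the graded setting: the finite length of the graded pieces of local cohomology and, above all, their vanishing in high degrees (equivalently, finiteness of $a_i(M)=\sup\{n: [H^i_{R_+}(M)]_n\ne 0\}$). I expect to isolate these as preliminary lemmas, and to take care that the flat base change to an infinite residue field genuinely preserves every term of the formula.
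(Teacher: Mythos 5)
Your proposal is correct, and its skeleton --- induction on dimension, a homogeneous parameter found by prime avoidance, the degreewise long exact sequence forcing the defect function $D$ to be shift-invariant, and the eventual vanishing of graded local cohomology to finish --- is the same as that of the paper's proof, which establishes the multigraded generalization (Theorem \ref{GS}, of which this statement is the case $s=1$). The execution differs in two ways. First, you reduce to an infinite residue field via the base change $R_0 \to R_0(t)$ so as to find $x$ in degree one, and you then carry the torsion along, handling the kernel $K=(0:_M x)$ by additivity of $D$ across your two short exact sequences; the paper instead first passes to $M'=M/H^0_{R_+}(M)$ (which changes $H$ by exactly $h^0$ and changes nothing else in the formula) and then uses prime avoidance \cite[Lemma 2.4.1]{gc} to produce a homogeneous element of some degree $\p\geq e$ that is an honest nonzerodivisor on $M'$, so neither a kernel term nor any residue-field hypothesis ever appears --- shift-invariance of the defect by $\p$ rather than by $1$ is just as effective in the last step. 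The paper's route buys self-containedness (no base-change compatibilities for lengths, Hilbert polynomials and local cohomology to verify) and transfers verbatim to the $\NN^s$-graded setting, where the induction runs on relevant dimension; your route buys the classical Serre-style Euler-characteristic bookkeeping and avoids having to compare $M$ with $M/H^0_{R_+}(M)$ in the main induction. One caveat: the two structural inputs you defer --- finite length and high-degree vanishing of the graded pieces $[H^i_{R_+}(M)]_n$ --- are precisely Proposition \ref{vl} of the paper, and the proof given there uses the very same torsion-quotient and prime-avoidance devices you postponed; so a complete write-up must still include that lemma (or cite it, e.g.\ from Brodmann--Sharp), and your argument is not fully self-contained without it.
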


\noindent
The GSF was proved in the fundamental paper \cite{serre} of J.-P. Serre. We quote from \cite{brod}:  ``In this paper, Serre introduced the theory of coherent sheaves over algebraic varieties over an algebraically closed field and a cohomology theory of such varieties with coefficients in coherent sheaves. He did speak of algebraic coherent sheaves, as at the first time he managed to introduce these theories with purely algebraic tools, using consequently the Zariski topology instead of the complex topology and homological methods instead of tools from multivariate complex analysis. 
Since then, the cohomology theory introduced in Serre's paper is often called Serre cohomology or sheaf cohomology.

One of the achievement of Serre's paper is the Grothendieck-Serre Formula, which is given there in terms of sheaf cohomology and showed in this way that sheaf cohomology gives a functorial understanding of the so called postulation problem of algebraic geometry, the problem which classically consisted in understanding the difference between the Hilbert function and the Hilbert-polynomial of the coordinate ring of a projective variety."

The Grothendieck-Serre Formula (GSF) is valid for non-standard graded rings also
if the Hilbert polynomial $P(M,x)$ is replaced by the Hilbert quasi-polynomial
\cite[Theorem 4.4.3]{BH}. The GSF has been generalized in several directions.
For some of the applications, we need it in the context of $\ZZ^s$-graded
modules over standard $\NN^s$-graded rings. In order to state the GSF for $\ZZ^s$-graded module, first we set up nptation and recall some definitions. Let $(R,\mm)$ be a Noetherian local ring and $\idl$ be $\mm$-primary ideals of $R.$ We  put  $e=(1,\ldots,1),\; \underline{0}= (0,\ldots,0)\in{\ZZ}^s$ and for all $\ii,$ $e_i=(0,\ldots,1,\ldots,0)\in{\ZZ}^s$ where $1$ occurs at $i$th position.  
 For $\n=(n_1,\ldots,n_s)\in{\ZZ}^s,$ we write ${\I}^{\n}=I_{1}^{n_1}\cdots I_{s}^{n_s}$ and $\n^+=(n_1^+,\ldots,n_s^+)$ where $ n_i^+=\max\{0,n_i\}$ for all $\ii.$ For $\alpha=({\alpha}_1,\ldots,{\alpha}_s)\in{\NN}^s,$ we put $|\alpha|={\alpha}_1+\cdots+{\alpha}_s.$ We define $\m=(m_1,\ldots,m_s)\geq\n=(n_1,\ldots,n_s)$ if $m_i\geq n_i$ for all $\ii.$ By the phrase ``for all large $\n$" we mean $\n\in\NN^s$ and $n_i\gg 0$ for all $\ii.$ For an $\NN^s$ (or a $\ZZ^s$)-graded ring $T$, the ideal generated by elements of degree $e$ is denoted by $T_{++}.$
\bd
A set of ideals $\fil=\lbrace\fil(\n)\rbrace_{\n\in \ZZ^s}$ is called a $\ZZ^s$-graded {\bf{$\I=(\idl)$-filtration}} if for all $\m,\n\in\ZZ^s,$
{\rm (i)} ${\I}^{\n}\subseteq\fil(\n),$
 {\rm (ii)} $\fil(\n)\fil(\m)\subseteq\fil(\n+\m)$  and {\rm (iii)} if $\m\geq\n,$ $\fil(\m)\subseteq\fil(\n)$.
\ed
Let $R=\bigoplus\limits_{\n\in\NN^s}R_{\n}$ be a standard Noetherian $\NN^s$-graded ring defined over a local ring $(R_{\underline 0},\mm)$ and $R_{++}=\bigoplus\limits_{\n\geq e}R_{\n}.$ Let ${\Proj}(R)$ denote the set of all homogeneous prime ideals $P$ in $R$ such that $R_{++}\nsubseteq P.$ For a finitely generated module $M,$ set $\supp_{++}(M)=\{P\in{\Proj}(R)\mid M_P\neq 0\}.$ Note that  $\supp_{++}(M)=V_{++}(\ann(M))$ \cite[Lemma 2.2.5]{gc}, \cite{hhrz}. 
\bd
The {\bf{relevant dimension}} of $M$ is
\[ \rel.\dim(M) = \left\{
  \begin{array}{l l}
   s-1  & \quad \text{if $\supp_{++}(M)=\emptyset$ }\\
    \max\{\dim\lf R/P\rg\mid P\in\supp_{++}(M)\} & \quad \text{if $\supp_{++}(M)\neq\emptyset.$ }
  \end{array} \right. \]
\ed
By \cite[Lemma 1.1]{hhrz}, $\dim\supp_{++}(M)=\rel.\dim(M)-s.$
M. Herrmann, E. Hyry, J. Ribbe and Z. Tang \cite[Theorem 4.1]{hhrz} proved that if $R=\bigoplus\limits_{\n\in\NN^s}R_{\n}$ is a standard Noetherian $\NN^s$-graded ring defined over an Artinian local ring $(R_{\underline 0},\mm)$ and $M=\bigoplus\limits_{\n\in\ZZ^s}M_{\n}$ is a finitely generated $\ZZ^s$-graded $R$-module then there exists a polynomial, called the Hilbert polynomial of $M, $ $P_M(x_1,x_2,\ldots,x_s) \in\QQ[x_1,\ldots,x_s]$ of total degree $\dim\supp_{++}(M)$ satisfying $P_M(\n)=\lm(M_{\n})$ for all large $\n.$ Moreover all monomials of highest degree in this polynomial have nonnegative coefficients.
\\The next two results are due to G. Colom\'{e}-Nin \cite[Propositions 2.4.2, 2.4.3]{gc} for non-standard multigraded rings. In section 2, we present her proofs to prove the same results for standard multi-graded rings for the sake of simplicity. These results were proved in the bigraded case by A.V. Jayanthan and J. K. Verma \cite{jv}.
\bp\label{vl}
Let $R=\bigoplus\limits_{\n\in\NN^s}R_{\n}$ be a standard Noetherian $\NN^s$-graded ring defined over a local ring $(R_{\underline 0},\mm)$ and $M=\bigoplus\limits_{\n\in\ZZ^s}M_{\n}$ a finitely generated $\ZZ^s$-graded $R$-module. Then
\ben
{
\item[(1)] For all $i\geq 0$ and $\n\in\ZZ^s$, $[H_{R_{++}}^i(M)]_{\n}$ is finitely generated 
$R_{\underline 0}$-module.
\item[(2)] For all large $\n$ and $i\geq 0,$ $[H_{R_{++}}^i(M)]_{\n}=0.$
}\een
\ep
\begin{theorem}{\em{(Grothendieck-Serre formula for $\ZZ^s$-graded modules)}} 
Let $R=\bigoplus\limits_{\n\in\NN^s}R_{\n}$ be a standard Noetherian
$\NN^s$-graded ring defined over an Artinian local ring $(R_{\underline 0},\mm)$
and $M=\bigoplus\limits_{\n\in\ZZ^s}M_{\n}$ a finitely generated $\ZZ^s$-graded
$R$-module. Let $H_M(\n)=\lm(M_{\n})$ and $P_M(x_1,\ldots,x_s)$ be the  the Hilbert polynomial of $M.$ Then for all $\n\in \ZZ^s,$ 
$$H_M(\n)-P_M(\n)= \sum\limits_{j\geq 0}(-1)^j h_{R_{++}}^j(M)_{\n}.$$
\end{theorem}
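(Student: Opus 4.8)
The plan is to show that both sides of the asserted identity, viewed as functions of $\n$, are invariant under translation by each $e_i$ and that they agree for all large $\n$; together these two facts force them to coincide everywhere. Write $D_M(\n)=H_M(\n)-P_M(\n)$ and $\chi_M(\n)=\sum_{j\geq 0}(-1)^j h_{R_{++}}^j(M)_{\n}$. First I would record that $\chi_M$ is well defined and finite: each graded piece $[H_{R_{++}}^j(M)]_{\n}$ has finite length by Proposition \ref{vl}(1), and only finitely many cohomological degrees $j$ contribute since $H_{R_{++}}^j(M)=0$ for $j\gg 0$. Moreover both $D_M$ and $\chi_M$ vanish for all large $\n$: the former because $P_M(\n)=\lm(M_{\n})$ for large $\n$, the latter by Proposition \ref{vl}(2). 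The essential structural input is \emph{additivity}: a short exact sequence $0\to M'\to M\to M''\to 0$ of finitely generated $\ZZ^s$-graded modules yields $D_M=D_{M'}+D_{M''}$ (additivity of length together with uniqueness of the Hilbert polynomial) and, via the graded long exact sequence of local cohomology, $\chi_M=\chi_{M'}+\chi_{M''}$.

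The argument then proceeds by induction on $t=\dim\supp_{++}(M)$. In the base case $\supp_{++}(M)=\emptyset$ the module $M$ is $R_{++}$-torsion, so $H_{R_{++}}^0(M)=M$ and $H_{R_{++}}^j(M)=0$ for $j>0$; hence $\chi_M(\n)=\lm(M_{\n})=H_M(\n)$, while $P_M=0$ since its total degree is $\dim\supp_{++}(M)<0$ (equivalently $M_{\n}=0$ for large $\n$), giving $D_M=H_M=\chi_M$.

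For the inductive step I would produce, in each grading direction $i$, a filter-regular element that drops the dimension. After the harmless faithfully flat base change making the residue field of $R_{\underline 0}$ infinite (which preserves lengths, Hilbert polynomials, and the graded pieces of local cohomology), observe that for every $P\in\supp_{++}(M)$ one has $R_{e_i}\nsubseteq P$: otherwise $R_e=R_{e_1}\cdots R_{e_s}\subseteq P$ and, by standardness, $R_{++}\subseteq P$, contradicting $P\in\Proj(R)$. Homogeneous prime avoidance then yields $y\in R_{e_i}$ avoiding every associated prime of $M$ that lies in $\supp_{++}(M)$. Such a $y$ produces a four-term exact sequence $0\to K\to M(-e_i)\xrightarrow{y} M\to M/yM\to 0$ with $K=(0:_M y)(-e_i)$ supported on $V(R_{++})$, so $D_K=\chi_K$ by the base case, while $\dim\supp_{++}(M/yM)=t-1$, so $D_{M/yM}=\chi_{M/yM}$ by induction. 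Splitting this sequence into two short exact sequences and applying additivity to the function $D_{(-)}-\chi_{(-)}$, the contributions of $K$ and of $M/yM$ cancel and I am left with $(D_M-\chi_M)(\n)=(D_M-\chi_M)(\n-e_i)$.

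Carrying this out for every $i=1,\ldots,s$ shows that $f:=D_M-\chi_M$ is invariant under translation by each $e_i$, hence is constant on $\ZZ^s$; since $f$ vanishes for large $\n$, it vanishes identically, which is exactly the claimed formula. The main obstacle is the inductive step, specifically guaranteeing an element $y$ of the prescribed multidegree $e_i$ that is simultaneously filter-regular and dimension-reducing; this is where the standardness of $R$ (so that $R_{++}$ is controlled by the components $R_{e_i}$), the infinite-residue-field reduction, and homogeneous prime avoidance all enter. A secondary point requiring care is the bookkeeping of the four-term sequence and the verification that the connecting maps in the local cohomology sequence are homogeneous of degree $\underline{0}$, so that additivity is valid grade by grade.
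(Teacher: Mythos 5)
Your proof is correct, and its skeleton --- induction on the dimension of the relevant support, a base case of $R_{++}$-torsion modules, multiplication by an element avoiding the associated primes in $\Proj(R)$, and the conclusion via translation invariance together with eventual vanishing of $H_M-P_M-\chi_M$ --- coincides with the paper's. Two implementation choices differ. First, the paper passes to $M'=M/H^0_{R_{++}}(M)$, checks that this changes neither $P_M$ nor the higher local cohomology and only shifts $H_M$ by $h^0_{R_{++}}(M)_{\n}$, and thereby obtains a genuine nonzerodivisor on $M'$, so a single short exact sequence suffices; you keep $M$ itself and absorb the torsion kernel $(0:_M y)(-e_i)$ into a four-term sequence, paying with the additivity of $D_{(-)}-\chi_{(-)}$ over short exact sequences (correct, and you rightly flag that the connecting maps are homogeneous of degree $\underline{0}$). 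Second, the paper uses one element $x\in R_{\p}$ of some degree $\p\geq e$ (via \cite[Lemma 2.4.1]{gc}), which yields invariance of $f_{M'}-\chi_{M'}$ under translation by the single vector $\p$ --- enough to conclude, since $\p\geq e$ makes $\n+k\p$ eventually large --- and therefore needs no hypothesis on the residue field; you insist on elements of degree exactly $e_i$ for each $i$, which buys invariance under every coordinate translation but forces the faithfully flat base change to an infinite residue field so that graded prime avoidance can be carried out inside $R_{e_i}$. One small point of bookkeeping: your $y$ only guarantees $\dim\supp_{++}(M/yM)\leq t-1$ (the relevant support may drop by more, or become empty), so the induction should be read as strong induction on $t$ with the empty-support case as base; this is cosmetic and not a gap.
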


The above form of the GSF leads to another version of it which gives the difference between the Hilbert polynomial and the function  of $\ZZ^s$-graded filtrations of ideals in terms of local cohomology modules of various forms of Rees rings and associated graded rings of ideals. To define these,
let $t_1,t_2, \dots, t_s$ be indeterminates and 
${\underline{t}}^{\n}={t_1}^{n_1}\cdots{t_s}^{n_s}.$ We put 

\medskip 

\begin{center}
\begin{tabular}{ll}
  $\displaystyle\mathcal{R}(\fil)=\bigoplus\limits_{\n\in \NN^s}
{\fil}(\n){\underline{t}}^{\n}$  & {{the Rees ring of $\fil$}} , \\
 $\displaystyle\mathcal{R}'(\fil)=\bigoplus\limits_{\n\in{\ZZ}^s}{\fil}(\n){\underline{t}}^{\n}$ 
 & {{the extended Rees ring of $\fil$}},  \\
 $\ga=\displaystyle\bigoplus\limits_{\n\in{\NN}^s}\frac{{\fil}(\n)}{{\fil}(\n+e)}$ &
   {{the associated  multi-graded ring of $\fil$ with respect to $\fil(e)$}}, \\
    
   $\displaystyle\ggi=\bigoplus\limits_{\n\in{\NN}^s}\frac{{\fil}(\n)}{{\fil}(\n+e_i)}$ 
   &   the associated graded ring of $\mathcal F$ with respect to $\mathcal F(e_i).$
 \end{tabular}
 \end{center}
For $\fil=\{\I^{\n}\}_{\n \in \ZZ^s}$, we set $\mathcal R(\fil)=\mathcal R(\I)$ and $\mathcal R^\prime(\fil)=
\mathcal R^\prime(\I),$ $G(\fil)=G(\I)$ and $G_i(\fil)=G_i(\I)$ for all $\ii.$
 
\bd A $\ZZ^s$-graded $\I$-filtration $\fil=\lbrace\fil(\n)\rbrace_{\n\in \ZZ^s}$ of ideals in $R$ is 
called an $\I$-{\bf{admissible filtration}} if 
${{\fil}(\n)}={\fil}(\n^+)$ and $\mathcal{R}'(\fil)$
 is a  finite $\mathcal{R}'(\I)$-module. For $s=1,$ if a filtration $\fil$ is $I$-admissible for some $\mm$-primary ideal $I$ then it is also $I_1$-admissible.\ed
 
Primary examples of $\I$-admissible filtrations are $\{\I^{\n}\}_{\n \in \ZZ^s}$ in a Noetherian local ring and $\{\ov{\I^{\n}}\}_{\n \in \ZZ^s}$ in an analytically unramified local ring. Recall that for an ideal $I$ in $R,$ the integral closure of $I$ is the ideal 
\begin{eqnarray*}
 \ov{I}:=\{x \in R \mid x^n+a_1x^{n-1}+\cdots+a_{n-1}x+a_n=0 \mbox{ for some } n \in \NN \mbox{ and } a_i \in I^i \text{ for  } i=1,2\ldots,n \}.
 \end{eqnarray*}

We now set up the notation for a variety of Hilbert polynomials associated to filtrations of ideals.
Let $I$  be an $\mm$-primary ideal of a Noetherian local ring $(R,\mm)$ of dimension $d.$ 
 For a $\ZZ$-graded $I$-admissible filtration $\mathcal I=\{I_n\}_{n\in\ZZ},$ Marley \cite{marleythesis} proved existence of a polynomial $P_{\mathcal I}(x)\in\QQ[x]$ of degree $d,$ written in the form, $$P_{\mathcal I}(n)=e_0(\mathcal I)\binom{n+d-1}{d}-e_1(\mathcal I)
\binom{n+d-2}{d-1}+\cdots+(-1)^d e_d(\mathcal I)
$$ such that $P_{\mathcal I}(n)=H_{\mathcal I}(n)$ for all large $n,$ where $H_{\mathcal I}(n)=\lm (R/I_n)$ is the {\bf Hilbert function} of the filtration $\mathcal I.$ The coefficients $e_i(\mathcal I)$ for $i=0,1,\ldots,d$ are integers, called the {\bf Hilbert coefficients} of $\mathcal I.$ The coefficient $e_0(\mathcal I)$ is called the multiplicity of $\mathcal I.$ P. Samuel \cite{samuel} showed existence of this polynomial for the $I$-adic filtration $\{I^n\}_{n\in\ZZ}.$ Many results about Hilbert polynomials for admissible filtrations were proved in \cite{GR} and \cite{RV}.
\\For $\mm$-primary ideals $\idl,$ B. Teissier \cite{T} proved that  for all $\n$ sufficiently large, the {\bf Hilbert function} $\hf(\n)=\lm\lf{R}/{{\I}^{\n}}\rg$ coincides with a polynomial $$\pf(\n)=\displaystyle\sum\limits_{\substack{\alpha=({\alpha}_1,\ldots,{\alpha}_s)\in{\NN}^s \\ |\alpha|\leq d}}(-1)^{d-|{\alpha}|}{e_{\alpha}}(\I)\binom{{n_1}+{{\alpha}_1}-1}{{\alpha}_1}\cdots\binom{{n_s}+{{\alpha}_s}-1}{{\alpha}_s}$$ of degree $d$, called the {\bf{Hilbert polynomial}} of $\I.$ Here we assume that $s \geq 2$ in order to write  $\pf(\n)$ in the above form. This was proved by P. B. Bhattacharya for $s=2$ in \cite{B}. Here $e_{\alpha}(\I)$ are integers which are called the {\bf{Hilbert coefficients}} of $\I.$ D. Rees \cite{rees3} showed that ${e_{\alpha}}(\I)>0$ for $|\alpha|=d.$ These are called the 
{\bf{mixed multiplicities}} of $\I.$ 
\\For an $\I$-admissible filtration $\fil=\lbrace\fil(\n)\rbrace_{\n\in \ZZ^s}$ in a Noetherian local ring $(R,\mm)$ of dimension $d,$ Rees \cite{rees3} showed the existence of a polynomial 
$$\po(\n)=\displaystyle\sum\limits_{\substack{\alpha=({\alpha}_1,\ldots,{\alpha}_s)\in{\NN}^s \\ 
|\alpha|\leq d}}(-1)^{d-|{\alpha}|}{e_{\alpha}}(\fil)\binom{{n_1}+{{\alpha}_1}-1}{{\alpha}_1}
\cdots\binom{{n_s}+{{\alpha}_s}-1}{{\alpha}_s}$$ of degree $d$ which coincides with the {\bf{Hilbert function}} $\ho(\n)=\lm\lf{R}/{{\fil}(\n)}\rg$ for all large $\n$ \cite{rees3}. This polynomial is called the {\bf{Hilbert polynomial}} of $\fil.$ Rees \cite[Theorem 2.4]{rees3} proved that $e_{\alpha}(\fil)=e_{\alpha}(\I)$ for all $\alpha\in\NN^s$ such that $|\alpha|=d.$ 

In Section 2, we prove the following version of the GSF for the extended Rees algebras. It was proved for $I$-adic filtration and  for nonnegative integers  by Johnston-Verma \cite{1jv} and for $\ZZ$-graded admissible filtration of ideals by
C. Blancafort for all integers \cite{blancafort}.

\begin{theorem}\cite[Theorem 4.3]{msv}
Let $(R,\mm)$ be a Noetherian local ring of dimension $d$ and $\idl$ be $\mm$-primary ideals of $R.$ Let $\fil=\lbrace\fil(\n)\rbrace_{\n\in{\ZZ}^s}$ be an $\I$-admissible filtration of ideals in $R.$ Then
\ben
{\item[(1)] $h_{\R_{++}}^i(\mathcal{R}'(\fil))_{\n}<\infty$ for all $i\geq 0$ and $\n\in\ZZ^s.$ 
\item[(2)] $\po(\n)-\ho(\n)=\sum\limits_{i\geq 0}(-1)^i h_{\R_{++}}^i(\mathcal{R}'(\fil))_{\n}$ 
for all $\n\in\ZZ^s.$}\een
\end{theorem}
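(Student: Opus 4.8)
The plan is to deduce the formula from the Grothendieck--Serre formula for $\ZZ^s$-graded modules (the theorem quoted above) applied not to $\mathcal{R}'(\fil)$ itself, but to the associated multi-graded ring $\ga=G(\fil)$, which, unlike $\mathcal{R}'(\fil)$, is defined over an \emph{Artinian} local ring. The device linking the two is the element $u=(t_1\cdots t_s)^{-1}$ of degree $-e$. Since $\mathcal{R}'(\fil)\subseteq R[t_1^{\pm 1},\ldots,t_s^{\pm 1}]$ and $u$ is a unit in the latter ring, $u$ is a non-zerodivisor on $\mathcal{R}'(\fil)$, and a direct check on graded pieces ($\fil(\n)\underline{t}^{\n}/\fil(\n+e)\underline{t}^{\n}\cong\fil(\n)/\fil(\n+e)$) shows $\mathcal{R}'(\fil)/u\mathcal{R}'(\fil)\cong\ga$. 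Thus there is a short exact sequence of $\ZZ^s$-graded $\mathcal{R}'(\I)$-modules
\begin{equation*}
0\lra \mathcal{R}'(\fil)(e)\xrightarrow{\,u\,}\mathcal{R}'(\fil)\lra \ga\lra 0,
\end{equation*}
to which I apply $H^\bullet_{\R_{++}}(-)$. Because $\ga$ is a finite module over the standard $\NN^s$-graded ring $G(\I)$, whose degree-$\underline 0$ component $R/\I^{e}$ is Artinian, Proposition \ref{vl} gives $h^j_{\R_{++}}(\ga)_{\n}<\infty$ for all $\n$ with vanishing for $\n\gg 0$, and the $\ZZ^s$-graded Grothendieck--Serre formula holds for $\ga$.

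For part (1) I would first record the vanishing $h^j_{\R_{++}}(\mathcal{R}'(\fil))_{\n}=0$ for $\n\gg 0$. This is the main obstacle: unlike $\ga$, the extended Rees module lives over the non-Artinian, merely $\ZZ^s$-graded ring $\mathcal{R}'(\I)$, so Proposition \ref{vl} does not apply to it directly. I expect to obtain it by comparison with the ordinary Rees module $\mathcal{R}(\fil)$, which is finite over the standard $\NN^s$-graded ring $\mathcal{R}(\I)$ over the local ring $R$ and which agrees with $\mathcal{R}'(\fil)$ in all non-negative degrees; Proposition \ref{vl}(2) kills its local cohomology in large degrees, and one transfers this using that the long exact sequence above forces $u\colon h^j_{\R_{++}}(\mathcal{R}'(\fil))_{\n+e}\to h^j_{\R_{++}}(\mathcal{R}'(\fil))_{\n}$ to be an isomorphism once $\n\gg 0$. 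Granting the vanishing, finiteness of $h^j_{\R_{++}}(\mathcal{R}'(\fil))_{\n}$ for every $\n$ follows by descending induction in the direction $e$: in the exact piece
\begin{equation*}
h^j_{\R_{++}}(\mathcal{R}'(\fil))_{\n+e}\xrightarrow{\,u\,} h^j_{\R_{++}}(\mathcal{R}'(\fil))_{\n}\lra h^j_{\R_{++}}(\ga)_{\n},
\end{equation*}
the right term has finite length and the left term has finite length by the inductive hypothesis, so the middle term, whose length is the sum of the lengths of the image of $u$ and of a submodule of the right term, is finite too. Since every $\n\in\ZZ^s$ lies on the line $\{\n+ke\}_{k\ge 0}$, which enters the region $\n\gg 0$, starting the induction from the vanishing there proves (1).

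For part (2) I pass to the alternating sum $\chi(\n):=\sum_{i\ge 0}(-1)^i h^i_{\R_{++}}(\mathcal{R}'(\fil))_{\n}$, a well-defined integer by (1). Additivity of length along the long exact sequence attached to the displayed short exact sequence gives, in each degree,
\begin{equation*}
\chi(\n+e)-\chi(\n)+\sum_{j\ge 0}(-1)^j h^j_{\R_{++}}(\ga)_{\n}=0.
\end{equation*}
The Grothendieck--Serre formula for $\ga$ rewrites the last sum as $H_{\ga}(\n)-P_{\ga}(\n)$; since $H_{\ga}(\n)=\lm(\fil(\n)/\fil(\n+e))=\ho(\n+e)-\ho(\n)$ and, by uniqueness of Hilbert polynomials, $P_{\ga}(\n)=\po(\n+e)-\po(\n)$, writing $F=\po-\ho$ this sum equals $F(\n)-F(\n+e)$. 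Substituting yields $\chi(\n+e)-F(\n+e)=\chi(\n)-F(\n)$, so $\chi-F$ is invariant under translation by $e$. Evaluating at $\n+ke$ for $k\gg 0$ and using both the vanishing from (1) (so $\chi(\n+ke)=0$) and $\ho(\n+ke)=\po(\n+ke)$ (so $F(\n+ke)=0$), I conclude $\chi(\n)=F(\n)=\po(\n)-\ho(\n)$ for every $\n$, which is (2). The only genuinely delicate input is the high-degree vanishing invoked at the start of the inductive argument, and this is where the $\ZZ^s$-versus-$\NN^s$ mismatch between the extended Rees ring and the hypotheses of Proposition \ref{vl} must be bridged.
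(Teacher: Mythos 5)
Your overall strategy (a multiplication short exact sequence whose cokernel is an associated graded object living over an Artinian base, plus Proposition \ref{vl}, Proposition \ref{result 2} and Theorem \ref{GS}, followed by a translation-invariance argument) is exactly the paper's, and for $s=1$ your argument is literally the paper's proof. But for $s\geq 2$ there is a genuine gap at the very first step: the identification $\mathcal{R}'(\fil)/u\mathcal{R}'(\fil)\cong\ga$ is false. The degree-$\m$ component of the quotient is $\fil(\m^+)/\fil((\m+e)^+)$ for \emph{every} $\m\in\ZZ^s$, and this vanishes only when $\m\in\NN^s$ or $\m\leq -e$; in a mixed-sign degree such as $\m=(-k,0)$ (with $s=2$, $k\geq1$) it equals $R/\fil(0,1)\neq 0$. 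Consequently the quotient $Q=\mathcal{R}'(\fil)/u\mathcal{R}'(\fil)$ has infinitely many nonzero components running off to $-\infty$ in a single coordinate, so it is not a finitely generated module over any standard $\NN^s$-graded ring defined over an Artinian local ring: neither Proposition \ref{vl} nor Theorem \ref{GS} applies to it, and your finiteness and Euler-characteristic computations for the third term of the long exact sequence are unjustified precisely in the mixed-sign degrees. Worse, even if you replace $\ga$ by the honest quotient $Q$ wherever the two agree, your recursion $\chi(\n+e)=\chi(\n)-\chi_Q(\n)$ is then only usable for $\n\in\NN^s$ or $\n\leq -e$, and the diagonal ray $\{\n+ke\}_{k\geq 0}$ starting at a mixed-sign $\n$ (say $\n=(-1,5)$) never passes through that region before reaching $\NN^s$; so the propagation from $\n\gg 0$ cannot reach all of $\ZZ^s$. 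This is a structural obstruction to the diagonal direction $e$, not a fixable oversight.

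The paper's remedy is to use the $s$ coordinate directions separately: for each $i$ it takes the sequence $0\lra\mathcal{R}'(\fil)(e_i)\lra\mathcal{R}'(\fil)\lra G'_i(\fil)\lra 0$ with $G'_i(\fil)=\mathcal{R}'(\fil)/\mathcal{R}'(\fil)(e_i)$, observes that $G'_i(\fil)$ agrees with $\ggi$ exactly in degrees $\n\in\NN^s$ or $n_i<0$ (since $n_i<0$ forces $\n^+=(\n+e_i)^+$), and runs your finiteness/Euler-characteristic argument in that region for each $i$. The function $h(\n)=\chi_{\mathcal{R}'(\fil)}(\n)-(\po(\n)-\ho(\n))$ then satisfies $h(\n+e_i)=h(\n)$ on that region, and an arbitrary $\n\in\ZZ^s$ can be joined to the region $\n\gg 0$ by unit steps that stay admissible (first increment each negative coordinate using its own $e_i$, then move freely inside $\NN^s$). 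If you rewrite your proof with these $s$ sequences in place of the single diagonal one — keeping everything else, including your use of Proposition \ref{result 2} to transfer the large-degree vanishing from $\mathcal{R}(\fil)$ to $\mathcal{R}'(\fil)$, which matches the paper — it becomes correct.
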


In section 3, we derive explicit formulas in terms of the Ratliff-Rush closure
filtration of a multi-graded filtration of ideals  for the graded components of
the local cohomology modules of certain Rees rings and associated graded rings.
For an ideal $I$ in a Noetherian ring $R,$ L. J. Ratliff and D. Rush \cite{rr}
introduced the ideal $$\tilde I=\bigcup_{k \geq 1}(I^{k+1}:I^k),$$ called the
{\bf Ratliff-Rush closure }of $I.$ If $I$ has a regular element then the ideal $\tilde I$ has some nice properties
such as for all large $n,$ $(\tilde I)^n=I^n, \tilde{I^n}=I^n$ etc. If $I$ is an
$\mm$-primary regular ideal then $\tilde I$ is the largest ideal with respect to
inclusion having the same Hilbert polynomial as that of $I.$ Blancafort 
\cite{blancafort} introduced Ratliff-Rush 
closure filtration of an $\mathbb N$-graded good filtration. Let $(R,\mm)$ be a
Noetherian local ring and $\idl$ be $\mm$-primary ideals of $R.$ Let
$\fil=\{\fil(\n)\}_{\n\in\ZZ^s}$ be an $\I$-admissible filtration of ideals in $R.$ We need the concept of the Ratliff-Rush closure of $\fil$ in order to find formulas for certain local cohomology modules.
\bd
The {\it{\bf{Ratliff-Rush closure filtration}}} 
 of $\fil=\lbrace\fil(\n)\rbrace_{\n\in \ZZ^s}$ is the filtration of ideals
$\breve{\fil}=\{\breve \fil(\n)\}_{\n\in\ZZ^s}$ 
 given by
 \ben
 \item[(1)] $\breve \fil(\n)=\bigcup\limits_{k\geq 1} (\fil(\n+ke):\fil(e)^k)$
for all $\n\in\NN^s,$
 \item[(2)] $\breve \fil(\n)=\breve\fil(\n^+)$ for all $\n\in\ZZ^s.$
 \een
\ed
The next three results to be proved in section 3 are needed to prove several results about Hilbert coefficients in section 5. 
\bp\cite[Proposition 3.5]{msv}
Let $(R,\mm)$ be a Cohen-Macaulay local ring of dimension two with infinite
residue field and $\idl$ be $\mm$-primary ideals in $R.$ Let  
$\fil=\lbrace\fil(\n)\rbrace_{\n\in{\ZZ}^s}$ be an $\I$-admissible filtration of ideals in $R.$ 
Then for all $\n\in\NN^s,$ 
$$[H_{\R(\fil)_{++}}^1(\R(\fil))]_{\n}\cong\frac{\breve\fil(\n)}{\fil(\n)}.$$

\ep
\bp\cite[Theorem 3.5]{blancafort}
Let $(R,\mm)$ be a Cohen-Macaulay local ring of dimension two with infinite
residue field, $I$ an $\mm$-primary ideal of $R$ and $\fil=\lbrace I_n\rbrace_{n\in{\ZZ}}$ be an $I$-admissible
filtration of ideals in $R.$ 
Then \[ [H_{\R(\fil)_{+}}^1(\R^\prime(\fil))]_{n} = \left\{
  \begin{array}{l l}
   \breve{I_n}/I_n  & \quad \text{if $n\geq 0$ }\\
    0 & \quad \text{if $n<0.$ }
  \end{array} \right. \] 
\ep

\begin{theorem}\cite[Theorem 3.3]{msv}
Let $(R,\mm)$ be a Noetherian local ring of dimension $d\geq 1$ with infinite
residue field and $\idl$ be $\mm$-primary ideals in $R$ such that
grade$(\id)\geq 1.$ Let $\fil=\lbrace\fil(\n)\rbrace_{\n\in \ZZ^s}$ be an $\I$-
admissible filtration of ideals in  $R.$ Then for all $\n\in \NN^s$ and $\ii,$ 
$$[H^0_{G_i(\mathcal F)_{++}}(G_i(\mathcal
F))]_{\n}=\frac{\breve\fil(\n+e_i)\cap\fil(\n)}{\fil(\n+e_i)}.
$$ \et

In section 4, we present several applications of the GSF for Rees algebra and associated graded ring of an ideal.  The first
application due to J. D. Sally, who pioneered these techniques for the study of
Hilbert-Samuel coefficients, shows the connection of the postulation number
with reduction number. Let $(R,\mm)$ be a Noetherian local ring, $I$ be
an $\mm$-primary ideal and $\fil=\lbrace I_n\rbrace_{n\in\ZZ}$ be an admissible $I$-filtration of ideals
in $R.$
\bd
A {\bf{reduction}} of an $I$-admissible filtration $\mathcal F=\lbrace I_n\rbrace_{n\in\ZZ}$ is an ideal $J\subseteq I_1$ such that $JI_{n} = I_{n+1}$ for all large $n.$ A {\bf{minimal reduction}} of $\mathcal F$ is a reduction of $\mathcal F$  minimal with respect to inclusion. For a minimal reduction $J$ of $\mathcal F,$ we set $${r_J}(\mathcal F)=\min\lbrace m:JI_n=I_{n+1}
\mbox{ for }n\geq m\rbrace\mbox{ and }{r}(\mathcal F)=\min\lbrace {r_J}(\mathcal I):J\mbox{ is a minimal reduction of }\mathcal F \rbrace.$$ For $\fil=\lbrace I^n\rbrace_{n\in\ZZ},$ we set $r_J(\fil)=r_J(I)$ and $r(\fil)=r(I).$\ed
\bd
An integer $n\in\ZZ$ is called the {\bf{postulation number}} of $\fil,$ denoted by $n(\fil),$ if
$\po(m)=\ho(m)$ for all $m>n$ and $\po(n)\neq\ho(n).$ It is denoted by
$n(\fil).$
\ed
The next result was proved by J. D. Sally \cite{sa} for the $\mm$-adic filtration. Her proof remains valid for any admissible filtration.
\bt 
Let $(R,\mm)$ be a \CM local ring of dimension $d\geq 1$ with infinite residue
field, $I$ an $\mm$-primary ideal and $\fil=\{I_n\}$ be an $I$-admissible
filtration of ideals in $R.$ Let $H_R(n)=\lm\lf{I_n}/{I_{n+1}}\rg$ and
$P_R(X)\in \QQ[X]$ such that $P_R(n)=H_R(n)$ for all large $n.$ Suppose $\grade
G(\fil)_+\geq d-1.$ Then for a minimal reduction $J=(x_1,\ldots, x_d)$ of
$\fil,$ $H_R(r_J(\fil)-d)\neq P_R(r_J(\fil)-d)$ and $H_R(n)= P_R(n)$ for all
$n\geq r_J(\fil)-d+1.$ 
\et
The following result is due to Marley \cite[Corollary 3.8]{marleythesis}. We
give another proof which follows from the above theorem.
\bt\cite[Corollary 3.8]{marleythesis}
Let $(R,\mm)$ be a \CM local ring of dimension $d\geq 1$ with infinite residue
field, $I$ an $\mm$-primary ideal and $\fil=\{I_n\}$ be an $I$-admissible filtration
of ideals in $R.$ Let $\grade G(\fil)_+\geq d-1.$ Then $r(\fil)=n(\fil)+d.$
\et
In section 5, we discuss several results about non-negativity of Hilbert
coefficients of multi-graded filtrations of ideals as easy consequences of the
GSF for such filtrations. We prove the following result which implies earlier
results of Northcott, Narita and Marley.

\bt\cite[Theorem 5.6]{msv}
Let $(R,\mm)$ be a Cohen-Macaulay local ring of dimension $d\geq 1$ and $\idl$
be $\mm$-primary ideals of $R.$ Let $\fil=\lbrace\fil(\n)\rbrace_{\n\in\ZZ^s}$
be an $\I$-admissible filtration of ideals in $R.$ Then 
\ben{
\item[(1)] $e_{\alpha}(\fil)\geq 0$ where
$\alpha=({\alpha}_1,\ldots,{\alpha}_s)\in\NN^s,$ 
$|\alpha|\geq d-1.$
\item[(2)] $e_{\alpha}(\fil)\geq 0$ where
$\alpha=({\alpha}_1,\ldots,{\alpha}_s)\in\NN^s,$ $|\alpha|= d-2$ and $d\geq
2.$}\een
\et

We also discuss the results of S. Itoh about non-negativity and vanishing of
the third coefficient of the normal Hilbert polynomial of the filtration
$\{\ov{I^n}\}_{n\in\ZZ}$ in an analytically unramified Cohen-Macaulay local ring. We prove an analogue of a theorem due to Sally for
admissible filtrations in two-dimensional Cohen-Macaulay local rings which gives
explicit formulas for all the coefficients
of their Hilbert polynomial. Here again we show that these formulas follow in a
natural way from the  variant of GSF for Rees algebra of the filtration.
\bp
Let $(R,\mm)$ be a two-dimensional \CM local ring, $I$ be any $\mm$-primary
ideal of $R$ and $\fil=\lbrace I_n\rbrace_{n\in{\ZZ}}$ an admissible
$I$-filtration of ideals in $R.$ Then 
\ben{
\item $\displaystyle{\lm\lf H_{\R(\fil)_+}^2(\R(\fil)_0\rg=e_2(\fil)},$
\item $\displaystyle{\lm\lf
H_{\R(\fil)_+}^2(\R(\fil))_1\rg=e_0(\fil)-e_1(\fil)+e_2(\fil)-\lm\lf\frac{R}{
\breve {I_1}}\rg},$
\item $\displaystyle{\lm\lf
H_{\R(\fil)_+}^2(\R(\fil))_{-1}\rg=e_1(\fil)+e_2(\fil)}.$}\een 
\ep

C. Huneke \cite{huneke} and A. Ooishi \cite{ooishi} independently proved that if $(R,\mm)$ is a \CM local ring of dimension $d\geq 1$ and $I$ is an $\mm$-primary ideal then $e_0(I)-e_1(I)=\lambda(R/I)$ if and only if $r(I)\leq 1.$ Huckaba and Marley \cite{huckaba-marley} proved this result for $\ZZ$-graded admissible filtrations. In Section 6, we present a proof, due to Blancafort, for $\ZZ$-graded admissible filtrations of Huneke-Ooishi Theorem. The original proofs due to Huneke and Ooishi did not employ local
cohomology and relied on use of superficial sequences. Our purpose in
presenting the alternative proof using the GSF for Rees algebras is to motivate
the proof of an analogue of the Huneke-Ooishi Theorem for multi-graded
filtrations of ideals. 

\bt\cite[Theorem 4.3.6]{blancafort thesis}
Let $(R,\mm)$ be a \CM local ring with infinite residue field of dimension $d\geq 1,$ $I_1$ an $\mm$-primary ideal and $\fil=\lbrace I_n\rbrace_{n\in{\ZZ}}$ be an $I_1$-admissible filtration of ideals in $R.$ Then the following are equivalent:
\ben{
\item $e_0(\fil)-e_1(\fil)=\lm\lf{R}/{I_1}\rg,$
\item $r(\fil)\leq 1.$
}\een
In this case, $e_2(\fil)=\cdots=e_d(\fil)=0,$ $G(\fil)$ is Cohen-Macaulay, $ n(\fil) \leq 0,$ $r(\fil)$ is independent of the reduction chosen and  $\fil=\lbrace I_1^n\rbrace .$

\et

Using the GSF for multi-graded Rees algebras we prove the following analogue of the Huneke-Ooishi Theorem for multi-graded admissible filtrations.
\begin{theorem}\cite[Theorem 5.5]{msv}
Let $(R,\mm)$ be a Cohen-Macaulay local ring of dimension $d\geq 1$ and $\idl$
be $\mm$-primary ideals of $R.$ Let $\fil=\lbrace\fil(\n)\rbrace_{\n\in \ZZ^s}$
be an $\I$-admissible filtration of ideals in $R.$ Then for all $\ii,$\\
$(1)$ $e_{(d-1)e_i}(\mathcal F) \geq e_1(\mathcal F^{(i)}),$\\
 $(2)$ $e(I_i) - e_{(d-1)e_i}(\mathcal F) \leq \lm(R/\mathcal F{(e_i)}),$\\
 $(3)$ $e(I_i) - e_{(d-1)e_i}(\mathcal F) = \lm(R/\mathcal F{(e_i)})$
 if and only if 
$r(\mathcal F^{(i)}) \leq 1$ and $e_{(d-1)e_i}(\mathcal F)=e_1(\mathcal
F^{(i)}).$
\end{theorem}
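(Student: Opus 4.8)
The plan is to cut the $s$-graded filtration down to its one-dimensional slice in the $i$-th direction and then feed that slice into the single-graded Huneke-Ooishi theorem (Blancafort's theorem stated above). Fix $i$ and write $\mathcal F^{(i)}=\{\fil(ne_i)\}_{n\in\ZZ}$ for the $i$-th axial filtration; since $\fil$ is $\I$-admissible, $\mathcal F^{(i)}$ is an $I_i$-admissible $\ZZ$-graded filtration, its ``degree one'' term is $\fil(e_i)$, and its multiplicity is $e_0(\mathcal F^{(i)})=e(I_i)$ (because $I_i^n\subseteq\fil(ne_i)$ forces the same leading term). First I would evaluate the multi-graded Hilbert polynomial on the axis: putting $n_j=0$ for $j\neq i$ annihilates every term of $\po$ carrying a factor $\binom{n_j+\alpha_j-1}{\alpha_j}$ with $\alpha_j\geq 1$, so that
\[
\po(ne_i)=\sum_{k=0}^{d}(-1)^{k}\,e_{(d-k)e_i}(\fil)\binom{n+d-k-1}{d-k}.
\]
Comparing with $P_{\mathcal F^{(i)}}(n)=\sum_{k=0}^{d}(-1)^k e_k(\mathcal F^{(i)})\binom{n+d-k-1}{d-k}$, the leading coefficients agree, $e_{de_i}(\fil)=e(I_i)$ (also by Rees's theorem, as $|de_i|=d$), but the two polynomials differ for $s\geq 2$ because $\po(ne_i)$ is the restriction of the multi-graded polynomial to the boundary of the positive orthant, whereas $P_{\mathcal F^{(i)}}(n)$ agrees with the true axial Hilbert function $\ho(ne_i)=\lm(R/\fil(ne_i))$ for large $n$. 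Their difference is a polynomial of degree at most $d-1$ whose coefficient of $\binom{n+d-2}{d-1}$ is exactly $e_1(\mathcal F^{(i)})-e_{(d-1)e_i}(\fil)$.

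The heart of the matter is part (1), namely $e_{(d-1)e_i}(\fil)\geq e_1(\mathcal F^{(i)})$, which by the previous sentence is equivalent to $\ho(ne_i)\geq\po(ne_i)$ to leading order, i.e. the genuine axial Hilbert function dominates the restriction of the multi-graded polynomial. To see this I would invoke the Grothendieck-Serre formula for the extended Rees algebra stated above: for large $n$,
\[
\po(ne_i)-P_{\mathcal F^{(i)}}(n)=\po(ne_i)-\ho(ne_i)=\sum_{j\geq 0}(-1)^j h^j_{\R_{++}}(\mathcal R'(\fil))_{ne_i},
\]
so that (1) becomes the assertion that the degree $d-1$ term of this alternating sum is non-positive. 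I would establish this by induction on $d$: choosing a sufficiently general $x\in\fil(e_i)$ that is superficial for $\fil$ in the $i$-th direction and passing to $\overline R=R/(x)$ with the image filtration $\overline\fil$ lowers $d$ by one while specializing $e_{(d-1)e_i}(\fil)$ to $e_{(d-2)e_i}(\overline\fil)$ and $e_1(\mathcal F^{(i)})$ to $e_1(\overline{\mathcal F}^{(i)})$, so the inductive hypothesis closes the step. The base case $d=1$ is the positivity $\ho(ne_i)\geq\po(ne_i)$ for large $n$, which I would read off from the above formula once the lower local cohomology of $\mathcal R'(\fil)$ is seen to contribute with the correct sign on the axis. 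Verifying both the superficial-element specialization of these two coefficients and the sign in the base case is the main obstacle of the proof.

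Granting (1), parts (2) and (3) are formal. The Northcott-type inequality underlying Blancafort's theorem, applied to the admissible filtration $\mathcal F^{(i)}$ with multiplicity $e(I_i)$ and first term $\fil(e_i)$, gives $e(I_i)-e_1(\mathcal F^{(i)})\leq\lm(R/\fil(e_i))$, with equality if and only if $r(\mathcal F^{(i)})\leq 1$. Chaining this with (1) yields
\[
e(I_i)-e_{(d-1)e_i}(\fil)\ \leq\ e(I_i)-e_1(\mathcal F^{(i)})\ \leq\ \lm(R/\fil(e_i)),
\]
which is precisely (2). For (3), equality in (2) forces equality at both links: the first link being an equality is exactly the condition $e_{(d-1)e_i}(\fil)=e_1(\mathcal F^{(i)})$, and the second being an equality is the equality case of Huneke-Ooishi for $\mathcal F^{(i)}$, namely $r(\mathcal F^{(i)})\leq 1$; conversely, these two conditions feed back through the chain to give equality in (2). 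This proves (3) and completes the argument.
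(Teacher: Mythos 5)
Your proposal follows essentially the same route as the paper: slice to the axial filtration $\mathcal F^{(i)}$, prove (1) by induction on $d$ with the base case $d=1$ read off from the Grothendieck--Serre formula for $\mathcal R'(\fil)$ (where the alternating sum reduces to $-h^1\leq 0$) and the inductive step via an element of $I_i$ as in Rees's Lemma, then chain (1) with Northcott's inequality and the Huneke--Ooishi/Blancafort equality case to get (2) and (3). The two steps you flag as the main obstacles are precisely what the paper's proof supplies: Rees's Lemma yields $(\fil(\n):x_i)=\fil(\n-e_i)$ for $n_i\gg0$, whence $P_{\fil'}(\n)=\po(\n)-\po(\n-e_i)$ and the specializations $e_{(d-1)e_i}(\fil)=e_{(d-2)e_i}(\fil')$, $e_1(\mathcal F^{(i)})=e_1(\mathcal F'^{(i)})$, while for $d=1$ the vanishing of $H^0$ and of $H^j$ for $j\geq 2$ forces the required sign.
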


The vanishing of the constant term of the Hilbert polynomial of a filtration gives insight into the filtration as well as the local ring. 
 For any $\mm$-primary ideal $I$ in an analytically unramified local ring $(R,\mm)$
of dimension $d,$  the {\bf normal Hilbert  function} of $I$ is defined to be the function $\ov{H}(I,n)=\lm(R/\ov{I^n}).$ Rees showed that for large $n,$ it is given by the {\bf normal Hilbert polynomial}  
$$
\ov{P}(I,x)=\ov{e}_0(I)\binom{x+d-1}{d} -\ov{e}_1(I) \binom{x+d-2}{d-1}+\cdots+(-1)^d\ov{e}_d(I).$$
The integers $\ov{e}_0(I), \ov{e}_1(I), \ldots, \ov{e}_d(I)$ are called the
{\bf normal Hilbert coefficients} of $I.$ Rees defined a   $2$-dimensional normal analytically unramified  local ring $(R,\mm)$  to be {\bf pseudo-rational} if $\ov{e}_2(I)=0$ for all  $\mm$-primary ideals. It can be shown that two-dimensional local rings having a rational singularity are pseudo-rational. It is natural to characterise $\ov{e}_2(I)=0$ in terms of computable data. This was 
considered by Huneke \cite{huneke} in which he proved

 \bt \cite[Theorem 4.5]{huneke}
Let $(R,\mm)$ be a two-dimensional analytically unramified Cohen-Macaulay local
ring. Let $I$ be an  $\mm$-primary ideal.  Then $\ov{e}_2(I)=0$ if and only if  $\ov{I^n}=(x,y)\ov{I^{n-1}}$ for $n \geq 2$ and for any minimal reduction $(x,y)$ of $I.$
\et

A similar result was proved by Itoh \cite{itoh} about vanishing of ${\ov e}_2(I).$ Using the GSF for multi-graded filtrations, we prove the following theorem which  characterises   the vanishing of the constant term of the Hilbert polynomial of a multi-graded admissible filtration and derive results of Itoh and Huneke as consequences.
\bt
\cite[Theorem 5.7]{msv}
Let $(R,\mm)$ be a Cohen-Macaulay local ring of dimension two and $\idl$ be $\mm$-primary ideals of $R.$ Let $\fil=\lbrace\fil(\n)\rbrace_{\n\in\ZZ^s}$ be an $\I$-admissible filtration of ideals in $R.$ Then
$e_{\underline 0}(\fil)=0$ implies $e(I_i)-e_{e_i}(\fil)=\lm\lf\frac{R}{\breve\fil(e_i)}\rg$ for all $\ii.$ Suppose $\breve\fil$ is $\I$-admissible filtration, then the converse is also true.
 \et
 
 {\bf Acknowledgement:} We thank Professor Markus Brodmann for discussions.
 
\section{Variations on the Grothendieck-Serre formula}
The main aim of this section is to prove the Grothendieck-Serre formula (Theorem \ref{GS}) and its variations. In \cite[Propositions 2.4.2, 2.4.3]{gc}, Colom\'{e}-Nin proved the Grothendieck-Serre formula  for non-standard multigraded rings. For the sake of simplicity, we present her proof for standard multi-graded rings. As a consequence we prove \cite[Theorem 4.3]{msv} (Theorem \ref{r3}) which relates the difference of Hilbert polynomial and Hilbert function of an ${\I}$-admissible filtration to the Euler characteristic of the extended multi-Rees algebra. 

We recall the following Lemma from \cite{gc} which is needed to prove Theorem \ref{GS}.

\bl\cite[Lemma 2.2.8]{gc}\label{reduce}
Let $R=\bigoplus\limits_{\n\in\NN^s}R_{\n}$ be a standard Noetherian $\NN^s$-graded ring defined over a local ring $(R_{\underline 0},\mm)$ and $M=\bigoplus\limits_{\n\in\ZZ^s}M_{\n}$ a finitely generated $\ZZ^s$-graded $R$-module. Let $x\in R_{\n}$ where $\n \geq e$ and $x\notin \bigcup\limits_{P\in Ass(M)}P.$ Then $\rel.\dim(M/xM)=\rel.\dim(M)-1.$
\el

\bp\label{vl}
Let $R=\bigoplus\limits_{\n\in\NN^s}R_{\n}$ be a standard Noetherian $\NN^s$-graded ring defined over a local ring $(R_{\underline 0},\mm)$ and $M=\bigoplus\limits_{\n\in\ZZ^s}M_{\n}$ a finitely generated $\ZZ^s$-graded $R$-module. Then
\ben
{
\item[(1)] For all $i\geq 0$ and $\n\in\ZZ^s$, $[H_{R_{++}}^i(M)]_{\n}$ is finitely generated 
$R_{\underline 0}$-module.
\item[(2)] For all large $\n$ and $i\geq 0,$ $[H_{R_{++}}^i(M)]_{\n}=0.$
}\een
\ep
\bpf  Note that $R_{++}$ is finitely generated. We prove both $(1)$ and $(2)$ together by induction on $i.$ Suppose $i=0.$ Note that $H_{R_{++}}^0(M)\subseteq M$ and hence $H_{R_{++}}^0(M)$ is finitely generated $R$-module. Let $\{\gamma_1,\dots,\gamma_q\}$ be a generating set of $H_{R_{++}}^0(M)$ as an $R$-module and $deg(\gamma_j)=p(j)=(p(j1),\dots,p(js))$ for all $j=1,\dots,q.$ Let $\alpha_i=\max\{|p(ji)|:j=1,\dots,q\}$ for all $i=1,\dots,s$ and $\alpha=(\alpha_1,\dots,\alpha_s).$ Since $H_{R_{++}}^0(M)$ is $R_{++}$-torsion, there exists an integer $t\geq 1$ such that $R_{++}^tH_{R_{++}}^0(M)=0.$ Then for all $\n\geq \alpha+te,$ $$[H_{R_{++}}^0(M)]_{\n}=R_{\n-p(1)}\gamma_1+\dots+R_{\n-p(q)}\gamma_q\subseteq  R_{++}^tH_{R_{++}}^0(M)=0.$$
\\Fix $\n\in\ZZ^s.$ Since $R$ is a standard Noetherian $\NN^s$-graded ring defined over $R_{\underline 0},$ there exist
elements $a_{i1},\ldots, a_{ik_i}\in R_{e_i}$ for all $\ii$ such that each nonzero element of $[H_{R_{++}}^0(M)]_{\n}$ can be written as sum of monomials $\prod\limits_{1\leq i\leq s} a_{i1}^{t_{i1}}\cdots a_{ik_i}^{t_{ik_i}}\gamma_j$ of degree $\n$ with coefficients from $R_{\underline 0}$ where $j=1,\dots,q,$ $t_{i1},\ldots,t_{ik_i}\geq 0.$ Since $0\leq t_{i1},\ldots,t_{ik_i}\leq n_i-p(ji),$ the number of monomial generators are finite. Hence   
$[H_{R_{++}}^0(M)]_{\n}$ is finitely generated $R_{\underline 0}$-module.
\\ Now assume $i>0.$ Let $M^\prime$ denote $\displaystyle{{M}/{H_{R_{++}}^0(M)}}.$ Consider the short exact sequence of $R$-modules
$$0\longrightarrow H_{R_{++}}^0(M)\longrightarrow M\longrightarrow M^\prime\longrightarrow 0$$ which gives long exact sequence of local cohomology modules
$$\cdots\longrightarrow H_{R_{++}}^i(H_{R_{++}}^0(M))\longrightarrow H_{R_{++}}^i(M)\longrightarrow H_{R_{++}}^i(M^\prime)\longrightarrow\cdots.$$ Since $H_{R_{++}}^0(M)$ is $R_{++}$-torsion, $H_{R_{++}}^i(H_{R_{++}}^0(M))=0$ for all $i\geq 1.$ Thus \beqnn\label{own} H_{R_{++}}^i(M)&\simeq& H_{R_{++}}^i(M^\prime)\mbox{ for all }i\geq 1. \eeqnn
By \cite[Lemma 2.4.1]{gc}, there exists an element $x\in R_{\p}$ for some $\p\geq e$ such that $x\notin P$ for all $P\in Ass(M^\prime)=Ass(M)\setminus V(R_{++}).$ Fix $i\geq 1.$ Consider the short exact sequence of $R$-modules $$0\longrightarrow M^\prime(-\p)\overset{.x}\longrightarrow M^\prime\longrightarrow M^\prime/xM^\prime\longrightarrow 0$$ which gives long exact sequence of local cohomology modules whose $\rrr$th component is {\beqn\displaystyle\cdots\longrightarrow \left[H_{R_{++}}^{i-1}\lf M^\prime/xM^\prime\rg\right]_{\rrr}\longrightarrow\left[H_{R_{++}}^i(M^\prime)\right]_{\rrr-\p}\overset{.x}\longrightarrow \left[H_{R_{++}}^i(M^\prime)\right]_{\rrr}\longrightarrow \left[H_{R_{++}}^i\lf  M^\prime/xM^\prime\rg\right]_{\rrr}\longrightarrow\cdots.\eeqn} By inductive hypothesis $\left[H_{R_{++}}^{i-1}\lf M^\prime/xM^\prime\rg\right]_{\m}=0$ for all large $\m,$ say, for all $\m\geq \underline k$ for some $\underline k\in \NN^s.$ Then for all $\n\geq \underline k,$ 
we have the exact sequence
$$0\longrightarrow\left[H_{R_{++}}^i(M^\prime)\right]_{\n-\p}\overset{.x}\longrightarrow\left[H_{R_{++}}^i(M^\prime)\right]_{\n}.$$ Since $H_{R_{++}}^i(M^\prime)$ is $R_{++}$-torsion and $x\in R_{++}$, we have $\left[H_{R_{++}}^i(M^\prime)\right]_{\m}=0$ for all $\m\geq \underline k-\p.$ Hence we prove part $(2).$
\\Fix $i>0$ and $\n\in\ZZ^s.$ By \cite[Lemma 2.4.1]{gc}, there exists an element $y\in R_{++}$ such that $y\notin P$ for all $P\in Ass(M^\prime)=Ass(M)\setminus V(R_{++})$ and we can assume $\degree(y)=\m$ such that $[H_{R_{++}}^i(M')]_{\rrr}=0$ for all $\rrr\geq\n+\m.$ Consider the short exact sequence of $R$-modules $$0\longrightarrow M'(-\m)\overset{.y}\longrightarrow M'\longrightarrow M'/yM'\longrightarrow 0$$ which gives long exact sequence of cohomology modules whose $\m+\n$-th component is \beqn\displaystyle\cdots\longrightarrow \left[H_{R_{++}}^{i-1}\lf M'/yM'\rg\right]_{\m+\n}\longrightarrow\left[H_{R_{++}}^i(M')\right]_{\n}\overset{.y}\longrightarrow \left[H_{R_{++}}^i(M')\right]_{\m+\n}\longrightarrow \cdots.\eeqn
Since $[H_{R_{++}}^i(M')]_{\m+\n}=0$ and by induction hypothesis $\left[H_{R_{++}}^{i-1}\lf M'/yM'\rg\right]_{\m+\n}$ is finitely generated $R_{\underline 0}$-module, from the above exact sequence, we get $[H_{R_{++}}^i(M')]_{\n}$ is finitely generated $R_{\underline 0}$-module. Hence by equation (\ref{own}), we get the required result.
\eepf
\begin{theorem}{\em{(Grothendieck-Serre formula for multi-graded modules)}}\label{GS} 
Let $R=\bigoplus\limits_{\n\in\NN^s}R_{\n}$ be a standard Noetherian $\NN^s$-graded ring defined over an Artinian local ring $(R_{\underline 0},\mm)$ and $M=\bigoplus\limits_{\n\in\ZZ^s}M_{\n}$ a finitely generated $\ZZ^s$-graded $R$-module. Let $H_M(\n)=\lm(M_{\n})$ and $P_M(x_1,\ldots,x_s)$ be the Hilbert polynomial of $M.$ Then for all $\n\in \ZZ^s,$ $$H_M(\n)-P_M(\n)=
\sum\limits_{j\geq 0}(-1)^j h_{R_{++}}^j(M)_{\n}.$$
\end{theorem}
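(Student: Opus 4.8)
The plan is to follow the classical single-graded strategy, with Proposition \ref{vl} supplying the crucial multigraded input. Write $\chi(M)_{\n}:=\sum_{j\geq 0}(-1)^j h_{R_{++}}^j(M)_{\n}$ for the right-hand side. Since $R_{\underline 0}$ is Artinian, Proposition \ref{vl}$(1)$ shows that each $[H_{R_{++}}^j(M)]_{\n}$ has finite length, and since local cohomology vanishes above the dimension, $\chi(M)_{\n}$ is a finite alternating sum of finite lengths; moreover Proposition \ref{vl}$(2)$ gives $\chi(M)_{\n}=0$ for all large $\n.$ As $H_M(\n)=P_M(\n)$ for all large $\n$ by the definition of $P_M,$ the \emph{defect} $\varphi_M(\n):=H_M(\n)-P_M(\n)-\chi(M)_{\n}$ already vanishes for all large $\n.$ The whole content of the theorem is to propagate this vanishing to every $\n\in\ZZ^s.$

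First I would record that both sides are additive on short exact sequences: length additivity handles $H_M,$ the Hilbert polynomial is additive, and the long exact sequence of local cohomology, together with the vanishing of the Euler characteristic of an exact sequence of finite-length modules, handles $\chi.$ Next, if $\supp_{++}(N)=\emptyset$ then $N$ is $R_{++}$-torsion, so $N=H_{R_{++}}^0(N),$ all higher local cohomology vanishes, and $P_N=0;$ hence both sides equal $\lm(N_{\n})$ and the formula holds trivially. Applying this to $L:=H_{R_{++}}^0(M)$ in the sequence $0\to L\to M\to M'\to 0$ with $M':=M/L,$ and using additivity, it suffices to prove the theorem for $M',$ which satisfies $H_{R_{++}}^0(M')=0$ and $\supp_{++}(M')=\supp_{++}(M).$

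I would then induct on $d:=\dim\supp_{++}(M').$ The case $d=-1$ is the torsion case just settled (where $M'=0$). For $d\geq 0,$ since $\supp_{++}(M')\neq\emptyset$ and $M'$ has no $R_{++}$-torsion, the prime-avoidance argument used in Proposition \ref{vl} (via \cite[Lemma 2.4.1]{gc}) produces $x\in R_{\p}$ with $\p\geq e$ avoiding every associated prime of $M',$ so $x$ is a nonzerodivisor on $M'.$ By Lemma \ref{reduce}, $\dim\supp_{++}(M'/xM')=d-1,$ so the induction hypothesis applies to $M'/xM'.$ From the short exact sequence $0\to M'(-\p)\overset{\cdot x}{\longrightarrow}M'\to M'/xM'\to 0$ I would extract three difference relations, each valid for every $\n\in\ZZ^s$:
\begin{align*}
H_{M'}(\n)-H_{M'}(\n-\p)&=H_{M'/xM'}(\n),\\
P_{M'}(\n)-P_{M'}(\n-\p)&=P_{M'/xM'}(\n),\\
\chi(M')_{\n}-\chi(M')_{\n-\p}&=\chi(M'/xM')_{\n},
\end{align*}
the first by length additivity, the second because these polynomials agree with the first relation for large $\n$ (hence identically, hence everywhere), and the third by taking the alternating sum of lengths along the long exact sequence of local cohomology attached to the displayed sequence.

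Subtracting the three relations gives $\varphi_{M'}(\n)-\varphi_{M'}(\n-\p)=\varphi_{M'/xM'}(\n)=0$ by the induction hypothesis, so $\varphi_{M'}$ is periodic of period $\p.$ Since $\p\geq e$ has all coordinates positive, for any fixed $\n$ the shifted argument $\n+k\p$ becomes large as $k\to\infty,$ where $\varphi_{M'}$ vanishes; periodicity then forces $\varphi_{M'}(\n)=0$ for every $\n\in\ZZ^s.$ By the additivity reduction this yields $\varphi_M\equiv 0,$ which is the desired formula. The main obstacle is precisely this Euler-characteristic bookkeeping along the long exact sequence together with the passage from ``large $\n$'' to ``all $\n$'': it is legitimate only because Proposition \ref{vl} guarantees finite lengths and eventual vanishing, and because the relevant nonzerodivisor can be chosen in a degree $\p\geq e.$
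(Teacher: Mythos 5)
Your proposal is correct and follows essentially the same route as the paper: reduce to $M'=M/H^0_{R_{++}}(M)$, induct on the ($++$-support) dimension, choose a nonzerodivisor $x\in R_{\p}$ with $\p\geq e$ avoiding the associated primes, and use the three difference relations from $0\to M'(-\p)\to M'\to M'/xM'\to 0$ together with eventual vanishing to propagate the identity to all $\n$. The only cosmetic difference is that the paper phrases the induction in terms of relevant dimension $\rel.\dim(M)$, which differs from your $\dim\supp_{++}(M)$ by the constant $s$.
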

\bpf
For all $\n\in \ZZ^s,$ we define $\chi_M(\n)=\sum\limits_{j\geq 0}(-1)^jh_{R_{++}}^j(M)_{\n}$ and $f_M(\n)=H_M(\n)-P_M(\n).$ We use induction on $\rel.\dim(M).$ Suppose $\rel.\dim(M)=s-1.$ Then $\supp_{++}(M)=V_{++}(\ann(M))=\emptyset.$ Therefore there exists an integer $k\geq 1$ such that ${R_{++}^k}M=0.$ Hence $H_{R_{++}}^0(M)=M$ and $H_{R_{++}}^i(M)=0$ for all $i\geq 1.$ Since $P_M(X_1,\ldots,X_s)$ has degree $-1,$ we have $P_M(\n)=0$ for all $\n\in \ZZ^s.$ Thus we get the required equality.
\\ Assume that $\rel.\dim(M)\geq s.$ Let $M^\prime$ denote $\displaystyle{{M}/{H_{R_{++}}^0(M)}}.$ Consider the short exact sequence of $R$-modules
$$0\longrightarrow H_{R_{++}}^0(M)\longrightarrow M\longrightarrow M^\prime\longrightarrow 0$$ which gives long exact sequence of local cohomology modules
$$\cdots\longrightarrow H_{R_{++}}^i(H_{R_{++}}^0(M))\longrightarrow H_{R_{++}}^i(M)\longrightarrow H_{R_{++}}^i(M^\prime)\longrightarrow\cdots.$$  Note that $H_{R_{++}}^0(M)$ is $R_{++}$-torsion. Hence for all $i\geq 1,$ $H_{R_{++}}^i(H_{R_{++}}^0(M))=0$ and \beqnn\label{equal} H_{R_{++}}^i(M)\simeq H_{R_{++}}^i(M^\prime).\eeqnn  Since $H_M(\n)=H_{M^\prime}(\n)+h_{R_{++}}^0(M)_{\n}$ and hence by Proposition \ref{vl} part $(2),$ $P_M(\n)=P_{M^\prime}(\n).$ Thus
$$H_M(\n)-P_M(\n)= H_{M^\prime}(\n)+h_{R_{++}}^0(M)_{\n}-P_{M^\prime}(\n)= H_{M^\prime}(\n)-P_{M^\prime}(\n)+h_{R_{++}}^0(M)_{\n}.$$ Therefore by the equation (\ref{equal}), it is enough to prove the result for $M^\prime.$
By \cite[Lemma 2.4.1]{gc}, there exists an element $x\in R_{\p}$ for some $\p\geq e$ such that $x\notin P$ for all $P\in Ass(M^\prime)=Ass(M)\setminus V(R_{++}).$ Consider the short exact sequence of $R$-modules $$0\longrightarrow M^\prime(-\p)\overset{.x}\longrightarrow M^\prime\longrightarrow M^\prime/xM^\prime\longrightarrow 0$$ which gives long exact sequence of cohomology modules whose $\rrr$th component is $$\cdots\longrightarrow \left[H_{R_{++}}^{i-1}\lf M^\prime/xM^\prime\rg\right]_{\rrr}\longrightarrow\left[H_{R_{++}}^i(M^\prime)\right]_{\rrr-\p}\overset{.x}\longrightarrow \left[H_{R_{++}}^i(M^\prime)\right]_{\rrr}\longrightarrow \left[H_{R_{++}}^i\lf  M^\prime/xM^\prime\rg\right]_{\rrr}\longrightarrow\cdots.$$ Thus for all $\n\in \ZZ^s,$ $H_{M^\prime/xM^\prime}(\n)=H_{M^\prime}(\n)-H_{M^\prime}(\n-\p).$ Hence $P_{M^\prime/xM^\prime}(\n)=P_{M^\prime}(\n)-P_{M^\prime}(\n-\p).$ By Lemma \ref{reduce}, $\rel.\dim(M^\prime/xM^\prime)<\rel.\dim(M^\prime).$ Therefore for all $\n\in \ZZ^s,$ $$
f_{M^\prime}(\n)-f_{M^\prime}(\n-\p)= f_{M^\prime/xM^\prime}(\n)= \chi_{M^\prime/xM^\prime}(\n)=\chi_{M^\prime}(\n)-\chi_{M^\prime}(\n-\p).$$ Hence $f_{M^\prime}(\n)-\chi_{M^\prime}(\n)=f_{M^\prime}(\n-\p)-\chi_{M^\prime}(\n-\p).$ Since for all large $\n,$ $f_{M^\prime}(\n)-\chi_{M^\prime}(\n)=0,$ we get the required result.
\eepf
\bp\label{result 2}
Let $S^\prime$ be a $\ZZ^s$-graded ring and 
$S=\bigoplus_{\n \in \NN^s} S^\prime_{\n}.$ 
Then 
$H^i_{S_{++}}(S^\prime) \cong H^i_{S_{++}}(S)$ for all $i >1 $ and we have the exact sequence 
$$0 \longrightarrow H^0_{S_{++}}(S) \longrightarrow H^0_{S_{++}}(S^\prime) \longrightarrow 
\frac{S^\prime}{S} \longrightarrow H^1_{S_{++}}(S) \longrightarrow H^1_{S_{++}}(S^\prime)
\longrightarrow 0 .$$

\ep
\begin{proof}
 Consider the short exact sequence of $S$-modules
 $$0 \longrightarrow S \longrightarrow S^\prime \longrightarrow \frac{S^\prime}{S} 
 \longrightarrow 0.$$
 This gives the long exact sequence of $S$-modules
 $$\cdots \longrightarrow H^i_{S_{++}}(S) \longrightarrow H^i_{S_{++}}(S^\prime) 
 \longrightarrow H^i_{S_{++}}\left(\frac{S^\prime}{S}\right) \longrightarrow \cdots .$$
 Since $\frac{S^\prime}{S}$ is $S_{++}$-torsion, $H^0_{S_{++}}\left(\frac{S^\prime}{S}\right)=
 \frac{S^\prime}{S}$ and $H^i_{S_{++}}\left(\frac{S^\prime}{S}\right)=0$ for all $i >0.$ 
 Hence the result follows.
\end{proof}

The GSF for multi-graded Rees algebras proved below  generalises the theorems \cite[Theorem 5.1]{jv}, \cite[Theorem 1]{mv}  and \cite[Theorem 4.1]{blancafort}.

\begin{theorem}\cite[Theorem 4.3]{msv}\label{r3}
Let $(R,\mm)$ be a Noetherian local ring of dimension $d$ and $\idl$ be $\mm$-primary ideals of $R.$ Let $\fil=\lbrace\fil(\n)\rbrace_{\n\in{\ZZ}^s}$ be an $\I$-admissible filtration of ideals in $R.$ Then
\ben
{\item[(1)] $h_{\R_{++}}^i(\mathcal{R}'(\fil))_{\n}<\infty$ for all $i\geq 0$ and $\n\in\ZZ^s.$ 
\item[(2)] $\po(\n)-\ho(\n)=\sum\limits_{i\geq 0}(-1)^i h_{\R_{++}}^i(\mathcal{R}'(\fil))_{\n}$ 
for all $\n\in\ZZ^s.$}\een
\end{theorem}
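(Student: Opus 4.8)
The plan is to deduce the result from the multi-graded Grothendieck-Serre formula (Theorem \ref{GS}), applied not to $\mathcal R'(\fil)$ itself---which is finite only over the $\ZZ^s$-graded ring $\mathcal R'(\I)$ and has infinite-length graded pieces---but to the associated multi-graded ring $\ga=\bigoplus_{\n}\fil(\n)/\fil(\n+e)$, which is a finite module over $G(\I)=\bigoplus_\n \I^\n/\I^{\n+e}$. The ring $G(\I)$ is standard $\NN^s$-graded and its degree-$\underline 0$ piece $R/\I^e=R/(I_1\cdots I_s)$ is Artinian, so Theorem \ref{GS} does apply to $\ga$. The bridge between $\mathcal R'(\fil)$ and $\ga$ is the element $u=t_1^{-1}\cdots t_s^{-1}$ of degree $-e$: it is a nonzerodivisor on $\mathcal R'(\fil)\subseteq R[\underline t,\underline t^{-1}]$ and carries $\fil(\n+e)$ into $\fil(\n)$ with cokernel $\fil(\n)/\fil(\n+e)$, giving the short exact sequence of $\mathcal R(\I)$-modules $0\to \mathcal R'(\fil)(e)\overset{u}{\longrightarrow}\mathcal R'(\fil)\to \ga\to 0$. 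Throughout I write $\mathfrak a=\mathcal R(\I)_{++}$; since $\mathcal R(\fil)$ is module-finite over $\mathcal R(\I)$, this ideal has the same radical as $\mathcal R(\fil)_{++}$, so the local cohomology in the statement is unchanged by this choice.

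First I would record the inputs. As $\ga$ is finite over $G(\I)$ over an Artinian base, Proposition \ref{vl} shows each $[H^i_{\mathfrak a}(\ga)]_\n$ has finite length and vanishes for $\n\gg 0$, and Theorem \ref{GS} gives $H_{\ga}(\n)-P_{\ga}(\n)=\sum_i(-1)^i h^i_{\mathfrak a}(\ga)_\n$. A length count yields $H_{\ga}(\n)=\ho(\n+e)-\ho(\n)$, hence $P_{\ga}(\n)=\po(\n+e)-\po(\n)$. Next I would prove the vanishing $[H^i_{\mathfrak a}(\mathcal R'(\fil))]_\n=0$ for $\n\gg 0$: Proposition \ref{vl} applied to the finite $\mathcal R(\I)$-module $\mathcal R(\fil)$ gives this for $\mathcal R(\fil)$, and Proposition \ref{result 2} (with $S'=\mathcal R'(\fil)$, $S=\mathcal R(\fil)$, and $\mathcal R'(\fil)/\mathcal R(\fil)$ concentrated in degrees $\n\not\ge\underline 0$) transfers it to $\mathcal R'(\fil)$.

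With these in place, part (1) follows from the long exact local cohomology sequence of the displayed short exact sequence: in degree $\n$ it gives $\lm[H^i_{\mathfrak a}(\mathcal R'(\fil))]_\n \le \lm[H^i_{\mathfrak a}(\ga)]_\n+\lm[H^i_{\mathfrak a}(\mathcal R'(\fil))]_{\n+e}$, and iterating along the $e$-direction telescopes against the vanishing above to give the finite bound $\lm[H^i_{\mathfrak a}(\mathcal R'(\fil))]_\n\le \sum_{k\ge 0}\lm[H^i_{\mathfrak a}(\ga)]_{\n+ke}<\infty$. For part (2) I set $f(\n)=\po(\n)-\ho(\n)$ and $\chi(\n)=\sum_i(-1)^i h^i_{\mathfrak a}(\mathcal R'(\fil))_\n$, now finite. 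The alternating sum of lengths along the (bounded, finite-length) long exact sequence gives $\chi(\n)-\chi(\n+e)=\sum_i(-1)^i h^i_{\mathfrak a}(\ga)_\n$, while the identities for $H_{\ga},P_{\ga}$ give $f(\n)-f(\n+e)=H_{\ga}(\n)-P_{\ga}(\n)$; by Theorem \ref{GS} the two right-hand sides agree. Thus $f-\chi$ is constant along the $e$-direction, and since both $f$ and $\chi$ vanish for $\n\gg 0$, we get $f\equiv\chi$, which is the asserted formula.

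I expect the main obstacle to be precisely the finite-length assertion of part (1): Proposition \ref{vl} only guarantees the graded pieces of the local cohomology are finitely generated over the non-Artinian base $R$, not of finite length, and $\mathcal R'(\fil)$ is not even finite over the standard ring $\mathcal R(\I)$. The device that resolves this is passing to $\ga$ through the nonzerodivisor $u$ and running the telescoping length estimate against the degreewise vanishing. Care is also needed to justify the Euler-characteristic computation over the long exact sequence, which presupposes both the boundedness of local cohomology and the finiteness from part (1).
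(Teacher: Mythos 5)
Your strategy is essentially the paper's: both arguments pass from $\mathcal{R}'(\fil)$ to an associated graded module via a degree-shifting short exact sequence, apply Proposition \ref{vl} and Theorem \ref{GS} to that module (which, unlike $\mathcal{R}'(\fil)$, is finite over a standard $\NN^s$-graded ring with Artinian degree-$\underline 0$ component), and then propagate both the finiteness and the difference formula back over all of $\ZZ^s$ by telescoping against the eventual vanishing of $[H^i_{\R_{++}}(\mathcal{R}'(\fil))]_{\n}$ supplied by Propositions \ref{vl} and \ref{result 2}. The only structural difference is that you use the single diagonal sequence $0\to\mathcal{R}'(\fil)(e)\overset{u}\longrightarrow\mathcal{R}'(\fil)\to Q\to 0$ with $u=t_1^{-1}\cdots t_s^{-1}$, whereas the paper uses the $s$ coordinate sequences with cokernels $G'_i(\fil)=\mathcal{R}'(\fil)/\mathcal{R}'(\fil)(e_i)$; either choice of direction reaches the region of large $\n$ from any $\n\in\ZZ^s$, so this is a matter of taste rather than substance.

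The one point you should repair is the identification of the cokernel with $\ga$. In degree $\n$ the cokernel $Q$ is $\fil(\n)/\fil(\n+e)$ for \emph{every} $\n\in\ZZ^s$, and since $\fil(\n)=\fil(\n^+)$ this is generally nonzero outside $\NN^s$ (for instance $\n=(-1,0,\ldots,0)$ gives $R$ modulo $\fil((0,1,\ldots,1))$), whereas $\ga$ is supported on $\NN^s$. So $Q\neq\ga$; rather $\ga=\bigoplus_{\n\in\NN^s}Q_{\n}$ is a graded submodule of $Q$ whose quotient $Q/\ga$ is $\R_{++}$-torsion with finite-length graded pieces --- this is exactly the discrepancy the paper confronts with $G'_i(\fil)$ versus $\ggi$ and Proposition \ref{result 2}. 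The repair is routine: the torsion quotient affects only $H^0$ and $H^1$, each $[H^i_{\R_{++}}(Q)]_{\n}$ still has finite length, and in the Euler characteristic one gets $\sum_i(-1)^ih^i_{\R_{++}}(Q)_{\n}=\sum_i(-1)^ih^i_{\R_{++}}(\ga)_{\n}+\lm\lf(Q/\ga)_{\n}\rg$, where the correction term equals $H_Q(\n)-H_{\ga}(\n)$ and hence cancels exactly so that the identity $\chi(\n)-\chi(\n+e)=f(\n)-f(\n+e)$ persists for $\n\notin\NN^s$. With that bookkeeping made explicit, your argument is complete and matches the paper's.
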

\bpf
 $(1)$ Denote $\displaystyle\frac{\mathcal{R}'(\fil)}{\mathcal{R}'(\fil)(e_i)}$ by $G'_i(\fil).$ By the change of ring principle, $H_{{G_i(\I)}_{++}}^j(G'_i(\fil))\cong H_{{\R}_{++}}^j(G'_i(\fil))$ for all $\ii$ and $j\geq 0.$ For a fixed $i,$ consider the short exact sequence of $\R(\I)$-modules 
 \begin{equation}\label{equation}0\longrightarrow \mathcal{R}'(\fil)(e_i)\longrightarrow \mathcal{R}'(\fil)\longrightarrow G'_i(\fil)\longrightarrow 0.
 \end{equation}
 This induces the long exact sequence of $R$-modules
$$0\longrightarrow [H_{{\R}_{++}}^0(\mathcal{R}'(\fil))]_{\n+e_i} \longrightarrow [H_{{\R}_{++}}^0(\mathcal{R}'(\fil))]_{\n} 
\longrightarrow [H_{{\R}_{++}}^0(G'_i(\fil))]_{\n}
 \longrightarrow [H_{{\R}_{++}}^1(\mathcal{R}'(\fil))]_{\n+e_i} \longrightarrow \cdots.$$ 
  By Propositions \ref{vl} and \ref{result 2}, $[H_{{\R}_{++}}^j(\mathcal{R}'(\fil))]_{\n}=0$ for all 
  large $\n$ and $j\geq 0.$ Since $\lf\frac{G'_i(\fil)}{\ggi}\rg_{\n}=0$ for all $\n \in \NN^s$ or  $n_i<0,$ by Propositions \ref{vl} and \ref{result 2}, 
  $[H_{{\R}_{++}}^j(G'_i(\fil))]_{\n}$ is finitely generated $(G_i(\I))_{\underline{0}}$-module 
 for all $\n \in \NN^s$ or  $n_i<0$ and $j\geq 0.$ Since $(G_i(\I))_{\underline{0}}$ is Artinian, 
  $[H_{{\R}_{++}}^j(G'_i(\fil))]_{\n}$ has finite length for all $\n \in \NN^s$ or  $n_i<0$ and $j\geq 0.$ Hence using decreasing induction on $\n,$ we get that 
  $h_{\R_{++}}^j(\mathcal{R}'(\fil))_{\n}<\infty$ for all $j\geq 0$ and $\n\in\ZZ^s.$ 
\\ $(2)$ Let $\chi_M(\n)=\sum\limits_{i\geq 0}(-1)^ih_{{\R}_{++}}^i(M)_{\n}$ where 
$M$ is an $\mathcal{R}(\I)$-module. Then from the short exact sequence (\ref{equation}), Theorem \ref{GS} and Proposition \ref{result 2}, for each $\ii$ and $\n\in\NN^s$ or $n_i<0,$
 \beqn
 \chi_{\mathcal{R}'(\fil)}(\n+e_i)-\chi_{\mathcal{R}'(\fil)}(\n)&=& -\chi_{G'_i(\fil)}(\n)\\&=&  -\chi_{G_i(\fil)}(\n)\\&=& P_{G_i(\fil)}(\n)-H_{G_i(\fil)}(\n)\\&=&(\po(\n+e_i)-\po(\n))-(\ho(\n+e_i)-\ho(\n)).
 \eeqn
Let $h(\n)=\chi_{\mathcal{R}'(\fil)}(\n)-(\po(\n)-\ho(\n)).$ Then $h(\n+e_i)=h(\n)$ for all $\n\in\NN^s$ or $n_i<0$ and $\ii.$ Since $h(\n)=0$ for all large $\n,$ $h(\n)=0$ $\fa$ $\n\in\ZZ^s.$
\eepf

\section{Formulas for local cohomology modules}

In this section we derive formulas for the graded components of
the local cohomology modules of certain Rees rings and associated graded rings in terms of the Ratliff-Rush closure filtration of a multi-graded filtration of ideals. These generalise \cite[Proposition 2.5 and Theorem 3.5]{blancafort}. We use these formulas  to derive various  properties of the Hilbert coefficients in further sections. 

In the following proposition we derive a formula for $H_{G(\fil)_+}^d(G(\fil))_n.$ 

\bp\label{highest local cohomology}
Let $(R,\mm)$ be a \CM local ring of dimension $d\geq 1,$ $I$ an $\mm$-primary ideal and $\fil=\{I_n\}$ be an $I$-admissible filtration of ideals in $R.$ Let $(x_1,\ldots, x_d)$ be a minimal reduction of $\fil.$ Put $\underline x^k=(x_1^k,\ldots, x_d^k)$ for all $k\geq 1.$ Then for all  $n\in\ZZ,$
 $$H_{G(\fil)_+}^d(G(\fil))_n=\displaystyle\lim_{\stackrel{\longrightarrow}{k}}\frac{I_{dk+n}}{\underline x^k I_{(d-1)k+n}+I_{dk+n+1}}.$$
\ep
\bpf
Let $x_i^*=x_i+I_2$ be the image of $x_i$ in $G(\fil).$ Since $\sqrt{G(\fil)_+}=\sqrt{(x_1^*,\ldots,x_d^*)},$ by \cite[Theorem 5.2.9]{BS}, $H_{G(\fil)_+}^d(G(\fil))=\displaystyle\lim_{\stackrel{\longrightarrow}{k}} H^d((x_1^*)^k,\ldots,(x_d^*)^k,G(\fil))$ where $H^d((x_1^*)^k,\ldots,(x_d^*)^k,G(\fil))$ is the $d$-th cohomology of the Koszul complex of $G(\fil)$ \wrt the elements $(x_1^*)^k,\ldots,(x_d^*)^k.$ Thus we get the required result.
\eepf
\bp\label{hlcr}
Let $(R,\mm)$ be a \CM local ring of dimension $d\geq 1,$ $I$ an $\mm$-primary ideal and $\fil=\{I_n\}$ be an $I$-admissible filtration of ideals in $R.$ Let $(x_1,\ldots, x_d)$ be a minimal reduction of $\fil.$ Put $\underline x^k=(x_1^k,\ldots, x_d^k)$ for all $k\geq 1.$ Then for all  $ n\in\ZZ, $
$$H_{\R(\fil)_+}^d(\R(\fil))_n=\displaystyle\lim_{\stackrel{\longrightarrow}{k}}\frac{I_{dk+n}}{\underline x^k I_{(d-1)k+n}}
    .$$
\ep
\bpf
Since $\sqrt{\R(\fil)_+}=\sqrt{(x_1t,\ldots,x_dt)},$ we have  $H_{\R(\fil)_+}^d(\R(\fil))=\displaystyle\lim_{\stackrel{\longrightarrow}{k}} H^d((x_1t)^k,\ldots,(x_dt)^k,\R(\fil))$ by \cite[Theorem 5.2.9]{BS} where $H^d((x_1t)^k,\ldots,(x_dt)^k,\R(\fil))$ is the $d$-th cohomology of the Koszul complex of $\R(\fil)$ \wrt the elements $(x_1t)^k,\ldots,(x_dt)^k.$ Thus we get the required result.
\eepf
\begin{lemma}{\em[Rees' Lemma]}\cite[Lemma 1.2]{rees3}\cite[Lemma 2.2]{msv} 
\label{one}
 Let $(R,\mm,k)$ be a Noetherian local ring of dimension $d$ with infinite residue field $k$ and 
 $\idl$ be $\mm$-primary ideals of $R.$ Let $\fil=\lbrace\fil(\n)\rbrace_{\n\in{\ZZ}^s}$ be an 
 $\I$-admissible filtration of ideals in $R$ and $S$ be a finite set of prime ideals of $R$ not 
 containing $I_1\cdots I_s.$ Then for each $\ii,$ there exists an element $x_i\in I_i$ not contained 
 in any of the prime ideals of $S$ and an integer $r_i$ such that for all $\n \geq r_ie_i,$ 
 $$\fil(\n)\cap (x_i)=x_i\fil(\n-e_i).$$ 
\el
\bt\cite[Theorem 1.3]{rees3}\cite[Theorem 2.3]{msv}\label{eleven}
Let $(R,\mm)$ be a Noetherian local ring of dimension $d$ with infinite residue field and $\idl$ be  $\mm$-primary ideals of $R.$ Let $\fil=\lbrace\fil(\n)\rbrace_{\n\in \ZZ^s}$ be an $\I$-admissible filtration of ideals in $R.$ Then there exist a set of elements ${\lbrace}{x_{ij}\in{I_i}: j=1,\ldots,d;\ii}\rbrace$ such that $(y_1,\ldots,y_d)\fil(\n)=\fil(\n+e)$ for all large $\n$ where $y_j=x_{1j}\cdots x_{sj}\in\id$ for all $j=1,\ldots,d.$
Moreover, if the ring is Cohen-Macaulay local then there exist elements $x_{i1}\in{I_i}$ and integers $r_i$ for all $\ii$ such that for all $\n \geq r_ie_i,$ 
 $\fil(\n)\cap (x_{i1})=x_{i1}\fil(\n-e_i)$ and $y_1=x_{11}\cdots x_{s1}.$ 

\et
\bp\cite[Proposition 3.5]{msv}\label{zero}
Let $(R,\mm)$ be a Cohen-Macaulay local ring of dimension two with infinite residue field and $\idl$ be $\mm$-primary ideals in $R.$ Let  
$\fil=\lbrace\fil(\n)\rbrace_{\n\in{\ZZ}^s}$ be an $\I$-admissible filtration of ideals in $R.$  
Then for all $\n\in\NN^s,$ 
$$[H_{\R(\fil)_{++}}^1(\R(\fil))]_{\n}\cong\frac{\breve\fil(\n)}{\fil(\n)}.$$
\bpf
By Lemma \ref{one} and Theorem \ref{eleven}, there exists a regular sequence $\lbrace y_1,y_2\rbrace$ such that $(y_1,y_2)\fil(\n)=\fil(\n+e)$ for all large $\n.$ For all $k\geq 1,$ consider the following complex of $\R(\fil)$-modules
$$F^{k.} : 0\longrightarrow \R(\fil)\overset{\alpha_k}\longrightarrow \R(\fil)(ke)^2\overset{\beta_k}
\longrightarrow \R(\fil)(2ke)\longrightarrow 0,$$ where 
$\alpha_k(1)=({y_1}^k{\underline{t}}^{ke},{y_2}^k{\underline{t}}^{ke})$
 and $\beta_k(u,v)={y_2}^k{\underline{t}}^{ke}u-{y_1}^k{\underline{t}}^{ke}v.$ Since radical of the ideal $(y_1\underline t^e,y_2\underline t^e)\R(\fil)$ is same as radical of the ideal $\R(\fil)_{++},$ by \cite[Theorem 5.2.9]{BS},
\beqn \label{second local cohomology}
[H^1_{\R(\fil)_{++}}(\mathcal{R}(\fil))]_{\n} \cong \displaystyle\lim_{\stackrel{\longrightarrow}{k}}
\frac{(\ker{\beta_k)_{\n}}}{(\im{\alpha_k})_{\n}}.
\eeqn Suppose $(u,v)\in (\ker\beta_k)_{\n}$ for any $\n\in\NN^s.$ Then 
 ${y_2}^ku-{y_1}^kv=0.$ Since $\lbrace y_1,y_2\rbrace$ is a regular sequence, 
 $u={y_1}^kp$ for some $p\in R.$ Thus $v={y_2}^kp.$ Hence $(u,v)=({y_1}^kp,{y_2}^kp).$ This implies for all $\n\in\NN^s,$
$(u,v)\in (\ker\beta_k)_{\n}$ if and only if $(u,v)=({y_1}^kp,{y_2}^kp)$ for 
 some $p\in (\fil(\n+ke):({y_1}^k,{y_2}^k)).$ For $k\gg0,$ by \cite[Proposition 3.1]{msv}, 
 $\breve\fil(\n)=(\fil(\n+ke):({y_1}^k,{y_2}^k))$ for all $\n\in\NN^s.$ Hence for all $\n\in\NN^s$ and $k\gg 0,$ $(\ker\beta_k)_{\n}\cong\breve\fil(\n).$ 
 Also for all $\n\in\NN^s,$ $$(\im\alpha_k)_{\n}=\lbrace ({y_1}^kp{\underline{t}}^{ke},{y_2}^kp{\underline{t}}^{ke}):p\in\R(\fil)_{\n}\rbrace\cong\fil(\n).$$ 
 Hence $\displaystyle [H_{\R(\fil)_{++}}^1(\R(\fil))]_{\n}\cong\frac{\breve\fil(\n)}{\fil(\n)}$ for all $\n\in\NN^s.$ 
 \eepf 
\ep
\bp\label{h222}
Let $(R,\mm)$ be a Cohen-Macaulay local ring of dimension two with infinite residue field, $I$ an $\mm$-primary ideal and $\fil=\lbrace I_n\rbrace_{n\in{\ZZ}}$ be an $I$-admissible filtration of ideals in $R.$ 
Then \[ [H_{\R(\fil)_{+}}^1(\R(\fil))]_{n} = \left\{
  \begin{array}{l l}
   \breve{I_n}/I_n  & \quad \text{if $n\geq 0$ }\\
    R & \quad \text{if $n<0.$ }
  \end{array} \right. \] 
\ep
\bpf
By Proposition \ref{zero}, we get $[H_{\R(\fil)_{+}}^1(\R(\fil))]_{n} = \breve{I_n}/I_n$ for all $n\geq 0.$ Let $J$ be minimal reduction of $\fil$ generated by superficial sequence $y_1,y_2.$
For all $k\geq 1,$ consider the following complex of $\R(\fil)$-modules
$$F^{k.} : 0\longrightarrow \R(\fil)\overset{\alpha_k}\longrightarrow \R(\fil)(k)^2\overset{\beta_k}
\longrightarrow \R(\fil)(2k)\longrightarrow 0,$$ where 
$\alpha_k(1)=({y_1}^k{{t}}^{k},{y_2}^k{{t}}^{k})$
 and $\beta_k(u,v)={y_2}^k{{t}}^{k}u-{y_1}^k{{t}}^{k}v.$ Since radical of the ideal $(y_1t,y_2t)\R(\fil)$ is same as radical of the ideal $\R(\fil)_{+},$ by \cite[Theorem 5.2.9]{BS},
\beqn \label{second local cohomology}
[H^1_{\R(\fil)_{+}}(\mathcal{R}(\fil))]_{n} \cong \displaystyle\lim_{\stackrel{\longrightarrow}{k}}
\frac{(\ker{\beta_k)_{n}}}{(\im{\alpha_k})_{n}}.\eeqn Now for $n<0,$ $\R(\fil)_n=0.$ Hence $(\im{\alpha_k})_{n}=0.$
\\Suppose $(u,v)\in (\ker\beta_k)_{n}$ for any $n<0.$ Then 
 ${y_2}^ku-{y_1}^kv=0.$ Since $\lbrace y_1,y_2\rbrace$ is a regular sequence, 
 $u={y_1}^kp$ for some $p\in R.$ Thus $v={y_2}^kp.$ Hence $(u,v)=({y_1}^kp,{y_2}^kp).$ This implies for all $n<0,$
$(u,v)\in (\ker\beta_k)_{n}$ if and only if $(u,v)=({y_1}^kp,{y_2}^kp)$ for 
 some $p\in (\fil(n+k):({y_1}^k,{y_2}^k))=R.$
\eepf

\bp\label{h1}\cite[Theorem 3.5]{blancafort}
Let $(R,\mm)$ be a Cohen-Macaulay local ring of dimension two with infinite residue field, $I$ an $\mm$-primary ideal and $\fil=\lbrace I_n\rbrace_{n\in{\ZZ}}$ be an $I$-admissible filtration of ideals in $R.$ 
Then \[ [H_{\R(\fil)_{+}}^1(\R^\prime(\fil))]_{n} = \left\{
  \begin{array}{l l}
   \breve{I_n}/I_n  & \quad \text{if $n\geq 0$ }\\
    0 & \quad \text{if $n<0.$ }
  \end{array} \right. \] 
\ep
\bpf
Since $ [H_{\R(\fil)_{++}}^1(\R^\prime(\fil))]_{n}= [H_{\R(\fil)_{++}}^1(\R(\fil))]_{n}$ for all $n\in\NN$ by Proposition \ref{result 2}, using Proposition \ref{h222}, we get $[H_{\R(\fil)_{++}}^1(\R^\prime(\fil))]_{n}=\breve{I_n}/I_n.$
\\Let $J$ be minimal reduction of $\fil$ generated by superficial sequence $y_1,y_2.$
For all $k\geq 1,$ consider the following complex of $\R(\fil)$-modules
$$F^{k.} : 0\longrightarrow \R^\prime(\fil)\overset{\alpha_k}\longrightarrow \R^\prime(\fil)(k)^2\overset{\beta_k}
\longrightarrow \R^\prime(\fil)(2k)\longrightarrow 0,$$ where 
$\alpha_k(1)=({y_1}^k{{t}}^{k},{y_2}^k{{t}}^{k})$
 and $\beta_k(u,v)={y_2}^k{{t}}^{k}u-{y_1}^k{{t}}^{k}v.$ Since radical of the ideal $(y_1\underline t,y_2\underline t)\R(\fil)$ is same as radical of the ideal $\R(\fil)_{+},$ by \cite[Theorem 5.2.9]{BS},
\beqn \label{second local cohomology}
[H^1_{\R(\fil)_{+}}(\mathcal{R}^\prime(\fil))]_{n} \cong \displaystyle\lim_{\stackrel{\longrightarrow}{k}}
\frac{(\ker{\beta_k)_{n}}}{(\im{\alpha_k})_{n}}.\eeqn
for all $n\in\ZZ\setminus\NN,$ $$(\im\alpha_k)_{n}=\lbrace ({y_1}^kp{\underline{t}}^{ke},{y_2}^kp{\underline{t}}^{ke}):p\in\R^\prime(\fil)_{n}=R\rbrace\cong R.$$ Thus $[H_{\R(\fil)_{+}}^1(\R^\prime(\fil))]_{n}=0$ for all $n\in\ZZ\setminus\NN.$
\eepf

\bl\cite[Lemma 2.11]{msv}\label{result 4}  
Let $\idl$ be $\mm$-primary ideals in a Noetherian local ring $(R,\mm)$ of dimension $d\geq 1$ such that grade$(\id)\geq 1.$ 
Let $\fil=\lbrace\fil(\n)\rbrace_{\n\in \ZZ^s}$ be an $\I$-admissible filtration of ideals in $R.$ Denote $\R(\I)_{++}$ as $\R_{++}.$ Then 
$$\lm_R[H_{\R_{++}}^d(\R'(\fil))]_{\n}\leq\lm_R[H_{\R_{++}}^d(\R'(\fil))]_{\n-e_i}$$ 
for all $\n\in\ZZ^s$ and  $\ii.$
\bpf
By Lemma \ref{one} and Theorem \ref{eleven}, there exists an ideal $J=(\yy)\subseteq \id$ such that $y_1=x_{11}\cdots x_{s1}$ is a
nonzerodivisor, $x_{i1}\in I_i$ $\fa$ $\ii$ and $J\fil(\n)=\fil(\n+e)$ for all large $\n.$ Hence 
$\sqrt{\R(\I)_{++}}=\sqrt{(y_1\underline t,\ldots,y_d\underline t)}.$ Consider the short exact sequence of $\R(\I)$-modules,
$$0\longrightarrow \R'(\fil)(-e_i)\overset{x_{i1}t_i}\longrightarrow\R'(\fil)\longrightarrow 
\frac{\R'(\fil)}{x_{i1}t_i{\R'(\fil)}}\longrightarrow 0.
$$ This gives a long exact sequence of $\n$-graded components of local cohomology modules, \beqn 
 \cdots \longrightarrow [H_{\R(\I)_{++}}^d(\R'(\fil))]_{\n-e_i}\longrightarrow 
 [H_{\R(\I)_{++}}^d(\R'(\fil))]_{\n}\longrightarrow \left[H_{\R(\I)_{++}}^d\left(\frac{\R'(\fil)}{x_{i1}t_i{\R'(\fil)}}\right)\right]_{\n}
 \longrightarrow 0.
 \eeqn
Let $T=\frac{\R(\I)}{x_{i1}t_i\R(\I)}.$ Now $\frac{\R'(\fil)}{x_{i1}t_i{\R'(\fil)}}$ is a $T$-module and $\sqrt{\left(\frac{\R(\I)}{x_{i1}t_i\R(\I)}\right)_{++}}=\sqrt{({y_2}\underline t,\cdots,{y_d}\underline t)T}.$ Hence $H_{\R(\I)_{++}}^d\lf\frac{\R'(\fil)}{x_{i1}t_i{\R'(\fil)}}\rg=0$ which implies the required result.
\eepf
\el
\begin{theorem}\cite[Theorem 3.3]{msv}\label{two}
Let $(R,\mm)$ be a Noetherian local ring of dimension $d\geq 1$ with infinite residue field and $\idl$ be $\mm$-primary ideals in $R$ such that grade$(\id)\geq 1.$ Let $\fil=\lbrace\fil(\n)\rbrace_{\n\in \ZZ^s}$ be an $\I$-
admissible filtration of ideals in $R.$ Then for all $\n\in \NN^s$ and $\ii,$ 
$$[H^0_{G_i(\mathcal F)_{++}}(G_i(\mathcal F))]_{\n}=\frac{\breve\fil(\n+e_i)\cap\fil(\n)}{\fil(\n+e_i)}.
$$ \et
\bpf
Let $x\in\fil(\n)$ and $x^{*}=x+\fil(\n+{e_i})\in [H^0_{G_i(\mathcal F)_{++}}(G_i(\mathcal F))]_{\n}.$ 
Then $x^{*}\gi^{k}=0$ for some $k\geq 1.$ Therefore $x\fil({ e})^k\subseteq\fil(\n+k{ e}+{ e_i}).$ 
Hence $x\in\breve\fil(\n+{ e_i}).$ 
\\ Conversely, suppose $x^*\in {\breve\fil(\n+{ e_i})\cap\fil(\n)}/{\fil(\n+{e_i})}.$ We show that there exists $m\gg 0$ such that $x^*{\gi^{m}}=0.$ Since $\displaystyle\gi^{m}\subseteq\bigoplus\limits_{{\underline p}\geq m{ e}}\fil({\underline p})/\fil({\underline p+e_i}),$ it is enough to show that $x^*(\fil({\underline p})/\fil({\underline p+e_i}))=0$ for all large ${\underline p}.$ By \cite[Proposition 3.1]{msv}, there exists $\m\in\NN^s$ with $\m\geq{ e}$ such that $\breve\fil(\rrr)=\fil(\rrr)$ for all $\rrr\geq \m.$ Thus for all $\rrr\geq \m,$ $$x\fil(\rrr)\subseteq \breve\fil(\n+{e_i})\fil(\rrr)\subseteq\breve\fil(\n+\rrr+{e_i})=\fil(\n+\rrr+{e_i}).$$
Therefore $(x+\fil(\n+{e_i}))G_i(\fil)_{++}^m=0$ 
for some $m \geq 1.$ Hence $(x+\fil(\n+{e_i}))\in[H^0_{G_i(\mathcal F)_{++}}(G_i(\mathcal F))]_{\n}.$
\eepf
\section{The postulation number and the reduction number}
In \cite[Proposition 3]{sa} Sally gave a nice relation between the postulation number and the reduction number of the filtration $\{\mm^n\}_{n \in \NN}.$ In \cite[Corollary 3.8]{marleythesis} Marley generalised this  relation for any $I$-admissible filtration. In this section we derive these results using the Grothendieck-Serre formula. We recall few preliminary results about superficial sequences which are useful to apply induction in the study of Hilbert coefficients.  
\\Let $(R,\mm)$ be a Noetherian local ring, $I$ be an $\mm$-primary ideal and $\fil=\{I_n\}_{n\in\ZZ}$ be an $I$-admissible filtration of ideals in $R.$

 \bd An element $x\in I_t\setminus I_{t+1}$ is called {\bf{superficial element for $\fil$ of degree $t$}} if there exists an integer $c\geq 0$ such that $(I_{n+t}:x)\cap I_c=I_n\mbox{ for all }n\geq c.$\ed
 If the residue field of $R$ is infinite then there exists a superficial element of degree $1$ \cite[Proposition 2.3]{rho}. If $\grade(I_1)\geq 1$ and $x\in I_1$ is superficial for $\fil,$ Huckaba and Marley \cite{huckaba-marley}, showed that $x$ is nonzerodivisor in $R$ and $(I_{n+1}:x)=I_n$ for all large $n.$  If dimension of $R$ is $d\geq 1,$ $x\in I_1\setminus I_2$ is superficial element for $\fil$ and $x$ is a nonzerodivisor on $R$ then by \cite[Lemma A.2.1]{marleythesis}, $e_i(\fil)=e_i(\fil^\prime)$ for all $0\leq i<d$ where $R^\prime=R/(x)$ and $\fil^\prime=\{I_nR^\prime\}_{n\in\ZZ}.$ The following lemma is due to Blancafort \cite[Lemma 3.1.6]{blancafort thesis}. This lemma was first proved by Huckaba \cite[Lemma 1.1]{huc} for $I$-adic filtration.
\begin{lemma}\label{redn1}
Let $(R,\mm)$ be a \CM local ring of dimension $d\geq 1,$ $I$ an $\mm$-primary ideal and $\fil=\{I_n\}_{n\in \ZZ}$ be an $I$-admissible filtration of ideals in $R. $ Suppose $J$ is a minimal reduction of $\fil$ and there exists an $x\in J\setminus I_2$ such that $x^*=x+I_2$ is a nonzerodivisor in $G(\fil).$ Let $R^\prime=R/(x).$ Then $r(\fil)=r(\fil^\prime)$ where $\fil^\prime=\{I_n R^\prime\}_{n\in\ZZ}.$
\end{lemma}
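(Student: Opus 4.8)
The plan is to reduce everything to the single ideal-theoretic relation $(I_{n+1}:x)=I_n$ and then transfer reductions between $R$ and $R^\prime=R/(x)$. The main algebraic input is to extract this relation from the hypothesis that $x^*=x+I_2$ is a nonzerodivisor in $G(\fil)$. The facts recalled just before the lemma give $(I_{n+1}:x)=I_n$ only for large $n$, and the role of the nonzerodivisor hypothesis is precisely to upgrade it to all $n\ge 0$: if $xa\in I_{n+1}$ but $a\notin I_n$, pick the largest $m$ with $a\in I_m$; necessarily $m<n$, and then $a^*=a+I_{m+1}\neq 0$ while $x^*a^*=0$ in $G(\fil)$ because $xa\in I_{n+1}\subseteq I_{m+2}$, contradicting that $x^*$ is a nonzerodivisor. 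Hence $(I_{n+1}:x)=I_n$ for all $n$, equivalently $I_n\cap(x)=xI_{n-1}$ for all $n\ge 1$. Since $R$ is \CM and $x$ belongs to the parameter ideal $J$, the element $x$ is a nonzerodivisor, so $R^\prime$ is \CM of dimension $d-1$ and $\fil^\prime=\{I_nR^\prime\}$ is an admissible filtration in $R^\prime$.

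Next I would prove the basic transfer statement: for every minimal reduction $J$ of $\fil$ with $x\in J$, writing $J^\prime=JR^\prime$, one has $JI_n=I_{n+1}$ if and only if $J^\prime(I_nR^\prime)=I_{n+1}R^\prime$. The forward direction is immediate on reducing modulo $(x)$. For the converse, $J^\prime(I_nR^\prime)=I_{n+1}R^\prime$ says $I_{n+1}\subseteq JI_n+(x)$; writing $z=w+xr$ with $z\in I_{n+1}$ and $w\in JI_n$ forces $xr\in I_{n+1}$, so $r\in(I_{n+1}:x)=I_n$ by the previous step, whence $xr\in xI_n\subseteq JI_n$ (this is where $x\in J$ is used) and $z\in JI_n$. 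This equivalence yields at once $r_J(\fil)=r_{J^\prime}(\fil^\prime)$ for every minimal reduction $J\ni x$.

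Then I would set up a correspondence between minimal reductions of $\fil$ containing $x$ and minimal reductions of $\fil^\prime$. Projection $J\mapsto JR^\prime$ sends one family to the other, and conversely any minimal reduction $K=(y_2^\prime,\dots,y_d^\prime)$ of $\fil^\prime$ lifts: choosing preimages $y_i\in I_1$ and using that $R$ is \CM with infinite residue field, $\widetilde J=(x,y_2,\dots,y_d)$ is a parameter ideal, and the converse direction of the transfer statement shows $\widetilde J$ is a reduction of $\fil$ with $\widetilde J R^\prime=K$. Under this correspondence reduction numbers match, so $r(\fil^\prime)=\min\{r_J(\fil):x\in J\}$. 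Taking $K$ to realize $r(\fil^\prime)$ and lifting it gives $r(\fil)\le r_{\widetilde J}(\fil)=r(\fil^\prime)$ immediately.

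The remaining and main obstacle is the reverse inequality $r(\fil^\prime)\le r(\fil)$, i.e. showing that the overall minimum $r(\fil)$ is already attained by some minimal reduction containing the fixed element $x$. A minimizing reduction $J_0$ need not contain $x$, and one cannot simply pass to $J_0R^\prime$, since it need not be a minimal reduction of $\fil^\prime$ and shrinking it only increases the reduction number. To overcome this I would use the infinite residue field together with the isomorphism $G(\fil^\prime)\cong G(\fil)/(x^*)$, which follows from $I_n\cap(x)=xI_{n-1}$. Interpreting $r_J(\fil)$ as the top degree in which $G(\fil)/J^*G(\fil)$ is nonzero (for $J$ a minimal reduction whose initial forms $J^*$ are a system of parameters of $G(\fil)$), the number $r(\fil)$ becomes a purely $G(\fil)$-theoretic quantity; parameter ideals of $G(\fil^\prime)=G(\fil)/(x^*)$ together with $x^*$ produce parameter ideals of $G(\fil)$ with the same quotient, and the infinite residue field guarantees such parameter ideals are realized by genuine minimal reductions containing $x$. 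This identifies the two minima and gives $r(\fil)=r(\fil^\prime)$. I expect this last step, matching the two minima through the degree-one nonzerodivisor $x^*$, to be the delicate part of the argument.
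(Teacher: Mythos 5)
Your steps (1)--(3) are correct and already contain the entire computational content of the paper's proof: the upgrade of $(I_{n+1}:x)=I_n$ from ``large $n$'' to all $n$ using that $x^*$ is a nonzerodivisor, and the two-way transfer $JI_n=I_{n+1}\Leftrightarrow J'I_n'=I_{n+1}'$ for a reduction $J$ containing $x$ (the paper writes $a=p+xq$ with $p\in JI_n$, concludes $q\in(I_{n+1}:x)=I_n$ and hence $xq\in xI_n\subseteq JI_n$, exactly as you do). This gives $r_J(\fil)=r_{JR'}(\fil')$ for every minimal reduction $J$ containing $x$, and your lifting of a minimizing reduction of $\fil'$ to one of $\fil$ containing $x$ correctly yields $r(\fil)\le r(\fil')$.

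The gap is in your final step, and it is real. The correspondence you sketch --- parameter ideals of $G(\fil')\cong G(\fil)/(x^*)$ together with $x^*$ give parameter ideals of $G(\fil)$, with matching quotients --- only identifies $r(\fil')$ with $\min\{r_J(\fil):\ J\ \text{a minimal reduction of}\ \fil\ \text{with}\ x\in J\}$, which is a restatement of the inequality $r(\fil)\le r(\fil')$ you already have. To obtain $r(\fil')\le r(\fil)$ you must show that the minimum over \emph{all} minimal reductions of $\fil$ is attained by one containing $x$ (equivalently, by a degree-one parameter ideal of $G(\fil)$ containing $x^*$), and nothing in your sketch produces such a reduction from an arbitrary minimizing $J_0$: as you yourself observe, passing to $J_0R'$ and shrinking only increases reduction numbers, and the hypothesis supplies just one degree-one nonzerodivisor in $G(\fil)$, which is far from the condition $\grade G(\fil)_+\ge d-1$ under which the paper later proves (via $r_J(\fil)=n(\fil)+d$) that $r_J(\fil)$ is independent of $J$. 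So the step you flag as ``the delicate part'' is genuinely unresolved in your write-up. For what it is worth, the paper's own proof is no more forthcoming here: it declares ``$s\le r$ is clear'' and then runs your transfer computation for $n\ge s$, tacitly assuming that the given $J'$ realizes $r(\fil')$; and in the places where the lemma is actually applied, only the fixed-reduction identity $r_J(\fil)=r_{JR'}(\fil')$ is used --- that part of your proposal is complete and correct.
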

\bpf
We denote $r(\fil)$ and $r(\fil^\prime)$ by $r$ and $s$ respectively. It is clear that $ s\leq r.$ We use the notation `` $'$ " to denote the image in $R^\prime.$  Let $n \geq s$ and $a\in I_{n+1}.$ Then $a^\prime\in J'I_{n}'.$ Hence $a=p+xq$ for some $p\in JI_n$ and $q\in R.$ Therefore $xq\in I_{n+1}$ which implies $q \in (I_{n+1}:x).$ Since $x^*$ is a nonzerodivisor in $G(\fil),$ we have $(I_{n+1}:x)=I_n$ for all $n\in\ZZ.$ Hence we get the required result.
\eepf
\bd
 If $\underline x=x_1,\dots,x_r\in I_1,$ we say $\underline x$ is a superficial sequence for $\fil$ if for all $0\leq i< r,$ $x_{i+1}$ is superficial for $\fil/(x_1,\dots,x_i).$\ed
 Suppose $(R,\mm)$ is Cohen-Macaulay local ring of dimension $d,$ $I_1$ is an $\mm$-primary ideal and $\fil=\{I_n\}$ is an $I_1$-admissible filtration of ideals in $R.$ Suppose $x_1,\dots,x_r\in I_1$ and $1\leq r\leq d,$ then $x_1,\dots,x_r$ is a superficial sequence for $\fil$ if and only if $x_1,\dots,x_r$ is $R$-regular sequence and there exists an integer $n_0\geq 0$ such that for all $1\leq i\leq r,$ $$(x_1,\dots,x_i)\cap I_n=(x_1,\dots,x_i)I_{n-1}\mbox{ for all }n\geq n_0.$$ This result was first proved by Valabrega and Valla \cite[Corollary 2.7]{vv} for $I$-adic filtration and then by Huckaba and Marley \cite{huckaba-marley} for $\ZZ$-graded admissible filtartions. Marley \cite[Proposition A.2.4]{marleythesis} showed that if residue the field is infinite then any minimal reduction of $\fil $ can be generated by a superficial sequence for $\fil.$ The following lemma is due to Huckaba and Marley \cite[Lemma 2.1]{huckaba-marley}.
\bl\label{hhh}\cite[Lemma 2.1]{huckaba-marley}
Let $(R,\mm)$ be a Noetherian local ring of dimension $d\geq 1,$ $I$ an $\mm$-primary ideal and $\fil=\{I_n\}$ be an $I$-admissible filtration of ideals in $R.$ Let $x_1,\ldots,x_k$ be a superficial sequence for $\fil.$ If $\grade G(\fil)_+\geq k$ then $x_1^*,\ldots,x_k^*$ is a regular sequence in $G(\fil)$ and hence $G(\fil)/(x_1^*,\dots,x_k^*)\simeq G(\fil/(x_1,\dots,x_k))$ where $x_i^*$ is image of $x_i$ in $G(\fil).$
\el
\bpf
By induction it is enough to prove for $k=1.$ Let $(I_{n+1}:x_1)\cap I_c=I_n\mbox{ for all }n\geq c.$ Let $x^*\in {(0:x_1^*)\cap G(\fil)_n}$ for some $n\in\NN.$ We show that $x^*(G(\fil)_{+})^{c+1}=0.$ Let $0\neq z^*\in {G(\fil)_{+}^{c+1}\cap G(\fil)_{ p}}.$ Now $x^*z^*\in G(\fil)_{n+p}$ and $x_1xz\in I_{n+p+2}.$ Therefore $xz\in (I_{n+p+2}:x_1)\cap I_c =I_{n+p+1}.$
Thus $x^*z^*=0$ in $G(\fil).$ Hence $x^*\in (0:_{G(\fil)}{(G(\fil)_+)^{c+1}})=0.$
\eepf
The next theorem was proved for the $\mm$-adic by  Sally \cite[Proposition 3]{sa}. We have adapted her proof for any admissible filtration. 
\bt\label{result sa ma}
Let $(R,\mm)$ be a \CM local ring of dimension $d\geq 1$ with infinite residue field, $I$ an $\mm$-primary ideal and $\fil=\{I_n\}$ be an $I$-admissible filtration of ideals in $R.$ Let $H_R(n)=\lm\lf{I_n}/{I_{n+1}}\rg$ and $P_R(X)\in \QQ[X]$ such that $P_R(n)=H_R(n)$ for all large $n.$ Suppose $\grade G(\fil)_+\geq d-1.$ Then for a minimal reduction $J=(x_1,\ldots, x_d)$ of $\fil,$ $H_R(r_J(\fil)-d)\neq P_R(r_J(\fil)-d)$ and $H_R(n)= P_R(n)$ for all $n\geq r_J(\fil)-d+1.$ 
\et
\bpf
 We denote $r_J(\fil)$ by $r.$ We use induction on $d.$ Let $d=1.$ Without loss of generality we assume $x_1$ is superficial.  Then 
 $$H_{G(\fil)_+}^0(G(\fil))_n=\{z^*\in {I_n}/{I_{n+1}} \mid zI_l\in I_{n+l+1}\mbox{ for all large }l \}.$$ 
 For $n\geq r-1,$ $zx_1^l\in I_{n+l+1}=x_1^lI_{n+1}$ implies $z\in I_{n+1}.$ Thus for all $n\geq r-1,$ $H_{G(\fil)_+}^0(G(\fil))_n=0.$ 
\\ Now we prove that $H_{G(\fil)_+}^1(G(\fil))_{r-1}\neq 0$ and $H_{G(\fil)_+}^1(G(\fil))_{n}=0$ for all $n\geq r.$ 
For each $n,$ consider the following map  
$$\frac{I_{k+n}}{x_1^k I_{n}+I_{k+n+1}}\overset{\phi_k}\longrightarrow \frac{I_{k+n+1}}{x_1^{k+1} I_{n}+I_{k+n+2}}\mbox{ where }\phi_k(\ov z)=\ov {x_1z}.$$ For all large $k,$ $I_{k+n+1}=x_1I_{k+n}.$ Hence for all large $k,$ $\phi_k$ is surjective. Now suppose $\phi_k(\ov z)=0$ for some $\ov z\in {I_{k+n}}/{x_1^k I_{n}+I_{k+n+1}}.$ Then $x_1z\in x_1^{k+1} I_{n}+I_{k+n+2}.$ Therefore $x_1z=x_1^{k+1}a+b$ for some $a\in I_n$ and $b\in I_{k+n+2}.$ Thus $b\in (x_1)\cap I_{k+n+2}.$ Since $x_1$ is superficial, for all large $k,$ $b\in x_1I_{k+n+1}$ and hence $z\in x_1^k I_{n}+I_{n+k+1}.$ Thus for all large $k,$ $\phi_k$ is injective. Therefore by Proposition \ref{highest local cohomology}, for all large $k,$ $$H_{G(\fil)_+}^1(G(\fil))_n\simeq \frac{I_{k+n}}{x_1^k I_{n}+I_{k+n+1}}.$$
 Thus for all $n\geq r$ and for all large $k,$ $I_{k+n}=x_1^k I_n\subseteq x_1^k I_n+I_{k+n+1}.$ Hence $H_{G(\fil)_+}^1(G(\fil))_n=0$ for all $n\geq r.$ 
 \\Suppose $H_{G(\fil)_+}^1(G(\fil))_{r-1}=0.$ Then for all large $k,$ $$I_{k+r-1}=x_1^k I_{r-1}+I_{k+r}\subseteq x_1^kI_{r-1}.$$ Let $a\in I_{k+r-2}.$ Then $x_1a\in I_{k+r-1}\subseteq x_1^kI_{r-1}$ implies $a\in x_1^{k-1} I_{r-1}.$ Thus $I_{k+r-2}=x_1^{k-1} I_{r-1}.$ Using this procedure repeatedly, we get $I_r=x_1I_{r-1}$ which is a contradiction. Thus $H_{G(\fil)_+}^1(G(\fil))_{r-1}\neq 0.$  Therefore by Theorem \ref{GS}, we get the required result.
\\ Suppose $d\geq 2.$ Without loss of generality we assume $x_1,\ldots, x_d$ is superficial sequence for $\fil.$ Since $\grade  G(\fil)_+\geq d-1,$ by Lemma \ref{hhh}, we have $x_1^*$ is a nonzerodivisor of $G(\fil).$ By \cite[Proposition 3.1.3]{blancafort thesis} $G(\fil)/(x_1^*)\simeq G(\fil/(x_1)).$  For all $n\in\ZZ,$ consider the following exact sequence \beqnn\label{for} 0\longrightarrow \frac{I_{n-1}}{I_{n}}\overset{x_1^*}\longrightarrow \frac{I_n}{I_{n+1}}\longrightarrow \frac{I_n}{x_1I_{n-1}+I_{n+1}}\simeq\frac{I_n+(x_1)}{I_{n+1}+(x_1)}\longrightarrow 0.\eeqnn Then for all $n\in\ZZ,$ $$H_{R/(x_1)}(n)=H_{R}(n)-H_{R}(n-1)\mbox{ and hence }P_{R/(x_1)}(n)=P_{R}(n)-P_{R}(n-1).$$ Since $\dim {R}/{(x_1)}=d-1$ and $\grade  {G(\fil/(x_1))}_+\geq d-2,$ by induction and Lemma \ref{redn1}, we have $$H_{R/(x_1)}(r-d+1)\neq P_{R/(x_1)}(r-d+1)\mbox{ and }H_{R/(x_1)}(n)= P_{R/(x_1)}(n)\mbox{ for all }n\geq r-d+2.$$ Since there exists an integer $m,$ such that for all $n\geq m,$ $P_{R}(n)=H_{R}(n),$ we have $$P_{R}(n-1)-H_{R}(
n-1)=P_{R}(n)-H_{R}(n)=\cdots=P_{R}(n+m)-H_{R}(n+m)=0\mbox{ for all }n\geq r-d+2.$$ Therefore \beqn 0&\neq& P_{R/(x_1)}(r-d+1)-H_{R/(x_1)}(r-d+1)\\&=&[P_{R}(r-d+1)-H_{R}(r-d+1)]-[P_{R}(r-d)-H_{R}(r-d)]=P_{R}(r-d)-H_{R}(r-d).\eeqn
 \eepf
The following result is due to Marley \cite[Corollary 3.8]{marleythesis}. Here we give another proof which follows from Theorem \ref{result sa ma}.
\begin{theorem}\cite[Corollary 3.8]{marleythesis}
\label{relating pn and rn}
Let $(R,\mm)$ be a \CM local ring of dimension $d\geq 1$ with infinite residue field, $I$ an $\mm$-primary ideal and $\fil=\{I_n\}$ be an $I$-admissible filtration of ideals in $R.$ Let $\grade G(\fil)_+\geq d-1.$ Then $r_J(\fil)=n(\fil)+d$ for any minimal reduction $J$ of $\fil.$ In particular, $r(\fil)=n(\fil)+d.$
\end{theorem}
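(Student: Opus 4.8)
The plan is to derive this statement directly from Theorem \ref{result sa ma} by passing from the ``block'' Hilbert function $H_R(n)=\lm(I_n/I_{n+1})$ used there to the Hilbert function $\ho(n)=\lm(R/I_n)$ of the filtration, via first differences. First I would record the telescoping identity
$$\ho(n+1)-\ho(n)=\lm(R/I_{n+1})-\lm(R/I_n)=\lm(I_n/I_{n+1})=H_R(n),$$
valid for every $n\in\ZZ$ since $I_{n+1}\subseteq I_n$. Comparing with the Hilbert polynomials and using that $\ho(n)=\po(n)$ and $H_R(n)=P_R(n)$ hold for all large $n$, I get that the two polynomials $\po(X+1)-\po(X)$ and $P_R(X)$ agree at infinitely many integers, hence coincide identically in $\QQ[X]$; that is, $\po(n+1)-\po(n)=P_R(n)$ as polynomials.

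Next I would introduce $g(n):=\ho(n)-\po(n)$, whose first difference is $g(n+1)-g(n)=H_R(n)-P_R(n)$ by the previous step. Writing $r=r_J(\fil)$, Theorem \ref{result sa ma} gives $H_R(n)=P_R(n)$ for all $n\geq r-d+1$ and $H_R(r-d)\neq P_R(r-d)$. Thus $g(n+1)=g(n)$ for every $n\geq r-d+1$, so $g$ is constant on $[r-d+1,\infty)$; but $g(n)=0$ for all large $n$ by the very definition of the Hilbert polynomial, so this common value is $0$ and $\ho(n)=\po(n)$ for every $n>r-d$. On the other hand, taking $n=r-d$ in the difference identity and using $g(r-d+1)=0$ gives $g(r-d)=-(H_R(r-d)-P_R(r-d))\neq 0$, so $\ho(r-d)\neq\po(r-d)$.

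These two facts are exactly the defining conditions of the postulation number with $n=r-d$, whence $n(\fil)=r_J(\fil)-d$, i.e. $r_J(\fil)=n(\fil)+d$. Since the right-hand side is manifestly independent of the chosen minimal reduction $J$, every $r_J(\fil)$ equals the common value $n(\fil)+d$; in particular, taking the minimum over all minimal reductions yields $r(\fil)=n(\fil)+d$.

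I do not expect a serious obstacle, as the substantive work is already contained in Theorem \ref{result sa ma}. The only points demanding care are the bookkeeping at the polynomial level, namely verifying that $\po(n+1)-\po(n)=P_R(n)$ holds identically and not merely for large $n$, and matching the strict inequality $n>r-d$ against the form $n\geq r-d+1$ appearing in Theorem \ref{result sa ma} so that it lines up precisely with the definition of $n(\fil)$.
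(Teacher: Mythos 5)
Your proposal is correct and follows essentially the same route as the paper: both deduce the result from Theorem \ref{result sa ma} by observing that $H_R(n)=\ho(n+1)-\ho(n)$ and $P_R(n)=\po(n+1)-\po(n)$, then telescoping the difference $\ho-\po$ down from large $n$ to locate the first failure at $n=r_J(\fil)-d$. The only cosmetic difference is that you run one uniform argument for all $d\geq 1$, whereas the paper treats $d=1$ separately by an explicit computation before giving the same differencing argument for $d\geq 2$.
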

\bpf
Let $H_R(n)=\lm\lf{I_n}/{I_{n+1}}\rg$ for all $n$ and $P_R(X)\in \QQ[X]$ such that $P_R(n)=H_R(n)$ for all large $n.$ Let $d=1$ and $J$ be any minimal reduction of $\fil.$ Denote $r_J(\fil)$ by $r.$ Then degree of the polynomial $P_R(X)$ is zero. Hence $P_R(X)=a$ where $a$ is a nonzero constant. By Theorem \ref{result sa ma}, for all $n\geq r,$ $P_R(n)=H_R(n).$ Therefore for all $n\geq r,$ we have $$\lm\lf\frac{R}{I_{n}}\rg=(n-r)a+\lm\lf\frac{R}{I_{r}}\rg=na+(\lm\lf\frac{R}{I_{r}}\rg-ra).$$ Hence $\po(n)=\ho(n)$ for all $n\geq r.$ Suppose $\po(r-1)=\ho(r-1).$ Then $$-a+\lm\lf\frac{R}{I_{r}}\rg=\lm\lf\frac{R}{I_{r-1}}\rg.$$ This implies $P_R(r-1)= H_R(r-1)$ which contradicts Theorem \ref{result sa ma}. Thus $r_J(\fil)-1=n(\fil)$ for any minimal reduction $J$ of $\fil.$ Hence we get the result for $d=1.$
\\Suppose $d\geq 2$ and $J=(x_1,\ldots, x_d)$ is a minimal reduction of $\fil$ consisting of superficial elements. Denote $r_J(\fil)$ by $r.$ For all $n\in\ZZ,$ we get $$H_{R}(n)=\ho(n+1)-\ho(n)\mbox{ and hence }P_{R}(n)=\po(n+1)-\po(n).$$ Since there exists an integer $m,$ such that for all $n\geq m,$ $\po(n)=\ho(n),$ by Theorem \ref{result sa ma}, we have $$\po(n)-\ho(n)=\po(n+1)-\ho(n+1)=\cdots=\po(n+m)-\ho(n+m)=0\mbox{ for all }n\geq r-d+1.$$ Again using Theorem \ref{result sa ma}, we get \beqn 0&\neq& P_{R}(r-d)-H_{R}(r-d)\\&=&[\po(r-d+1)-\ho(r-d+1)]-[\po(r-d)-\ho(r-d)]=\po(r-d)-\ho(r-d).\eeqn Thus $r_J(\fil)-d=n(\fil)$ for any minimal reduction $J$ of $\fil.$ Hence $r(\fil)=n(\fil)+d.$
\eepf

\section{Nonnegativity and vanishing of Hilbert coefficients}
In this section we apply Grothendieck-Serre formula to derive various properties of the Hilbert coefficients. We derive a result of Northcott, Narita, Marley and Itoh. 
We also derive a formula for the components of local cohomology modules of Rees algebras in terms of the Hilbert coefficients (Proposition \ref{formula for lc}) which generalises \cite[Proposition 5]{sa} and \cite[Proposition 3.3]{1jv}. 

The following theorem is a generalisation of a result due to Northcott \cite[Theorem 1]{northcott}.
\bt{\em{(Northcott's inequality)}}\label{n1}
Let $(R,\mm)$ be a $d\geq 1$-dimensional \CM local ring, $I$ an $\mm$-primary ideal and $\fil=\lbrace I_n\rbrace_{n\in{\ZZ}}$ be an 
 $I$-admissible filtration of ideals in $R.$ Then $$e_1{(\fil)}\geq e_0{(\fil)}-\lm\lf\frac{R}{I_1}\rg\geq 0.$$
\et
\bpf
 We use induction on $d.$ Let $d=1.$ Since $R$ is Cohen-Macaulay, putting $n=1$ in the Difference Formula (Theorem \ref{r3}) for Rees algebra of $\fil$, we have \beqn e_0{(\fil)}-e_1{(\fil)}-\lm\lf\frac{R}{I_1}\rg&=&\po(1)-\ho(1)\\&=&\lm_{R}[H_{\R(\fil)_{+}}^0(\mathcal{R}'(\fil))]_{1}-\lm_{R}[H_{\R(\fil)_{+}}^1(\mathcal{R}'(\fil))]_{1}\\&=&-\lm_{R}[H_{\R(\fil)_{+}}^1(\mathcal{R}'(\fil))]_{1}\leq 0.\eeqn Thus we get the first inequality. Suppose $d\geq 2$ and the result is true for rings with dimension upto $d-1.$ \wlg we may assume that the residue field of $R$ is infinite. Let $x\in I_1$ be a superficial element for $\fil.$ Then $e_0(\fil)=e_0(\fil')$ and $e_1(\fil)=e_1(\fil')$ where `` $'$ " denotes the image in $R'=R/(x).$ Since $\lm(R'/I_1')=\lm(R/I_1),$ by induction hypothesis we get the first inequality.
 \\ For any minimal reduction $J$ of $\fil,$ $J$ is minimal reduction  $I_1$ by \cite[Lemma 1.5]{rees3}. Hence  we get the second inequality.
\eepf
\bt\cite[Theorem 5.6]{msv}\label{all}
Let $(R,\mm)$ be a Cohen-Macaulay local ring of dimension $d\geq 1$ and $\idl$ be $\mm$-primary ideals of $R.$ Let $\fil=\lbrace\fil(\n)\rbrace_{\n\in\ZZ^s}$ be an $\I$-admissible filtration. Then 
\ben{
\item[(1)] $e_{\alpha}(\fil)\geq 0$ where $\alpha=({\alpha}_1,\ldots,{\alpha}_s)\in\NN^s,$ 
$|\alpha|\geq d-1.$
\item[(2)] $e_{\alpha}(\fil)\geq 0$ where $\alpha=({\alpha}_1,\ldots,{\alpha}_s)\in\NN^s,$ $|\alpha|= d-2$ and $d\geq 2.$}\een
\bpf
$(1)$ For $|\alpha|=d,$ the result follows from \cite[Theorem 2.4]{rees3}. Suppose $|\alpha|=d-1.$ 
We use induction on $d.$ Let $d=1.$ Then putting $\n=\underline 0$ in the Difference Formula 
(Theorem \ref{r3}), we get $e_{\underline 0}(\fil)=\lm_{R}[H_{\R_{++}}^1(\mathcal{R}'(\fil))]_{\underline 0}\geq 0.$ 
Let $d\geq 2$ and assume the result  for rings of dimension $d-1.$ Then there exists $i$ such 
that $\alpha_i\geq 1.$ \wlg assume $\alpha_1\geq 1.$ By Lemma \ref{one}, there exists a nonzerodivisor 
$x\in I_1$ such that $(x)\cap\fil(\n)=x\fil(\n-e_1)$ for all $\n\in\NN^s$ such that $
n_1\gg0.$ Let $R^\prime={R}/{(x)}$ and 
$\fil^\prime=\lbrace\fil(\n)R^\prime\rbrace_{\n\in\ZZ^s}.$ For 
all large $\n,$ consider the following short exact sequence 
$$0\longrightarrow\frac{(\fil(\n):(x))}{\fil(\n-e_1)}\longrightarrow\frac{R}{\fil(\n-e_1)}\overset{.x}
\longrightarrow\frac{R}{\fil(\n)}\longrightarrow\frac{R}{(x,\fil(\n))}\longrightarrow 0.$$ 
Since $\fa$ large $\n,$ $(\fil(\n):(x))=\fil(\n-e_1),$ we get $P_{\fil'}(\n)=\po(\n)-\po(\n-e_1).$ 
Hence $(-1)^{d-1-|(\alpha-e_1)|}b_{(\alpha-e_1)}(\fil')=(-1)^{d-|\alpha|}e_{\alpha}(\fil)$ where 
$$
P_{\fil'}(\n)=\displaystyle\sum\limits_{\substack{\gamma=({\gamma}_1,\ldots,{\gamma}_s)\in{\NN}^s \\ |\gamma|\leq d-1}}(-1)^{d-1-|{\gamma}|}{b_{\gamma}}(\fil')\binom{{n_1}+{{\gamma}_1}-1}{{\gamma}_1}\cdots\binom{{n_s}+{{\gamma}_s}-1}{{\gamma}_s}.$$ Since $|(\alpha-e_1)|=d-2=(d-1)-1,$ by induction $b_{(\alpha-e_1)}(\fil')\geq 0.$ Hence for $|\alpha|=d-1,$ $e_{\alpha}(\fil)\geq 0.$
\\$(2)$ We prove the result using induction on $d.$ For $d=2$ the result follows from the Difference Formula (Theorem \ref{r3}) for $\n=\underline 0$ and Proposition \ref{zero}. The rest is same as for the case $|\alpha|=d-1.$
\eepf
\et
As a consequence of this we get the following results which is proved by Marley \cite[Proposition 3.19, Proposition 3.23]{marleythesis}.
The next one is a generalisation of a result due to M. Narita \cite[Theorem 1]{narita}. Here we give different proof.
\bp\label{v2}
Let $(R,\mm)$ be a $d$-dimensional $(d\geq 2)$ \CM local ring, $I$ an $\mm$-primary ideal and $\fil=\lbrace I^n\rbrace_{n\in{\ZZ}}$ an admissible $I$-filtration. Then $e_2{(\fil)}\geq 0.$ 
\ep
\bpf
Comparing the expressions of coefficients of Hilbert polynomials for $s=1$ and $s\geq 2,$ by Theorem \ref{all}, we get the required result.
\eepf

It is natural to ask whether $e_i(\fil)$ are nonnegative for $i \geq 3$ in a Cohen-Macaulay local ring. Narita \cite[Theorem 2]{narita} and Marley \cite[Example 2]{marley} gave an example of an ideal in a \CM local ring with $e_3(I) < 0 .$ 
\begin{example}\cite[Theorem 2]{narita}
Let $\Delta$ be a formal power series $k[[X_1,X_2,X_3,X_4]]$ over a field $k$ and $Q=\Delta/{\Delta}X_4^3.$ Then $Q$ is a \CM local ring of dimension $3.$ Let $x_1,x_2,x_3,x_4$ be the images of $X_1,X_2,X_3,X_4$ in $Q$ and $I=Qx_1+Qx_2^2+Qx_3^2+Qx_2x_4+Qx_3x_4.$ Then $$e_3(I)=-\lm_{Q'}\lf\frac{((IQ')^2: (x_2Q')^2)}{IQ'}\rg=-\lm_{Q'}\lf\frac{IQ'+(x_4Q')^2}{IQ'}\rg< 0\mbox{ where }Q'=Q/(x_1).$$
\end{example}
\begin{example} \cite[Example 2]{marley}
  Let  $I=(X^3,Y^3,Z^3,X^2Y,XY^2,YZ^2,XYZ)$ in the regular local ring  $R=k[X,Y,Z]_{(X,Y,Z)}.$ Then for all $n \geq 1,$ 
  $$P_I(n)=27{n+2\choose 3}-18{n+1\choose 2}+4n+1.$$
  Hence $e_3(I)=-1<0.$
 \end{example}

However, for $\fil=\{\ov{I^n}\}_{n \in \ZZ}$, Itoh proved that ${e}_3(\fil)$ is nonnegative in an analytically unramified Cohen-Macaulay local ring \cite[Theorem 3]{i}. In order to prove this, he used an analogue of Theorem \ref{GS}
(see \cite[p. 114]{i}). In \cite[Corollary 3.9]{huckaba-huneke}, authors gave an alternative proof of this result. We prove this result using the GSF. For this purpose, we recall  some results of Itoh about vanishing of graded components of local cohomology modules. See also \cite[Theorem 1.2]{hu}.

\begin{theorem}
{\rm \cite[Theorem 2]{i88}} {\rm \cite[Proposition 13]{i}}\label{vanishing of lc} Let $(R, \mathfrak{m})$ be an analytically unramified  Cohen-Macaulay local ring of
 dimension $d \geq 2.$ Let $\mathcal M=(t^{-1}, \R(\fil)_+)$ be the maximal homogeneous ideal of $\R'(\fil).$ Then the  following statements hold true for the filtration $\fil=\{\ov{I^n}\}_{n \in \ZZ}:$  \\
{\rm (1) } $H^0_{\mathcal M}(\mathcal{R}'(\fil))=H^1_{\mathcal M}(\mathcal{R}'(\fil))=0;$ \\
{\rm  (2) } $H^2_{\mathcal{M}}(\mathcal{R}'(\fil)_j=0$ for $j \leq 0;$ \\
{\rm (3)} $H^i_{\mathcal{M}}(\mathcal{R}'(\fil))=H^i_{\R(\fil)_+}(\mathcal{R}'(\fil))$ for $i=0,1,\ldots,d-1.$
\end{theorem}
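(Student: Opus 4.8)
The plan rests on the principal element $t^{-1}\in\R'(\fil)$, which is a nonzerodivisor: since $R$ is analytically unramified it is reduced, so $t^{-1}$ is a unit of the reduced overring $R[t,t^{-1}]\supseteq\R'(\fil)$ and hence a nonzerodivisor on $\R'(\fil)$. Killing it yields the short exact sequence
$$0\longrightarrow \R'(\fil)(-1)\overset{t^{-1}}{\longrightarrow}\R'(\fil)\longrightarrow G(\fil)\longrightarrow 0,\qquad G(\fil)=\bigoplus_{n\geq 0}\ov{I^n}/\ov{I^{n+1}}.$$
For (1) the nonzerodivisor gives $H^0_{\mathcal M}(\R'(\fil))=0$ immediately, and the long exact sequence reduces $H^1_{\mathcal M}(\R'(\fil))=0$ to $H^0_{\mathcal M}(G(\fil))=0$. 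Now the normal filtration is Ratliff--Rush closed, $\breve\fil(n)=\ov{I^n}$ (by the valuative criterion: $x\ov{I^k}\subseteq\ov{I^{n+k}}$ forces $v(x)\geq n\,v(I)$ at every Rees valuation), so Theorem \ref{two} gives $H^0_{G(\fil)_+}(G(\fil))=0$; since each $\ov{I^n}/\ov{I^{n+1}}$ has finite length, $G(\fil)$ is $\mm$-torsion and hence $H^0_{\mathcal M}(G(\fil))=H^0_{G(\fil)_+}(G(\fil))=0$. Feeding this back, $t^{-1}$ acts injectively on $H^1_{\mathcal M}(\R'(\fil))$ in every degree, and as this module is Artinian it vanishes in high degrees, so a downward induction kills it entirely.

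For (3) I would use that, by definition, $\mathcal M=\R(\fil)_++(t^{-1})$, whence $\Gamma_{\mathcal M}=\Gamma_{(t^{-1})}\circ\Gamma_{\R(\fil)_+}$ and there is a Grothendieck spectral sequence $E_2^{p,q}=H^p_{(t^{-1})}(H^q_{\R(\fil)_+}(\R'(\fil)))\Rightarrow H^{p+q}_{\mathcal M}(\R'(\fil))$. As $(t^{-1})$ is principal, $E_2^{p,q}=0$ for $p\geq 2$ and the sequence collapses to
$$0\to H^1_{(t^{-1})}\big(H^{i-1}_{\R(\fil)_+}(\R'(\fil))\big)\to H^i_{\mathcal M}(\R'(\fil))\to H^0_{(t^{-1})}\big(H^i_{\R(\fil)_+}(\R'(\fil))\big)\to 0.$$
The comparison map is therefore an isomorphism once $H^{i-1}_{\R(\fil)_+}(\R'(\fil))$ and $H^i_{\R(\fil)_+}(\R'(\fil))$ are $t^{-1}$-torsion. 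This is exactly where the Cohen--Macaulay hypothesis enters: inverting $t^{-1}$ turns $\R'(\fil)$ into $R[t,t^{-1}]$ and $\R(\fil)_+$ into the $\mm$-primary ideal $\ov{I}\,R[t,t^{-1}]$, so $\big(H^q_{\R(\fil)_+}(\R'(\fil))\big)_{t^{-1}}\cong H^q_{\mm}(R)\otimes_R R[t,t^{-1}]$, which vanishes for $q\leq d-1$ because $R$ is Cohen--Macaulay of dimension $d$. Hence $H^q_{\R(\fil)_+}(\R'(\fil))$ is $t^{-1}$-torsion for $q\leq d-1$, and the short exact sequence yields $H^i_{\mathcal M}(\R'(\fil))\cong H^i_{\R(\fil)_+}(\R'(\fil))$ for $i\leq d-1$.

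Part (2) is the delicate one, and I would attack it through the \v{C}ech complex on $t^{-1},x_1t,\ldots,x_dt$, where $J=(x_1,\ldots,x_d)$ is a minimal reduction of $\fil$, so that $\sqrt{\mathcal M}=\sqrt{(t^{-1},x_1t,\ldots,x_dt)}$. In degrees $j\leq 0$ one has $\ov{I^j}=R$, so the negative strand of $\R'(\fil)$ is merely $R[t^{-1}]$, and the Rees/Valabrega--Valla relations $\ov{I^n}\cap(x_i)=x_i\ov{I^{n-1}}$ of Lemma \ref{one} let me identify the degree-$j$ part of the complex with a Koszul computation over the Cohen--Macaulay ring $R$; for $d\geq 3$ the relevant cohomology sits below the top and vanishes, while the genuinely hard residual case is $d=2$, where $H^2_{\mathcal M}$ is one step below the top and the vanishing in degrees $\leq 0$ must be extracted from the graded structure of the normal filtration rather than from $R$ alone, amounting to an $a$-invariant bound $a(G(\fil))<0$.

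The principal obstacle is exactly this control of the nonpositive-degree components in (2). Vanishing in large positive degrees is automatic from the finiteness in Theorem \ref{r3}(1), but the extended Rees algebra carries the nontrivial tail $\bigoplus_{n<0}Rt^n$, and bounding its contribution to $H^2_{\mathcal M}$ in degrees $\leq 0$ is precisely where one must use that every $\ov{I^n}$ is integrally closed — that is, the analytic unramifiedness of $R$ — to force the $a$-invariant bound on $G(\fil)$ and the accompanying Koszul vanishing that the \v{C}ech computation requires.
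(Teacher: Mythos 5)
The paper itself gives no proof of this theorem: it is quoted verbatim from Itoh (\cite[Theorem 2]{i88}, \cite[Proposition 13]{i}), so there is no in-text argument to compare yours against and I can only judge your attempt on its own terms. Your parts (1) and (3) are essentially correct. For (1), the identification $\breve\fil(n)=\ov{I^n}$ via Rees valuations combined with Theorem \ref{two} does give $H^0_{G(\fil)_+}(G(\fil))=0$ and hence $H^0_{\mathcal M}(G(\fil))=0$; the one slip is at the end, where the injection $t^{-1}\colon [H^1_{\mathcal M}(\R'(\fil))]_{n+1}\to [H^1_{\mathcal M}(\R'(\fil))]_{n}$ \emph{lowers} degree, so ``vanishing in high degrees plus downward induction'' runs the wrong way; conclude instead that $H^1_{\mathcal M}(\R'(\fil))$ is $\mathcal M$-torsion, hence every element is killed by a power of $t^{-1}$, which is incompatible with injectivity of $t^{-1}$ unless the module vanishes. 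For (3), the composite-functor spectral sequence for $\Gamma_{\mathcal M}=\Gamma_{(t^{-1})}\circ\Gamma_{\R(\fil)_+}$ together with the computation $\bigl(H^q_{\R(\fil)_+}(\R'(\fil))\bigr)_{t^{-1}}\cong H^q_{\mm}(R)\otimes_R R[t,t^{-1}]=0$ for $q\leq d-1$ (by Cohen--Macaulayness of $R$) is a clean and correct argument.

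Part (2), however, is a genuine gap, and it is the real content of the theorem. You set up a \v{C}ech complex on $t^{-1},x_1t,\ldots,x_dt$ and then concede that the vanishing in degrees $j\leq 0$ ``must be extracted from the graded structure of the normal filtration'' without extracting it. Two concrete problems. First, the claim that for $d\geq 3$ the vanishing holds because ``$H^2_{\mathcal M}$ sits below the top'' is unjustified: $H^2_{\mathcal M}(\R'(\fil))$ vanishes below the top cohomological degree only if $\depth \R'(\fil)\geq 3$, which is not known a priori and is in fact part of what the theorem establishes (note that already for $d=2$ one has $[H^2_{\R(\fil)_+}(\R(\fil))]_{-1}\neq 0$ in general by Proposition \ref{co}, so ``below the top'' carries no automatic vanishing). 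Second, the Valabrega--Valla relations $\ov{I^n}\cap(x_i)=x_i\ov{I^{n-1}}$ of Lemma \ref{one} are far too weak to control the Koszul/\v{C}ech cohomology in nonpositive degrees; what Itoh's proof actually rests on is his intersection theorem $\ov{I^{n+1}}\cap Q=Q\,\ov{I^{n}}$ for a minimal reduction $Q$ of $I$ (proved independently by Huneke), a Brian\c{c}on--Skoda-type statement that is precisely where analytic unramifiedness and integral closedness enter, followed by a degree-by-degree analysis of the resulting complexes. None of that input appears in your sketch, so statement (2) --- and with it the applications in the paper that depend on it --- remains unproved.
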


\begin{theorem} \cite[Theorem 3]{i}
 Let $(R,\mm)$ be an analytically unramified Cohen-Macaulay local 
ring of dimension $d \geq 3$ and $I$ be an $\mm$-primary ideal in $R.$ Then $\ov{e}_3(I) \geq 0.$
\end{theorem}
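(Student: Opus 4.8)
The plan is to read off $\ov e_3(I)$ as the length of a single graded piece of a local cohomology module, using the Grothendieck--Serre difference formula of Theorem \ref{r3} together with Itoh's vanishing theorem (Theorem \ref{vanishing of lc}), applied to the normal filtration $\fil=\{\ov{I^n}\}_{n\in\ZZ}$ (an $\I$-admissible filtration, with $s=1$, since $R$ is analytically unramified). The guiding observation is that when the dimension is exactly $3$, the coefficient $\ov e_3$ is just $-\po(0)$, so evaluating the difference formula at the single spot $n=0$ collapses the whole alternating sum to one nonnegative length. Accordingly the first move is to reduce from $d\geq 3$ to $d=3$. \wlg I would also assume the residue field is infinite, so that minimal reductions and superficial elements are available.

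\textbf{Reduction to $d=3$.} For $d\geq 4$ I would choose a sufficiently general element $x\in I$ which is superficial for $\fil$ and a nonzerodivisor on $R$, and pass to $R'=R/(x)$. This lowers the dimension by one, and since $3<d$ the coefficient $\ov e_3$ is unchanged (the normal Hilbert coefficients $\ov e_i$ are preserved for $i<d$ under such a reduction). Iterating brings us to $d=3$. The step requiring genuine care, and the main obstacle, is that $R'$ must remain analytically unramified and Cohen--Macaulay so that Theorem \ref{vanishing of lc} still applies, and that the induced filtration $\{\ov{I^n}R'\}$ computes the same coefficients as the honest normal filtration $\{\ov{(IR')^n}\}$ of $R'$. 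Verifying that a general hyperplane section of an analytically unramified Cohen--Macaulay ring stays analytically unramified is exactly where the hypothesis is really used; this is the technical heart of the argument.

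\textbf{The computation in dimension three.} With $d=3$, I would write the difference formula of Theorem \ref{r3} (for $s=1$) at $n=0$:
\[
\po(0)-\ho(0)=\sum_{i\geq 0}(-1)^i\,\lm_R\big([H^i_{\R(\fil)_+}(\R'(\fil))]_0\big).
\]
On the left, $\ho(0)=\lm(R/\ov{I^0})=0$, while $\po(0)=-\ov e_3(I)$ because in dimension $3$ the binomials $\binom{2}{3},\binom{1}{2},\binom{0}{1}$ all vanish. On the right, the terms $i=0,1$ vanish by parts (1) and (3) of Theorem \ref{vanishing of lc}; the term $i=2$ vanishes because $H^2_{\R(\fil)_+}(\R'(\fil))=H^2_{\mathcal M}(\R'(\fil))$ by part (3) (as $2=d-1$) and $[H^2_{\mathcal M}(\R'(\fil))]_0=0$ by part (2); and the terms $i\geq 4$ vanish since $\sqrt{\R(\fil)_+}=\sqrt{(x_1t,\ldots,x_dt)}$ is generated up to radical by $d=3$ elements. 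Only $i=3$ survives, so $-\ov e_3(I)=-\lm_R\big([H^3_{\R(\fil)_+}(\R'(\fil))]_0\big)$, whence
\[
\ov e_3(I)=\lm_R\big([H^3_{\R(\fil)_+}(\R'(\fil))]_0\big)\geq 0,
\]
the finiteness of this length being guaranteed by Theorem \ref{r3}(1).

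In summary, the dimension-three computation is essentially forced once Itoh's vanishing theorem is available, and the substantive difficulty lies entirely in the reduction: keeping the quotient analytically unramified Cohen--Macaulay and matching the reduced filtration with the normal filtration so that the vanishing statements persist down to $d=3$.
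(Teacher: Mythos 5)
Your dimension-three computation is correct and coincides with the paper's base case: evaluating the difference formula of Theorem \ref{r3} at $n=0$ for $\fil=\{\ov{I^n}\}_{n\in\ZZ}$ and killing the terms $i=0,1,2$ via Theorem \ref{vanishing of lc} leaves $\ov e_3(I)=h^3_{\R(\fil)_+}(\R'(\fil))_0\geq 0$. The problem is the reduction step, which you correctly identify as the technical heart but then leave unresolved --- and in the form you sketch it, it does not work. If $x\in I$ is an ordinary superficial element chosen inside $R$, the quotient $R/(x)$ need not be analytically unramified; and even when it is, the image filtration $\{\ov{I^n}R'\}$ (which is what the standard superficial-element argument controls) is not the normal filtration $\{\ov{(IR')^n}\}$ of $R'$, so the assertion that ``the normal Hilbert coefficients $\ov e_i$ are preserved for $i<d$'' is exactly the claim your argument never establishes. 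Both points are fatal, because the base case relies on Theorem \ref{vanishing of lc}, which requires the smaller ring to be analytically unramified and the filtration to be its genuine integral-closure filtration.

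The paper's induction gets around this with two moves absent from your proposal. First, it replaces $I$ by a reduction $J$, using $\ov{I^n}=\ov{J^n}$ to get $\ov e_3(I)=\ov e_3(J)$. Second, instead of cutting by an element of $R$, it uses Itoh's generic hyperplane section: for a suitable system of generators $x_1,\ldots,x_d$ of $J$ one passes to $R(T)=R[T_1,\ldots,T_d]_{\mm[T]}$ and sets $C=R(T)/(\sum_{i} x_iT_i)$. Itoh's Theorem 1 and Corollary 8 (cited in the paper) are precisely the nontrivial inputs guaranteeing that $C$ is again an analytically unramified Cohen--Macaulay local ring of dimension $d-1$ and that $\ov e_3(J)=\ov e_3(JC)$, where the latter is computed with the honest normal filtration of $JC$. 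Without importing these results (or reproving them), your induction does not close.
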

\begin{proof} For $\fil=\{\ov{I^n}\}_{n \in \ZZ},$ we set $\ov{\R'}(I):=\R'(\fil).$ 
We use induction on $d.$ Let $d=3.$ Then, by the Difference Formula (Theorem \ref{r3}) for Rees algebras and  Theorem \ref{vanishing of lc}, we have 
\begin{eqnarray*}
\ov e_3(I)=h^3_{\ov{\mathcal R^{'}}(I)_+}\ov{\mathcal{ R}'}(I)_0\geq 0. 
\end{eqnarray*}
Let $d >3.$ We may assume that the residue field of $R$ is infinite. Let $J \subseteq I$ be a reduction of $I.$ Since $\ov{I^n}=\ov{J^n}$ for all $n,$ $\ov{e}_i(I)=\ov{e}_i(J)$ for all $i=1,\ldots, d.$ Therefore it suffices to show that $\ov{e}_3(J) \geq 0.$ By \cite[Theorem 1 and Corollary 8]{i}, there exists a system of generators $x_1,\ldots,x_d $ of $J$ such that, if we put $T=(T_1,\ldots,T_d), R(T)=R[T]_{\mm[T]}$ and $ C=R(T)/(\sum_{i=1}^d x_i T_i)$, then $C$ is an analytically unramified Cohen-Macaulay local ring of dimension $d-1$ and $\ov{e}_3(J)=\ov{e}_3(JC).$ Hence, by using induction hypothesis the result follows. 
\end{proof}

\noindent
Itoh \cite[p. 116]{i} proposed the following conjecture on the vanishing of $\ov{e}_3(I)$ which is still open.

\begin{conjecture}
\label{itoh's conjecture}
{\rm (Itoh's Conjecture):} 
 Let $(R,\mm)$ be an analytically unramified Gorenstein local ring of dimension $d\geq 3$. 
Then $\ov e_3(I)=0$ if and only if $\ov{I^{n+2}}=I^n \ov{I^2}$ for every $n \geq 0.$
\end{conjecture}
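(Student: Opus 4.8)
The plan is to treat the two implications of Conjecture \ref{itoh's conjecture} separately and, in both, to reduce to the base case $d = 3$ by an inductive hyperplane-section argument modelled on the proof of Itoh's theorem $\ov{e}_3(I) \geq 0$ given just above. Concretely, for $d > 3$ one replaces $I$ by a reduction $J$ with generators $x_1, \ldots, x_d$ and forms $C = R(T)/(\sum_{i=1}^d x_i T_i)$, an analytically unramified \CM local ring of dimension $d-1$; since $\ov{I^n} = \ov{J^n}$ one has $\ov{e}_3(I) = \ov{e}_3(JC)$, so the numerical hypothesis $\ov{e}_3(I) = 0$ descends to $C$. The first technical point to verify is that the ideal-theoretic condition is compatible with this specialisation, that is, $\ov{I^{n+2}} = I^n \ov{I^2}$ for all $n \geq 0$ in $R$ if and only if $\ov{(JC)^{n+2}} = (JC)^n \ov{(JC)^2}$ for all $n \geq 0$ in $C$. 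Granting both compatibilities, the Gorenstein hypothesis passing to $C$, and the induction, the whole problem collapses to $d = 3$.

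In dimension $3$ I would push everything into local cohomology via the difference formula. Writing $\fil = \{\ov{I^n}\}_{n \in \ZZ}$ and $\ov{\R'}(I) = \R'(\fil)$, Theorem \ref{r3} together with the Itoh vanishing of Theorem \ref{vanishing of lc} yields, exactly as in the proof of $\ov{e}_3 \geq 0$, the identity $\ov{e}_3(I) = \lm\,[H^3_{\R(\fil)_+}(\R'(\fil))]_0$. Hence $\ov{e}_3(I) = 0$ is equivalent to the single vanishing $[H^3_{\R(\fil)_+}(\R'(\fil))]_0 = 0$. The monotonicity Lemma \ref{result 4} in the case $s = 1$, which makes the lengths $\lm\,[H^3_{\R(\fil)_+}(\R'(\fil))]_n$ non-increasing in $n$, then propagates this to $[H^3_{\R(\fil)_+}(\R'(\fil))]_n = 0$ for every $n \geq 0$; for the converse implication one would run the same chain backwards, aiming to deduce $[H^3_{\R(\fil)_+}(\R'(\fil))]_0 = 0$, and hence $\ov{e}_3(I) = 0$, from the structural equations.

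The bridge between the cohomological statement and the ideal-theoretic one is supplied by the Koszul-limit description of the top local cohomology. For a minimal reduction $\underline{x} = (x_1, x_2, x_3)$ of $\fil$, Proposition \ref{hlcr} gives
$$[H^3_{\R(\fil)_+}(\R(\fil))]_n = \lim_{\stackrel{\longrightarrow}{k}} \frac{\ov{I^{3k+n}}}{(x_1^k, x_2^k, x_3^k)\,\ov{I^{2k+n}}},$$
and since $d = 3 > 1$, Proposition \ref{result 2} identifies this with $[H^3_{\R(\fil)_+}(\R'(\fil))]_n$. Thus the ``if'' direction amounts to showing that the equalities $\ov{I^{n+2}} = I^n \ov{I^2}$ collapse these colimits to zero, while the ``only if'' direction asks one to recover the exact identities $\ov{I^{n+2}} = I^n \ov{I^2}$ — with \emph{ordinary} powers $I^n$ rather than normalised powers $\ov{I^n}$ — from the vanishing of the colimits. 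Here the Gorenstein hypothesis must enter decisively, through the self-duality of the canonical module of $\R'(\fil)$, which is precisely what separates the conjecture from the bare \CM situation where $\ov{e}_3$ may be negative.

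I expect the main obstacle to be exactly this recovery step in the ``only if'' direction. The vanishing of $[H^3_{\R(\fil)_+}(\R'(\fil))]_n$ is an asymptotic colimit statement, whereas $\ov{I^{n+2}} = I^n \ov{I^2}$ is an exact finite-level identity involving the non-normalised products $I^n \ov{I^2}$; passing from one to the other demands tight control of the normal reduction number and of how integral closure interacts with the reduction $\underline{x}$, and it is here that the partial results of Kummini and Masuti stop short of the full statement. A secondary difficulty is making the hyperplane-section step transport the ideal-theoretic equation faithfully in both directions, since specialisation and integral closure need not commute without the Gorenstein-type hypotheses that keep $C$ analytically unramified and Gorenstein.
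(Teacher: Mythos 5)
This statement is labelled as a \emph{conjecture} in the paper and is explicitly described there as still open; the paper offers no proof of it, only a survey of the known partial results. So there is nothing to match your proposal against, and the real question is whether your outline closes the conjecture. It does not. The steps you describe that are sound are precisely the ones already in the literature and recorded in the paper: the reduction to $d=3$ via $C=R(T)/(\sum_i x_iT_i)$ (Itoh, \cite[Corollary 8 and Proposition 17]{i}), the identity $\ov{e}_3(I)=h^3_{\ov{\R'}_+}(\ov{\R'})_0$ from Theorem \ref{r3} and Theorem \ref{vanishing of lc}, the propagation of this vanishing to all degrees $n\geq 0$ via Lemma \ref{result 4}, and the fact that the ideal-theoretic condition $\ov{I^{n+2}}=I^n\ov{I^2}$ for all $n\geq 0$ is equivalent to the Cohen--Macaulayness of $\ov{\R'}(I)$, from which the ``if'' direction follows (Itoh, \cite[Proposition 10]{i88}).

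The genuine gap is the ``only if'' direction in dimension $3$, and it is not a technical loose end but the entire open problem. Your plan is routed through the top local cohomology $H^3_{\R(\fil)_+}(\R(\fil))_n$ and its Koszul-colimit description (Proposition \ref{hlcr}), but the vanishing of these components for $n\geq 0$ is a \emph{consequence} of $\ov{e}_3(I)=0$ that is already known, and the conjecture has nonetheless resisted proof; so no amount of unwinding that colimit can, by itself, produce the finite-level identities $\ov{I^{n+2}}=I^n\ov{I^2}$. Concretely, since $\dim\ov{\R'}=4$, Cohen--Macaulayness requires $H^i_{\mathcal M}(\ov{\R'})=0$ for $i\leq 3$; Theorem \ref{vanishing of lc} handles $i=0,1$ and the nonpositive degrees of $i=2$, and Theorem \ref{theorem:itoh}(1) already gives $H^3_{\mathcal M}(\ov{\R'})=0$ when $\ov{e}_3(I)=0$. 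What remains --- and what your outline never engages --- is the vanishing of $H^2_{\mathcal M}(\ov{\R'})$ in positive degrees, together with the precise use of the Gorenstein hypothesis, which you invoke only as ``self-duality of the canonical module'' without any argument. As written, the proposal is a research plan that terminates exactly where the known partial results of \cite{km} terminate, and it should not be presented as a proof of Conjecture \ref{itoh's conjecture}.
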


 Itoh proved the  ``if'' part of the Conjecture \ref{itoh's conjecture} in \cite[Proposition 10]{i88}. He also proved 
the ``only if'' part of the Conjecture \ref{itoh's conjecture} for $\ov{I}=\mm$ \cite[Theorem 3(2)]{i}. By \cite[Corollary 8 and Proposition 17]{i}, it suffices to prove the Conjecture \ref{itoh's conjecture} for $d=3.$  Let $d=3$ and $\ov{e}_3(I)=0$ for an $\mm$-primary ideal in a Cohen-Macaulay ring $R.$ By \cite[Proposition 3]{i88} and \cite[Corollary 16 and (4.1)]{i}, $\ov{I^{n+2}}=I^n \ov{I^2}$ for every $n \geq 0$ if and only if $\ov{\R^{'}}(I)$ is Cohen-Macaulay.

It is not known whether the Itoh's conjecture is true for $\ov{I}=\mm$ in a Cohen-Macaulay local ring $R$ (which need not be Gorenstein). Recently, in \cite[Theorem 3.6]{cpr}, the  authors proved that the Conjecture \ref{itoh's conjecture} holds true for $\ov{I}=\mm$ in a Cohen-Macaulay local ring of type at most two.  T. T. Phuong \cite{P}, showed that if $R$ is an analytically unramified \CM local ring of dimension $d\geq 2$ then the equality $\ov{e}_1(I)=\ov{e}_0(I)-\lm(R/\ov I)+1$ leads to the vanishing of $\ov{e}_3(I)=0.$ In \cite{km}, authors generalised the result of \cite{cpr}. They also obtained following result for an arbitrary $\mm$-primary ideal $I$ in an analytically unramified Cohen-Macaulay local ring of dimension $3.$ 

\begin{theorem} \cite[Theorem 1.1]{km}
\label{theorem:itoh}
Let $(R,\mm)$ be an analytically unramified Cohen-Macaulay local ring of dimension $3.$ 
Let $\mathcal M=(t^{-1},\ov{\R^{'}}_+)$ and $ \ov{\R^{'}}=\bigoplus\limits_{n \in \ZZ } \overline{I^n}t^n.$ 
Suppose that $\bar e_3(I) = 0.$
Then
\begin{enumerate}

\item \label{enum:thmitohHthreelocalvanishes}
$\displaystyle H^3_{\mathcal M}(\ov{\R^{'}}) = 0$, 



\item \label{enum:thmitohEquichar}
Suppose either that $R$ is equicharacteristic or that $\overline{I} = \mm,$ and that $I$ has a reduction generated by $x,y,z.$ If $\ov{\R^{'}}$ is not Cohen-Macaulay, then $\displaystyle \ov{e}_2(I)- \lm\left(\frac{\overline{I^2}}{(x,y,z)\overline{I}}\right) \geq 3$.
\end{enumerate}
\end{theorem}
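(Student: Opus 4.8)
The overall strategy is to convert both statements into assertions about the graded components of the local cohomology of $\ov{\R'}$, combining the difference formula (Theorem \ref{r3}) with Itoh's vanishing theorem (Theorem \ref{vanishing of lc}). Throughout write $\R_+=\R(\fil)_+$ and $M=H^3_{\R_+}(\ov{\R'})$, fix the reduction $J=(x,y,z)$, and note $\sqrt{\R_+}=\sqrt{(xt,yt,zt)}$, so $H^i_{\R_+}(\ov{\R'})=0$ for $i>3$, while $t^{-1}$ is a nonzerodivisor on $\ov{\R'}\subseteq R[t,t^{-1}]$ with $\ov{\R'}/(t^{-1})\cong G(\fil)$. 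For Part (1), I would first evaluate Theorem \ref{r3} in degree $\n=0$. Since $\ov H(I,0)=\lambda(R/\overline{I^0})=0$ and, for $d=3$, the constant term of the normal Hilbert polynomial is $\ov P(I,0)=-\bar e_3(I)$, the formula reads $-\bar e_3(I)=\sum_{j\ge 0}(-1)^j h^j_{\R_+}(\ov{\R'})_0$. By Theorem \ref{vanishing of lc}(1),(3) the $j=0,1$ terms vanish; by (2),(3) the $j=2$ term vanishes because the degree $0\le 0$; and the $j\ge 4$ terms vanish by cohomological dimension. Hence $\bar e_3(I)=\lambda\big([M]_0\big)$, so the hypothesis $\bar e_3(I)=0$ is exactly $[M]_0=0$.

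Next I would pass from $\R_+$ to $\mathcal M=\R_+ +(t^{-1})$. Because $\Gamma_{\mathcal M}=\Gamma_{(t^{-1})}\circ\Gamma_{\R_+}$ and $(t^{-1})$ is principal, the Grothendieck spectral sequence degenerates to short exact sequences $0\to H^1_{(t^{-1})}(H^{i-1}_{\R_+}(\ov{\R'}))\to H^i_{\mathcal M}(\ov{\R'})\to H^0_{(t^{-1})}(H^i_{\R_+}(\ov{\R'}))\to 0$. For $i=3$ the left term is $H^1_{(t^{-1})}(H^2_{\mathcal M}(\ov{\R'}))$ by Theorem \ref{vanishing of lc}(3), and this is $0$ since $H^2_{\mathcal M}(\ov{\R'})$ is $\mathcal M$-torsion, hence $t^{-1}$-torsion, and $H^1_{(t^{-1})}$ of a $t^{-1}$-torsion module vanishes. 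Thus $H^3_{\mathcal M}(\ov{\R'})\cong H^0_{(t^{-1})}(M)$, the $t^{-1}$-torsion of $M$, and it suffices to show this torsion is concentrated in degree $0$, whence $[M]_0=0$ forces $H^3_{\mathcal M}(\ov{\R'})=0$. The concentration I would read off from the long exact sequence of $0\to\ov{\R'}(1)\xrightarrow{t^{-1}}\ov{\R'}\to G(\fil)\to 0$: the kernel of $t^{-1}\colon[M]_{n+1}\to[M]_n$ is the cokernel of $[H^2_{\R_+}(\ov{\R'})]_n\to[H^2_{G(\fil)_+}(G(\fil))]_n$, which for $n\le 0$ is $[H^2_{G(\fil)_+}(G(\fil))]_n$ by Theorem \ref{vanishing of lc}(2); one then shows these vanish off degree $0$ using Itoh's vanishing for $\ov{\R'}$ in negative degrees (\cite{i},\cite{i88}), the positive part being controlled by $[M]_n=0$ for $n\gg 0$ and downward injectivity of $t^{-1}$. \textbf{This concentration-in-degree-$0$ step is the main obstacle}: it is precisely where $M$ (equivalently $H^2_{G(\fil)_+}(G(\fil))$) must be controlled in every degree, not merely in degree $0$.

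For Part (2), Part (1) gives $H^3_{\mathcal M}(\ov{\R'})=0$, so together with $H^0_{\mathcal M}=H^1_{\mathcal M}=0$ the failure of $\ov{\R'}$ to be Cohen--Macaulay is equivalent to $H^2_{\mathcal M}(\ov{\R'})\neq 0$, a module living in degrees $\ge 1$ by Theorem \ref{vanishing of lc}(2). Writing $v_n=\lambda(\overline{I^{n+1}}/J\overline{I^{n}})$, so that $\lambda(\overline{I^2}/(x,y,z)\overline I)=v_1$, the plan is to express $\bar e_2(I)-v_1$ as an explicit sum of lengths of graded pieces of $H^2_{\mathcal M}(\ov{\R'})$: summing the difference formula over degrees and using $H^3_{\mathcal M}(\ov{\R'})=0$ collapses the Euler characteristic to a non-negative combination of the $h^2_{\mathcal M}(\ov{\R'})_n$ for $n\ge 1$, corrected by the normal analogues of the Huckaba--Marley length identities \cite{huckaba-marley}. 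Finally I would invoke the extra hypothesis (equicharacteristic, or $\overline I=\mm$, with the explicit reduction $x,y,z$) to pin down the structure of $H^2_{\mathcal M}(\ov{\R'})$ and show that, once nonzero, it contributes at least $3$ to this sum. \textbf{The delicate point in Part (2) is the exact constant $3$}: it requires showing that the minimal nonzero contribution of the defect module assembled from $H^2_{\mathcal M}(\ov{\R'})$ and the three-generated reduction equals $3$, and this is exactly where the structural hypotheses on $R$ and $I$ are indispensable.
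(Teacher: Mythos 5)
First, a point of order: this theorem is not proved in the paper at all. It is quoted verbatim from \cite[Theorem 1.1]{km} and used as a black box in the proof of the corollary that follows it, so there is no ``paper's own proof'' to compare your argument against; your proposal has to stand or fall on its own. Your entry points are correct and do match the way the surrounding text uses Itoh's machinery: evaluating Theorem \ref{r3} at $n=0$ together with Theorem \ref{vanishing of lc} gives $\ov{e}_3(I)=h^3_{\ov{\R^{'}}_+}(\ov{\R^{'}})_0$ (this is exactly equation (\ref{equation:e3}) in the proof of Corollary \ref{corollary:cpr}), and the identification $H^3_{\mathcal M}(\ov{\R^{'}})\cong\Gamma_{(t^{-1})}\bigl(H^3_{\ov{\R^{'}}_+}(\ov{\R^{'}})\bigr)$ via the composite/localization sequence is sound, since $H^2_{\ov{\R^{'}}_+}(\ov{\R^{'}})=H^2_{\mathcal M}(\ov{\R^{'}})$ is $t^{-1}$-torsion. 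But the step you yourself flag as the main obstacle --- that the $t^{-1}$-torsion of $M=H^3_{\ov{\R^{'}}_+}(\ov{\R^{'}})$ is concentrated in degree $0$ --- is a genuine gap, and the mechanism you sketch does not close it. The torsion is concentrated in degree $0$ precisely when $[H^3_{\mathcal M}(\ov{\R^{'}})]_n=0$ for all $n\neq 0$ \emph{unconditionally}, and nothing you cite delivers this: Itoh's Proposition 13 controls $H^2_{\mathcal M}$ in degrees $\leq 0$ but says nothing about $H^3_{\mathcal M}$ in negative degrees, and by your own computation the kernel of $t^{-1}\colon [M]_{n+1}\to[M]_n$ for $n\leq 0$ is all of $[H^2_{\ov{G}_+}(\ov{G})]_n$, which there is no reason to expect to vanish for $n<0$ (local cohomology of $\ov G$ is routinely nonzero in negative degrees). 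Moreover ``downward injectivity of $t^{-1}$ plus $[M]_n=0$ for $n\gg 0$'' propagates vanishing in the wrong direction: injectivity of the maps $[M]_{n+1}\to[M]_n$ transports vanishing \emph{upward}, not downward, so it cannot be combined with vanishing at the top to kill lower degrees. What is actually needed (and what \cite{km} supplies) is a global bound of the shape $\sum_n h^3_{\mathcal M}(\ov{\R^{'}})_n\leq \ov{e}_3(I)$, obtained by summing the difference formula over all degrees against the long exact sequence of $0\to\ov{\R^{'}}(1)\to\ov{\R^{'}}\to\ov{G}\to 0$; that inequality is the missing idea, and it is not a routine consequence of the results quoted in this paper.

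For part (2) the proposal is essentially a restatement of the goal rather than a proof. Saying that one should ``express $\ov{e}_2(I)-\lm\bigl(\ov{I^2}/(x,y,z)\ov{I}\bigr)$ as a non-negative combination of the $h^2_{\mathcal M}(\ov{\R^{'}})_n$'' and then ``invoke the extra hypothesis to show the contribution is at least $3$'' identifies neither where the constant $3$ comes from nor what the equicharacteristic (or $\ov{I}=\mm$) hypothesis is actually for. In \cite{km} the number $3$ is extracted from a structural analysis of the defect module built out of $H^2_{\mathcal M}(\ov{\R^{'}})$ and the three-generated reduction, and the hypothesis on $R$ is used to invoke a specific annihilation result for local cohomology that has no analogue in the general mixed-characteristic case; none of this is visible in, or recoverable from, your outline. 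So while the skeleton of part (1) is a reasonable reformulation, both of the theorem's actual assertions remain unproved in the proposal.
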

As a consequence they generalised \cite[Theorem 3.6]{cpr}.
\begin{corollary} \cite[Corollary 1.2]{km}
\label{corollary:cpr}
Let $(R,\mm)$ be an analytically unramified Cohen-Macaulay local ring of dimension $3.$ 
\begin{enumerate}
\item \label{corollary:part1}
Suppose $\ov{e}_3(I)=0.$ Then there is an inclusion $H^3_{\ov{\R^{'}}_+}(\ov{\R^{'}})_{-1} \subseteq 
(0 :_{\homology^3_\mm(R)} \overline{I}).$

\item \label{corollary:part2} Suppose $\ov{e}_3(\mm)=0.$ Then $\bar e_2(\mm) \leq \type(R).$ 

\item \label{corollary:part3}
$\ov{\R^{'}}(\mm)$ is Cohen-Macaulay if
$\bar e_2(\mm) \leq \length_R(\overline{I^2}/\mm I) + 2$
for any ideal $I$ such that $\overline{I} = \mm,\mbox{ } \ov{e}_3(\mm)=0$ and $I$ has a minimal reduction. 
\end{enumerate}
\end{corollary}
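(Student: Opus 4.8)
Throughout write $A=\ov{\R'}$, $A_+=\ov{\R'}_+$, and $\mathcal M=(t^{-1},A_+)$ for the maximal homogeneous ideal of $A$, and let $\ho$ denote the Hilbert function of the normal filtration $\{\ov{I^n}\}$, so $\ho(n)=\lm(R/\ov{I^n})$, which vanishes for $n\le 0$. The plan is to deduce all three parts from Theorem~\ref{theorem:itoh}, Itoh's vanishing result (Theorem~\ref{vanishing of lc}), and the difference formula for extended Rees algebras (Theorem~\ref{r3}) with $s=1$. The central tool is the long exact sequence attached to the relation $\sqrt{\mathcal M}=\sqrt{A_+ +(t^{-1})}$ (valid since $I$ is $\mm$-primary),
$$\cdots\longrightarrow H^i_{\mathcal M}(A)\longrightarrow H^i_{A_+}(A)\longrightarrow \big(H^i_{A_+}(A)\big)_{t^{-1}}\longrightarrow H^{i+1}_{\mathcal M}(A)\longrightarrow\cdots,$$
together with the identification $A_{t^{-1}}=R[t,t^{-1}]$. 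Since $A_+A_{t^{-1}}=\ov I\,R[t,t^{-1}]$ has radical $\mm R[t,t^{-1}]$, flat base change along $R\to R[t,t^{-1}]$ gives a graded isomorphism $\big(H^i_{A_+}(A)\big)_{t^{-1}}\cong H^i_\mm(R)[t,t^{-1}]$, whose degree-$m$ component is $H^i_\mm(R)\,t^m$.

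For part (1), assume $\ov e_3(I)=0$. Then Theorem~\ref{theorem:itoh}~(1) gives $H^3_{\mathcal M}(A)=0$, so the displayed sequence at $i=3$ makes $H^3_{A_+}(A)\hookrightarrow H^3_\mm(R)[t,t^{-1}]$ an $A$-linear graded injection; in degree $-1$ this is the asserted inclusion $[H^3_{A_+}(A)]_{-1}\hookrightarrow H^3_\mm(R)$. To locate the image I would evaluate Theorem~\ref{r3} at $n=0$: using $H^0_{A_+}(A)=H^1_{A_+}(A)=0$ and $[H^2_{A_+}(A)]_0=0$ from Theorem~\ref{vanishing of lc}, together with $\ov P(I,0)=-\ov e_3(I)$ and $\ho(0)=0$, I obtain $\lm[H^3_{A_+}(A)]_0=\ov e_3(I)=0$, hence $[H^3_{A_+}(A)]_0=0$. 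Now for $a\in\ov I$ the element $at\in A_1$ carries $[H^3_{A_+}(A)]_{-1}$ into $[H^3_{A_+}(A)]_0=0$; transporting this through the $A$-linear injection, where multiplication by $at$ acts as $\bar\xi\,t^{-1}\mapsto a\bar\xi\,t^0$, gives $a\bar\xi=0$ in $H^3_\mm(R)$ for every $\bar\xi$ in the image. Thus $[H^3_{A_+}(A)]_{-1}\subseteq(0:_{H^3_\mm(R)}\ov I)$.

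For part (2), take $I=\mm$, so that $\ov I=\mm$ and $\ov e_i(\mm)=\ov e_i(I)$. Evaluating Theorem~\ref{r3} at $n=-1$, again with $H^0_{A_+}(A)=H^1_{A_+}(A)=0$ and $[H^2_{A_+}(A)]_{-1}=0$, and using $\ov P(\mm,-1)=-\ov e_2(\mm)-\ov e_3(\mm)=-\ov e_2(\mm)$ and $\ho(-1)=0$, yields $\lm[H^3_{A_+}(A)]_{-1}=\ov e_2(\mm)$. On the other hand, part (1) applied to $\mm$ gives $[H^3_{A_+}(A)]_{-1}\subseteq(0:_{H^3_\mm(R)}\mm)=\operatorname{Soc}H^3_\mm(R)$, and the latter has length $\type(R)$ by local duality. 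Comparing lengths gives $\ov e_2(\mm)\le\type(R)$.

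For part (3), suppose to the contrary that $\ov{\R'}(\mm)$ is not Cohen--Macaulay. Since $\ov I=\mm$ and $I$ has a reduction $(x,y,z)$, Theorem~\ref{theorem:itoh}~(2) applies and gives $\ov e_2(I)-\lm(\ov{I^2}/(x,y,z)\ov I)\ge 3$. Here $(x,y,z)\ov I=(x,y,z)\mm\subseteq\mm I\subseteq\ov{I^2}$, so $\lm(\ov{I^2}/(x,y,z)\mm)=\lm(\ov{I^2}/\mm I)+\lm(\mm I/(x,y,z)\mm)\ge\lm(\ov{I^2}/\mm I)$, whence $\ov e_2(\mm)\ge\lm(\ov{I^2}/\mm I)+3$, contradicting the hypothesis $\ov e_2(\mm)\le\lm(\ov{I^2}/\mm I)+2$; hence $\ov{\R'}(\mm)$ is Cohen--Macaulay. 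The main obstacle throughout is the local cohomology bookkeeping of the first paragraph: correctly identifying $\big(H^3_{A_+}(A)\big)_{t^{-1}}$ with $H^3_\mm(R)[t,t^{-1}]$ and tracking the graded $A$-module structure so that the action of $at\in A_1$ genuinely implements multiplication by $a\in\ov I$ on $H^3_\mm(R)$. Once this is secured, the three parts follow formally by evaluating Theorem~\ref{r3} at $n=0$ and $n=-1$ and invoking Theorem~\ref{theorem:itoh}.
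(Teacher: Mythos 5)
Your proposal is correct and follows essentially the paper's own route: Theorem~\ref{theorem:itoh}(1) together with Itoh's exact sequence gives the graded embedding $[H^3_{\ov{\R^{'}}_+}(\ov{\R^{'}})]_{-1}\hookrightarrow H^3_{\mm}(R)$, the difference formula (Theorem~\ref{r3}) evaluated at $n=0$ and $n=-1$ gives $[H^3_{\ov{\R^{'}}_+}(\ov{\R^{'}})]_{0}=0$ and $\lm [H^3_{\ov{\R^{'}}_+}(\ov{\R^{'}})]_{-1}=\ov{e}_2(\mm)$, and part (3) is exactly Theorem~\ref{theorem:itoh}(2). The one (harmless) variation is in showing that $\ov{I}$ annihilates $[H^3_{\ov{\R^{'}}_+}(\ov{\R^{'}})]_{-1}$: the paper passes through $0\to\ov{\R^{'}}(1)\to\ov{\R^{'}}\to\ov{G}\to 0$ to identify this component with $[H^3_{\ov{G}_+}(\ov{G})]_{-1}$, an $R/\ov{I}$-module, whereas you track the action of the degree-one piece $\ov{I}t$ directly through the localization map; both arguments rest on the same vanishing in degree $0$.
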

\begin{proof}
\eqref{corollary:part1}: 
By Theorem \ref{theorem:itoh}, $H^3_{\mathcal M}(\ov{\R^{'}})=0.$ Hence, by \cite[Proposition 13(3)]{i}, we get an exact sequence
$$
0 \longrightarrow H^3_{\ov{\R^{'}}_+}(\ov{\R^{'}})_{-1} \longrightarrow H^3_{\mm}(R) \longrightarrow H^4_{\mathcal M}(\ov{\R^{'}})_{-1} \longrightarrow 0.
$$
Thus $H^3_{\ov{\R^{'}}_+}(\ov{\R^{'}})_{-1} \subseteq
{H^3_\mm(R)}.$
By the Difference Formula (Theorem \ref{r3}) and Theorem \ref{vanishing of lc}, we get 
\begin{equation}
 \label{equation:e3}
h^3_{\ov{\R^{'}}_+}(\ov{\R^{'}})_{0}=\bar{e}_3(I)=0. 
\end{equation}
Now consider the exact sequence 
 $$
 0 \longrightarrow \ov{\R^{'}}(1) \longrightarrow \ov{\R^{'}} \longrightarrow \ov{G}=\bigoplus\limits_{n\geq 0}\frac{\ov{I^n}}{\ov{I^{n+1}}} \longrightarrow 0 $$
which gives the long exact sequence 
$$ \cdots \longrightarrow H^i_{\ov{\R^{'}}_+}(\ov{\R^{'}})_{n+1} \longrightarrow H^i_{\ov{\R^{'}}_+}(\ov{\R^{'}})_{n}\longrightarrow H^i_{\ov{G}_+}(\ov{G})_{n} \longrightarrow \cdots. $$
Using \eqref{equation:e3}, we get an isomorphism 
$ H^3_{\ov{\R^{'}}_+}(\ov{\R^{'}})_{-1} \simeq H^3_{\ov{G}_+}(\ov{G})_{-1}.$
This implies that $ H^3_{\ov{\R^{'}}_+}(\ov{\R^{'}})_{-1}$ is an $R/\ov{I}$-module. Therefore 
$H^3_{\ov{\R^{'}}_+}(\ov{\R^{'}})_{-1} \subseteq 
(0 :_{\homology^3_\mm(R)} \overline{I}).$

\eqref{corollary:part2} 
Taking $I=\mm,$ by the Difference Formula (Theorem \ref{r3}) and Theorem \ref{vanishing of lc}, we get
$\bar{e}_2(I)=h^3_{\ov{\R^{'}}_+}(\ov{\R^{'}})_{-1}.$ Hence by \eqref{corollary:part1} we get 
the result.

\eqref{corollary:part3} 
Follows from \ref{theorem:itoh}\eqref{enum:thmitohEquichar}. 
 \end{proof}

The next result was first proved by Sally \cite[Proposition 5]{sa} for the filtration $\{\mm^n\}_{n\in\ZZ}$ and then by Johnston and Verma \cite[Proposition 3.3]{1jv} for the filtration $\{I^n\}_{n\in\ZZ}$ where $I$ is an $\mm$-primary ideal of $R.$ Here we prove the result for $\ZZ$-graded admissible filtrations.
\bp\label{co}
\label{formula for lc}
Let $(R,\mm)$ be a two-dimensional \CM local ring, $I$ be any $\mm$-primary ideal of $R$ and $\fil=\lbrace I_n\rbrace_{n\in{\ZZ}}$ an $I$-admissible filtration of ideals in $R.$ Then 
\ben{
\item $\displaystyle{\lm\lf H_{\R(\fil)_+}^2(\R(\fil)_0\rg=e_2(\fil)},$
\item $\displaystyle{\lm\lf H_{\R(\fil)_+}^2(\R(\fil))_1\rg=e_0(\fil)-e_1(\fil)+e_2(\fil)-\lm\lf\frac{R}{\breve {I_1}}\rg},$
\item $\displaystyle{\lm\lf H_{\R(\fil)_+}^2(\R(\fil))_{-1}\rg=e_1(\fil)+e_2(\fil)}.$}\een 
\ep
\bpf
We have \beqnn\label{b2}
\po(n)-\ho(n)=\sum\limits_{i\geq 0}(-1)^ih_{\R(\fil)_{+}}^i(\mathcal{R}'(\fil))_{n}\mbox{ for all }
 n\in\ZZ.\eeqnn
\\$(1)$ Putting  $n=0$ in  (\ref{b2}) and using Propositions \ref{result 2} and \ref{h1}, we get the required result.
\\$(2)$ Putting $n=1$ in  (\ref{b2}) and using Propositions \ref{result 2} and \ref{h1}, we get the required result.
\\$(3)$ Consider the short exact sequence of $\R(\fil)$-modules $$0\longrightarrow \R(\fil)_+\longrightarrow \R(\fil)\longrightarrow R\cong\R(\fil)/\R(\fil)_+\longrightarrow 0$$ which induces a long exact sequence of local cohomology modules whose $n$-th component is \beqn\cdots\longrightarrow H_{\R(\fil)_+}^i(\R(\fil)_+)_n\longrightarrow  H_{\R(\fil)_+}^i(\R(\fil))_n \longrightarrow  H_{\R(\fil)_+}^i(R)_n\longrightarrow  \cdots\mbox{ for all }i\geq 0.\eeqn Since $R$ is  $\R(\fil)_+$-torsion, $H_{\R(\fil)_+}^0(R)=R$ and $H_{\R(\fil)_+}^i(R)=0$ for all $i\geq 1.$ Hence $H_{\R(\fil)_+}^i(\R(\fil)_+)\cong  H_{\R(\fil)_+}^i(\R(\fil))$ for all $i\geq 2$ and we have the exact sequence 
\beqn 0\rightarrow H_{\R(\fil)_+}^0(\R(\fil)_+)_n\rightarrow  H_{\R(\fil)_+}^0(\R(\fil))_n \rightarrow  R\rightarrow\eeqn\beqnn\label{poo} H_{\R(\fil)_+}^1(\R(\fil)_+)_n\rightarrow H_{\R(\fil)_+}^1(\R(\fil))_n\rightarrow 0. \eeqnn 
\\The  short exact sequence of $\R(\fil)$-modules $$0\longrightarrow \R(\fil)_+(1)\longrightarrow \R(\fil)\longrightarrow G(\fil)\longrightarrow 0$$ induces the exact sequence 
\beqn 0\longrightarrow H_{\R(\fil)_+}^0(G(\fil))_{-1}\rightarrow  H_{\R(\fil)_+}^1(\R(\fil)_+)_0 \rightarrow H_{\R(\fil)_+}^1(\R(\fil))_{-1}\rightarrow H_{\R(\fil)_+}^1(G(\fil))_{-1}\rightarrow\eeqn\beqnn\label{too} H_{\R(\fil)_+}^2(\R(\fil))_0\rightarrow H_{\R(\fil)_+}^2(\R(\fil))_{-1}\rightarrow H_{\R(\fil)_+}^2(G(\fil))_{-1}\rightarrow 0 .\eeqnn Now $H_{\R(\fil)_+}^0(G(\fil))\subseteq G(\fil)$ are nonzero only in nonnegative  degrees. Thus $H_{\R(\fil)_+}^1(\R(\fil))_{-1}\cong R$ and $H_{\R(\fil)_+}^1(\R(\fil))_{0}=0$ by Proposition \ref{h222}. Therefore from the exact sequence (\ref{poo}), we get the exact sequence $$0\rightarrow R\rightarrow  H_{\R(\fil)_+}^1(\R(\fil)_+)_0 \rightarrow H_{\R(\fil)_+}^1(\R(\fil))_{0}=0.$$ Let $f$ denote the map from $H_{\R(\fil)_+}^1(\R(\fil))_{-1}$ to $H_{
\R(\fil)_+}^0(G(\fil))_{-1}$ in the exact sequence (\ref{too}). First we prove that $f$ is zero map. From the exact sequence (\ref{too}), we get the exact sequence $$0\longrightarrow R\overset{g}\longrightarrow R\overset{f}\longrightarrow H_{\R(\fil)_+}^1(G(\fil))_{-1}.$$ Since $R/g(R)$ is contained in $H_{\R(\fil)_+}^1(G(\fil))_{-1}$ and by Proposition \ref{vl}, $H_{\R(\fil)_+}^1(G(\fil))_{-1}$ is of finite length, we have $\lm_{R}\lf R/g(R)\rg$ is finite. Since $g(R)$ is principal ideal in $R,$ we get $R=g(R).$ Therefore $f$ is the zero map. Hence we get the exact sequence $$0\rightarrow H_{\R(\fil)_+}^1(G(\fil))_{-1}\rightarrow H_{\R(\fil)_+}^2(\R(\fil))_0\rightarrow H_{\R(\fil)_+}^2(\R(\fil))_{-1}\rightarrow H_{\R(\fil)_+}^2(G(\fil))_{-1} \rightarrow 0.$$ Therefore by  Theorem \ref{GS}, we get
\beqn
[\ho(0)-\ho(-1)]-[\po(0)-\po(-1)]&=& -\lm\lf H_{\R(\fil)_+}^1(G(\fil))_{-1}\rg+\lm\lf H_{\R(\fil)_+}^2(G(\fil))_{-1}\rg\\&=& -\lm\lf H_{\R(\fil)_+}^2(\R(\fil))_{0}\rg+\lm\lf H_{\R(\fil)_+}^2(\R(\fil))_{-1}\rg.
\eeqn
Thus by part $(1)$ of the Proposition, we get $$\lm\lf H_{\R(\fil)_+}^2(\R(\fil))_{-1}\rg=e_2(\fil)-e_2(\fil)+e_1(\fil)+e_2(\fil)=e_1(\fil)+e_2(\fil).$$

\eepf

\section{Huneke-Ooishi Theorem and a multi-graded version} 
In this section we give an application of the GSF to derive a result of Huneke \cite{huneke} and Ooishi \cite{ooishi} which states that if $(R,\mm)$ is a \CM local ring of dimension $d\geq 1$ and $I$ is an $\mm$-primary ideal then $e_0(I)-e_1(I)=\lambda(R/I)$ if and only if $r(I)\leq 1.$ A similar result for admissible filtrations  was proved in \cite[Theorem 4.3.6]{blancafort thesis} and \cite[Corollary 4.9]{huckaba-marley}. In \cite[Theorem 5.5]{msv}, authors gave a partial generalization of this result for an $\I$-admissible filtration.  First we prove few preliminary results needed.

\bl{\em(Sally machine)}\cite[Corollary 2.4]{sa1} \cite[Lemma 2.2]{huckaba-marley}\label{sm}
Let $(R,\mm)$ be a Noetherian local ring, $I_1$ an $\mm$-primary ideal in $R$ and $\fil=\lbrace I_n\rbrace_{n\in{\ZZ}}$ be an $I_1$-admissible filtration of ideals in $R.$ Let $x_1,\dots,x_r$ be a superficial sequence for $\fil.$ If $\grade G(\fil/(x_1,\dots,x_r))_+\geq 1$ then $\grade G(\fil)_+\geq r+1.$
\bpf
We use induction on $r.$ Let $r=1$ and $y\in I_t$ such that image of $y$ in $G(\fil/(x_1))_t$ is a nonzerodivisor. Then $(I_{n+tj}:y^j)\subseteq(I_n,x_1)$ for all $n,j.$ Since $x_1$ is a superficial element for $\fil,$ there exists integer $c\geq 0,$ such that $(I_{n+j}:x_1^j)\cap I_c=I_n$ for all $j\geq 1$ and $n\geq c.$ Consider an integer $p>c/t.$ For arbitrary $n$ and $j\geq 1,$ we prove that  $$y^p(I_{n+j}:x_1^j)\subseteq (I_{n+j+tp}:x_1^j)\cap I_c=I_{n+tp}.$$ Let $a\in(I_{n+j}:x_1^j).$ Then $ay^px_1^j\in I_{n+j+tp}.$ Since $pt>c,$ $ay^p\in (I_{n+j+tp}:x_1^j)\cap I_c=I_{n+tp}.$ Therefore $$(I_{n+j}:x_1^j)\subseteq (I_{n+tp}:y^p)\subseteq (I_n,x_1).$$ Thus $(I_{n+j}:x_1^j)=I_n+x_1(I_{n+j}:x_1^{j+1})$ for all $n$ and $j\geq 1.$ Iterating this formula $n$ times, we get $$(I_{n+j}:x_1^j)=I_n+x_1I_{n-1}+x_1^2I_{n-2}+\cdots+x_1^n(I_{n+j}:x_1^{j+n})=I_n.$$ Hence $x_1^*=x_1+I_2$ is a nonzerodivisor of $G(\fil).$ Since $G(\fil)/(x_1^*)\simeq G(\fil/(x_1)),$ $\grade G(\fil)_+\geq 2.$
\\Now assume $r\geq 2.$ Then by $r=1$ case, we have $\grade G(\fil/(x_1,\dots,x_{r-1}))_+\geq 2>1.$ By induction on $r,$ we have $\grade G(\fil)_+\geq r$ and since $x_1,\dots,x_r$ is a superficial sequence for $\fil,$ by Lemma \ref{hhh}, we obtain $x_1^*,\ldots,x_r^*$ is a regular sequence of $G(\fil).$ Since $G(\fil)/(x_1^*,\ldots,x_r^*)\simeq G(\fil/(x_1,\dots,x_r)),$ $\grade G(\fil)_+\geq r+1.$
\eepf 
\el
The next lemma is due to Marley \cite[Lemma 3.14]{marleythesis}.
\begin{lemma}\label{posn1}
Let $(R,\mm)$ be a \CM local ring of dimension $d\geq 1,$ $I$ an $\mm$-primary ideal and $\fil=\{I_n\}_{n\in \ZZ}$ be an $I$-admissible filtration of ideals in $R. $ Suppose $x\in I_1\setminus I_2$ such that $x^*=x+I_2$ is a nonzerodivisor in $G(\fil).$ Let $R^\prime=R/(x).$ Then $n(\fil)=n(\fil^\prime)-1$ where $\fil^\prime=\{I_n R^\prime\}_{n\in\ZZ}.$
\end{lemma}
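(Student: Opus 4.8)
The plan is to compare the two Hilbert functions directly, exploiting the fact that passing to $R' = R/(x)$ amounts to taking a first difference. The only genuine input needed is to convert the hypothesis that $x^* = x+I_2$ is a nonzerodivisor in $G(\fil)$ into the colon-ideal identity $(I_n : x) = I_{n-1}$ for every $n \in \ZZ$. This is precisely the computation already carried out in the proof of Lemma \ref{redn1}, and for $n \leq 0$ it is automatic since there $I_n = R.$ Granting it, the kernel of multiplication by $x$ on $R/I_{n-1}$ vanishes, so for every $n$ one has the short exact sequence
$$0 \longrightarrow R/I_{n-1} \overset{\cdot x}{\longrightarrow} R/I_n \longrightarrow R'/I_n R' \longrightarrow 0,$$
where the kernel is $(I_n:x)/I_{n-1} = 0$ and the cokernel is $R/(I_n+(x)) \cong R'/I_nR'.$

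Taking lengths in this sequence, I would deduce $H_{\fil'}(n) = \ho(n) - \ho(n-1)$ for all $n \in \ZZ,$ where $H_{\fil'}(n) = \lm(R'/I_nR').$ Since both sides agree with their respective Hilbert polynomials for $n \gg 0,$ and two polynomials that agree for all large $n$ coincide, the polynomial identity $P_{\fil'}(n) = \po(n) - \po(n-1)$ holds for every $n.$ Subtracting, and setting $g(n) := \po(n) - \ho(n),$ I obtain
$$P_{\fil'}(n) - H_{\fil'}(n) = \big(\po(n)-\po(n-1)\big) - \big(\ho(n)-\ho(n-1)\big) = g(n) - g(n-1).$$

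Finally I would read off the two postulation numbers from the support of $g.$ By the definition of $n(\fil)$ one has $g(m) = 0$ for all $m > n(\fil)$ while $g(n(\fil)) \neq 0.$ Hence $g(n)-g(n-1) = 0$ whenever $n > n(\fil)+1,$ whereas $g(n(\fil)+1) - g(n(\fil)) = -g(n(\fil)) \neq 0.$ Thus $P_{\fil'} - H_{\fil'}$ vanishes strictly above $n(\fil)+1$ and is nonzero at $n(\fil)+1,$ which is exactly the statement that $n(\fil') = n(\fil)+1,$ i.e.\ $n(\fil) = n(\fil')-1.$ I expect no serious obstacle: once the colon identity $(I_n:x)=I_{n-1}$ is in hand the argument is entirely formal, and the only points deserving a little care are the behaviour at non-positive $n$ (where $I_n = R$) and the bookkeeping that isolates the largest index at which the first difference $g(n)-g(n-1)$ is nonzero.
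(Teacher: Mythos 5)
Your proposal is correct and follows essentially the same route as the paper: the colon identity $(I_n:x)=I_{n-1}$ from the nonzerodivisor hypothesis, the resulting exact sequence giving $H_{\fil'}(n)=H_{\fil}(n)-H_{\fil}(n-1)$ and hence $P_{\fil'}(n)=P_{\fil}(n)-P_{\fil}(n-1)$, and then the first-difference bookkeeping at $n(\fil)+1$. The only cosmetic difference is that you pass directly to the three-term short exact sequence, while the paper writes the four-term sequence and identifies the kernel as $(I_n:x)/I_n=I_{n-1}/I_n$.
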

\bpf
We use the notation `` $'$ " to denote the image in $R^\prime.$ For all $n,$ consider the following short exact sequence of $R$-modules
$$0\longrightarrow (I_n:x)/I_n\longrightarrow R/I_n\overset{.x}\longrightarrow R/I_n\longrightarrow R'/I_n'\longrightarrow 0.$$ Therefore $H_{\fil'}(n)=\lm( R'/I_n')=\lm((I_n:x)/I_n).$ Since $x^*$ is a nonzerodivisor in $G(\fil),$ we have $(I_{n+1}:x)=I_n$ for all $n.$ Hence $H_{\fil'}(n)=\lm(I_{n-1}/I_n)=\lm(R/I_n)-\lm(R/I_{n-1})=H_{\fil}(n)-H_{\fil}(n-1)$ for all $n$ which implies  $P_{\fil'}(n)=P_{\fil}(n)-P_{\fil}(n-1)$ for all $n.$ Thus $H_{\fil'}(n)=P_{\fil'}(n)$ for all $n\geq n(\fil)+2.$ Since \beqn P_{\fil'}(n(\fil)+1)-H_{\fil'}(n(\fil)+1)&=&[P_{\fil}(n(\fil)+1)-H_{\fil}(n(\fil)+1)]-[P_{\fil}(n(\fil))-H_{\fil}(n(\fil))]\\&=&-[P_{\fil}(n(\fil))-H_{\fil}(n(\fil))]\neq 0,\eeqn we get the required result.
\eepf
The next theorem is due to Blancafort \cite{blancafort thesis} which is a generalization of a result of Huneke \cite{huneke} and Ooish \cite{ooishi} proved independently. We make use of reduction number and postulation number of admissible filtration of ideals to simplify her proof.
\bt\cite[Theorem 4.3.6]{blancafort thesis}\label{HUOI}
Let $(R,\mm)$ be a \CM local ring with infinite residue field of dimension $d\geq 1,$ $I_1$ an $\mm$-primary ideal and $\fil=\lbrace I_n\rbrace_{n\in{\ZZ}}$ be an $I_1$-admissible filtration of ideals in $R.$ Then the following are equivalent:
\ben{
\item $e_0(\fil)-e_1(\fil)=\lm\lf{R}/{I_1}\rg,$
\item $r(\fil)\leq 1.$
}\een
In this case, $e_2(\fil)=\cdots=e_d(\fil)=0,$ $G(\fil)$ is Cohen-Macaulay, $ n(\fil) \leq 0,$ $r(\fil)$ is independent of the reduction chosen and  $\fil=\lbrace I_1^n\rbrace .$
\et
\bpf
$(1)\Rightarrow(2)$ We use induction on $d.$ Let $d=1.$  For all $n\in\ZZ,$ we have $$\po(n)-\ho(n)=-h_{\R(\fil)_+}^1(\R^\prime(\fil))_n.$$ By putting $n=1$ in this formula, we get  $e_0(\fil)-e_1(\fil)-\lm\lf{R}/{I_1}\rg=-h_{\R(\fil)_+}^1(\R^\prime(\fil))_1=0.$ Therefore by Lemma \ref{result 4}, for all $n\geq 1,$ $h_{\R(\fil)_+}^1\R^\prime(\fil)_n=0.$ Consider the short exact sequence of $\R(\fil)$-modules, 
\beqn
0\longrightarrow \mathcal{R}'(\fil)(1)\overset{t^{-1}}\longrightarrow \mathcal{R}'(\fil)\longrightarrow G(\fil)
\longrightarrow 0.\eeqn This induces a long exact sequence , 
$$
 0\longrightarrow [H_{{\R(\fil)}_{+}}^0( G(\fil))]_{n} \longrightarrow [H_{{\R(\fil)}_{+}}^1(\mathcal{R}'(\fil))]_{n+1} 
 \longrightarrow\cdots.$$ 
Thus for all $n\in\NN,$ $[H_{{\R(\fil)}_{+}}^0( G(\fil))]_{n}=0.$ Hence $G(\fil)$ is Cohen-Macaulay. Let $J=(x)$ be a minimal reduction of $\fil.$ 
 \wlg $x$ is superficial. 
For each $n,$ consider the following map  
$$\displaystyle\frac{I_{k+n}}{x^k I_{n}}\overset{\phi_k}\longrightarrow \frac{I_{k+n+1}}{x^{k+1} I_{n}}\mbox{ where }\phi_k(\ov z)=\ov {xz}.$$ For all large $k,$ $I_{k+n+1}=x I_{k+n}.$ Hence for all large $k,$ $\phi_k$ is surjective. Now suppose $\phi_k(\ov z)=0$ for some $\ov z\in {I_{k+n}}/{x^k I_{n}}.$ Then $x z\in x^{k+1} I_{n}.$ Therefore $xz=x^{k+1}a$ where $a\in I_n,$ hence $z\in x^kI_{n}.$ Thus for all large $k,$ $\phi_k$ is injective. Therefore by Proposition \ref{hlcr}, for all large $k,$ $$H_{\R(\fil)_+}^1(\R(\fil))_n\simeq \frac{I_{k+n}}{x^k I_{n}}.$$
 \\By Lemma \ref{result 4} and Proposition \ref{result 2}, $H_{\R(\fil)_+}^1(\R(\fil))_n=0$ for all $n\geq 1.$ Then for all large $k$ and $n\geq 1,$ $$I_{k+n}=x^k I_{n}.$$ Let $a\in I_{k+n-1}.$ Then $xa\in I_{k+n}\subseteq x^kI_n$ implies $a\in x^{k-1} I_{n}.$ Thus $I_{k+n-1}=x^{k-1} I_{n}.$ Using this procedure repeatedly we get $I_{n+1}=x I_{n}.$ Thus $r(\fil)\leq 1.$
 \\Let $d \geq 2$ and $x\in I_1$ be a superficial element for $\fil.$ Let $R^\prime ={R}/{(x)},$ $\fil^\prime=\{I_n R^\prime\}_{n\in\ZZ}$ and $G^{\prime}=G(\fil^\prime).$ Since $e_i(\fil)=e_i(\fil^\prime)$ for all $i<d,$ we have $$e_0(\fil^\prime)-e_1(\fil^\prime)=e_0(\fil)-e_1(\fil)=\lm\lf\frac{R}{I_1}\rg=\lm\lf\frac{R^\prime}{I_1R^\prime}\rg.$$ Hence by induction hypothesis, $G^\prime$ is Cohen-Macaulay. Therefore by Sally machine (Lemma \ref{sm}), $G(\fil)$ is Cohen-Macaulay. This implies that for any minimal reduction $J$ of $\fil,$ $r_J(\fil)=n(\fil)+d$ by Theorem \ref{relating pn and rn}. Thus $r_J(\fil)$ is independent of  the minimal reduction $J$ of $I.$ Let $J$ be a minimal reduction of $\fil$ generated by superficial sequence $x_1,\ldots,x_d.$ Let $\ov R=R/(x_1,\ldots,x_{d-1})$ and $\ov \fil=\{I_n\ov R\}_{n\in\ZZ}.$ Since $G(\fil)$ is \CM and $x_1,\ldots,x_d$ is superficial, using Theorem \ref{relating pn and rn} and Lemmas \ref{hhh}, \ref{redn1} and \ref{posn1}, for $d-1$ times, by induction hypothesis we get $$r(\fil)=n(\fil)+d=n(\ov\fil)+1=r(\ov\fil)\leq 1.$$
  $(2)\Rightarrow(1)$ Let $J$ be a minimal reduction of $\fil$ such that $r(\fil)=r_J(\fil)$ and $J$ is generated by superficial sequence $x_1,\ldots,x_d.$ Let $R^\prime=R/(x_1,\ldots,x_{d-1})$ and $\fil^\prime=\{I_nR^\prime\}_{n\in\ZZ}.$ Then $x_dI_nR^\prime=I_{n+1}R^\prime$ for all $n\geq 1.$ Since $x_d^\prime$ is nonzerodivisor, $(I_{n+1}R^\prime:x_d^\prime)=I_{n}R^\prime$ for all $n\geq 1.$ Therefore $(x_d^\prime)^*$ (the image of $x_d^\prime$ in $G(\fil^\prime)$) is nonzerodivisor in $G(\fil^\prime).$ Hence $G(\fil^\prime)$ is Cohen-Macaulay. Thus by Lemma \ref{sm}, $G(\fil)$ is Cohen-Macaulay. Therefore by Theorem \ref{relating pn and rn}, $n(\fil)=r(\fil)-d\leq 0.$
  Hence $\po(n)=\ho(n)$ for all $n > 0.$ By putting $n=1$ for $d=1$ case we obtain $e_0(\fil)-e_1(\fil)=\lm(R/I_1).$
\\Now we prove that if $r(\fil)\leq 1$ then $e_2(\fil)=\cdots=e_d(\fil)=0.$ \wlg assume $d\geq 2.$ The condition $r(\fil)\leq 1$ implies $G(\fil)$ is Cohen-Macaulay and $n(\fil)=r(\fil)-d<0.$ Let $d=2.$ Therefore $e_2(\fil)=\po(0)-\ho(0)=0.$ Now assume $d\geq 3$ and the result is true upto dimension $d-1.$ Let $J$ be minimal reduction of $\fil$ generated by superficial sequence $x_1,\ldots,x_d.$ Let $R^\prime=R/(x_1,\ldots,x_{d-1})$ and $\fil^\prime=\{I_nR^\prime\}_{n\in\ZZ}.$ Then $e_i(\fil)=e_i(\fil^\prime)=0$ for all $0\leq i<d.$ Since $G(\fil)$ is \CM and $n(\fil)=r(\fil)-d< 0,$ we get $(-1)^de_d(\fil)=\po(0)-\ho(0)=0.$ Therefore $e_0(\fil)-e_1(\fil)-\lm\lf{R}/{I_1}\rg=\po(1)-\ho(1)=0.$
 \\ Let $J$ be a minimal reduction of $\fil$ such that $r(\fil)=r_J(\fil)$ and $r(\fil)\leq 1.$ Then $I_{2}=JI_{1}\subseteq I_1^2\subseteq I_2.$ Suppose $I_r=I_1^r$ for all $1\leq r\leq n.$ Then $I_{n+1}=JI_{n}\subseteq I_1I_1^n\subseteq I_1^{n+1}\subseteq I_{n+1} .$ Thus $\fil$ is $\lbrace I_1^n\rbrace_{n\in{\ZZ}}.$
\eepf

\begin{theorem}\cite[Theorem 5.5]{msv}\label{huneke-ooishi dimension d}
Let $(R,\mm)$ be a Cohen-Macaulay local ring of dimension $d\geq 1$ and $\idl$ be $\mm$-primary ideals of $R.$ Let $\fil=\lbrace\fil(\n)\rbrace_{\n\in \ZZ^s}$ be an $\I$-admissible filtration of ideals in $R.$ Then for all $\ii,$\\
$(1)$ $e_{(d-1)e_i}(\mathcal F) \geq e_1(\mathcal F^{(i)}),$\\
 $(2)$ $e(I_i) - e_{(d-1)e_i}(\mathcal F) \leq \lm(R/\mathcal F{(e_i)}),$\\
 $(3)$ $e(I_i) - e_{(d-1)e_i}(\mathcal F) = \lm(R/\mathcal F{(e_i)})$
 if and only if 
$r(\mathcal F^{(i)}) \leq 1$ and $e_{(d-1)e_i}(\mathcal F)=e_1(\mathcal F^{(i)}).$
\end{theorem}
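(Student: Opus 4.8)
The plan is to reduce the whole statement to the one--dimensional filtration obtained by slicing $\fil$ along the $i$th coordinate axis, namely $\fil^{(i)}=\{\fil(ne_i)\}_{n\in\ZZ}$. Since $\mathcal R'(\fil)$ is a finite $\mathcal R'(\I)$-module, its $i$th axial piece $\bigoplus_{n}\fil(ne_i)\underline{t}^{ne_i}$ is finite over $\mathcal R'(I_i)$, so $\fil^{(i)}$ is a genuine $I_i$-admissible $\ZZ$-graded filtration with $\fil^{(i)}_1=\fil(e_i)$; being admissible it satisfies $e_0(\fil^{(i)})=e(I_i)$, which also equals $e_{de_i}(\fil)$ by Rees' theorem on top mixed multiplicities ($e_\alpha(\fil)=e_\alpha(\I)$ for $|\alpha|=d$). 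The heart of the matter is the inequality
\[ e_1(\fil^{(i)})\ \le\ e_{(d-1)e_i}(\fil), \]
which I will call $(\ast)$. The point that makes $(\ast)$ nontrivial, and which I would stress at the outset, is that the restriction $\po(ne_i)$ of the multigraded Hilbert polynomial to the axis is \emph{not} the Hilbert polynomial $P_{\fil^{(i)}}(n)$ of $\fil^{(i)}$: the two agree with $\ho(ne_i)$ only asymptotically in the single variable $n$, they share only the leading term $e(I_i)n^{d}/d!$, and their difference in the next coefficient is precisely the content of $(\ast)$, which the Grothendieck--Serre formula expresses through the local cohomology of $\mathcal R'(\fil)$ evaluated along the axis.

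I would prove $(\ast)$ by induction on $d$, after passing (harmlessly) to $R[t]_{\mm[t]}$ so that the residue field is infinite. For the inductive step $d\ge2$, choose by Rees' Lemma (Lemma \ref{one}) and Theorem \ref{eleven} a superficial element $x\in I_i$ for $\fil$ that is a nonzerodivisor and satisfies $\fil(\n)\cap(x)=x\fil(\n-e_i)$ for $n_i\gg0$; put $R'=R/(x)$ and $\fil'=\{\fil(\n)R'\}$. Exactly as in the proof of Theorem \ref{all}, $P_{\fil'}(\n)=\po(\n)-\po(\n-e_i)$ for large $\n$, and the coefficient--shift identity established there gives $e_{(d-1)e_i}(\fil)=e_{(d-2)e_i}(\fil')$. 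On the axis $x$ is superficial for $\fil^{(i)}$ and $(\fil^{(i)})'=(\fil')^{(i)}$, so Marley's invariance of the lower Hilbert coefficients under a superficial reduction (valid since $1<d$) yields $e_1(\fil^{(i)})=e_1((\fil')^{(i)})$. Thus $(\ast)$ for $(\fil,d)$ follows at once from $(\ast)$ for $(\fil',d-1)$.

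The base case $d=1$, where $(d-1)e_i=\underline0$ and $(\ast)$ reads $e_1(\fil^{(i)})\le e_{\underline0}(\fil)$, is the step I expect to carry the real weight, and it is resolved by a sign computation. Here $\po(ne_i)=e(I_i)n-e_{\underline0}(\fil)$ and, for large $n$, $\ho(ne_i)=P_{\fil^{(i)}}(n)=e(I_i)n-e_1(\fil^{(i)})$, so the Grothendieck--Serre formula (Theorem \ref{r3}) gives
\[ e_1(\fil^{(i)})-e_{\underline0}(\fil)=\po(ne_i)-\ho(ne_i)=\sum_{j\ge0}(-1)^j h_{\R(\I)_{++}}^j(\mathcal{R}'(\fil))_{ne_i}. \]
Now when $d=1$ Theorem \ref{eleven} produces a single $y_1=x_{11}\cdots x_{s1}$ with $\sqrt{\R(\I)_{++}}=\sqrt{(y_1\underline{t})}$, so $H_{\R(\I)_{++}}^j(\mathcal R'(\fil))=0$ for $j\ge2$; moreover $y_1$ is a nonzerodivisor (the $\mm$-primary ideals have positive grade in the \CM ring), hence $y_1\underline{t}$ is a nonzerodivisor on $\mathcal R'(\fil)$ and $H^0_{\R(\I)_{++}}(\mathcal R'(\fil))=0$. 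The alternating sum therefore collapses to $-h_{\R(\I)_{++}}^1(\mathcal R'(\fil))_{ne_i}\le0$, which is exactly $(\ast)$ for $d=1$. The delicate ingredient is thus the recognition that along a single axis in dimension one $\R(\I)_{++}$ is, up to radical, generated by one element, so that only $H^1$ survives and fixes the sign.

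Granting $(\ast)$, the remaining parts are formal. Applying Northcott's inequality (Theorem \ref{n1}) to the single--graded filtration $\fil^{(i)}$ gives the chain
\[ e(I_i)-\lm\lf R/\fil(e_i)\rg=e_0(\fil^{(i)})-\lm\lf R/\fil^{(i)}_1\rg\le e_1(\fil^{(i)})\le e_{(d-1)e_i}(\fil), \]
whose outer inequality is precisely part (2). Equality in (2) forces equality throughout: the right--hand equality is $(\ast)$ being tight, $e_{(d-1)e_i}(\fil)=e_1(\fil^{(i)})$, while the left--hand equality is the Huneke--Ooishi condition $e_0(\fil^{(i)})-e_1(\fil^{(i)})=\lm(R/\fil^{(i)}_1)$, equivalent by Theorem \ref{HUOI} to $r(\fil^{(i)})\le1$. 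Conversely, these two conditions reinstate equality in (2), which proves part (3) and completes the argument.
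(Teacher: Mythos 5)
Your proposal is correct and follows essentially the same route as the paper: part (1) by induction on $d$, with the base case $d=1$ settled by the Grothendieck--Serre formula for $\R'(\fil)$ evaluated along the $i$th axis (where only $-h^1\le 0$ survives) and the inductive step via a Rees'-Lemma element $x\in I_i$ giving $P_{\fil'}(\n)=\po(\n)-\po(\n-e_i)$ and hence $e_{(d-1)e_i}(\fil)=e_{(d-2)e_i}(\fil')$, $e_1(\fil^{(i)})=e_1((\fil')^{(i)})$; then part (2) from Northcott's inequality applied to $\fil^{(i)}$ and part (3) by squeezing together with Theorem \ref{HUOI}. Your explicit justification that only $H^1$ survives in the base case is a point the paper leaves implicit, but the argument is the same.
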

\bpf
$(1)$ 
We apply induction on $d$. Let $d=1$. Then by Theorem \ref{r3}, 
 \begin{eqnarray*}
 P_{\mathcal F}(re_i)-\lm(R/\mathcal F{(re_i)})=
 -\lm_{R}[H^1_{\mathcal{R}_{++}}(\mathcal{R}^\prime
 (\mathcal F))]_{(re_i)} \mbox{ for all } r \geq 0. 
\end{eqnarray*}
Since $\mathcal F^{(i)}$ is $I_i$-admissible, we have $e(\mathcal F^{(i)})=e(I_i).$ Hence using $P_{\mathcal F^{(i)}}(r)=e(I_i)r-e_1(\mathcal F^{(i)}),$ we get
\begin{eqnarray*}
 P_{\mathcal F^{(i)}}(r)-\lm(R/\mathcal F{(re_i)})+[e_1(\mathcal F^{(i)})-e_{\underline 0}(\mathcal F)]=
-\lm_{R}[H^1_{\mathcal{R}_{++}}(\mathcal{R}^\prime(\mathcal F))]_{(re_i)} \leq 0.
\end{eqnarray*} 
Taking $r \gg 0$, we get $e_{\underline 0}(\mathcal F)\geq e_1(\mathcal F^{(i)}).$ 
Let $d \geq 2.$ \wlg we may assume that the residue field of $R$ is infinite. By Lemma \ref{one}, there exists a nonzerodivisor $x_i \in I_i$ such that 
$$(x_i) \cap \mathcal F{(\n)}=x_i\mathcal F{(\n-e_i)}\mbox{ for }\n\in \NN^s\mbox{ where }n_i \gg 0.$$
Let $R^\prime=R/(x_i)$ and 
$\mathcal F^\prime=\{\mathcal F{(\n)}R^\prime\}$ and $\mathcal F^{\prime(i)}=\{\mathcal F{(ne_i)}R^\prime\}.$ For all $\n\in\NN^s$ such that $n_i \gg 0,$ consider the following exact sequence
$$
0\longrightarrow\frac{(\fil(\n):(x_i))}{\fil(\n-e_i)}\longrightarrow\frac{R}{\fil(\n-e_i)}\overset{.x_i}\longrightarrow\frac{R}{\fil(\n)}\longrightarrow\frac{R}{(x_i,\fil(\n))}\longrightarrow 0.$$ 
Since for all $\n\in\NN^s$ where $n_i \gg 0,$ $(\fil(\n):(x_i))=\fil(\n-e_i),$ we get 
$H_{\fil^\prime}(\n)=\ho(\n)-\ho(\n-e_i)$ and hence $\po(\n)-\po(\n-e_i)=P_{\fil^\prime}(\n).$ Therefore 
$e_{(d-2)e_i}(\mathcal F^\prime)=e_{(d-1)e_i}(\mathcal F)$ and $e_1(\mathcal F^{\prime(i)})=
e_1(\mathcal F^{(i)}).$ Therefore by induction, the result follows. 
\\$(2)$ Using part $(1),$ for all $\ii,$ we have
 $$e(I_i) - e_{(d-1)e_i}(\mathcal F) \leq e(I_i)-e_1(\mathcal F^{(i)})\leq \lm(R/\mathcal F{(e_i)})$$
 where the last inequality follows from Theorem \ref{n1}. 
\\$(3)$ Let $e(I_i) - e_{(d-1)e_i}(\mathcal F) = \lm(R/\mathcal F{(e_i)}) $. Then 
by part $(1),$
$$\lm(R/\mathcal F{(e_i)})=e(I_i) - e_{(d-1)e_i}(\mathcal F) \leq e(I_i) - e_{1}(\mathcal F^{(i)})
\leq \lm(R/\mathcal F{(e_i)}) ,$$
where the last inequality follows by Theorem \ref{n1}. 
Hence $e_{(d-1)e_i}(\mathcal F)=e_1(\mathcal F^{(i)})$ and 
$e(I_i) - e_{1}(\mathcal F^{(i)}) = \lm(R/\mathcal F{(e_i)})$. Therefore, by Theorem \ref{HUOI}, 
$r(\mathcal F^{(i)}) \leq 1.$ \\
Conversely, suppose $r(\mathcal F^{(i)}) \leq 1$ and $e_{(d-1)e_i}(\mathcal F)=e_1(\mathcal F^{(i)})$. 
Again, by Theorem \ref{HUOI}, $e(I_i)-e_1(\mathcal F^{(i)})=\lm(R/\mathcal F{(e_i)}).$ 
Hence $e(I_i) - e_{(d-1)e_i}(\mathcal F) = \lm(R/\mathcal F{(e_i)}).$
\eepf

\bt
\cite[Theorem 5.7]{msv}\label{e_2=0}
Let $(R,\mm)$ be a Cohen-Macaulay local ring of dimension two and $\idl$ be $\mm$-primary ideals of $R.$ Let $\fil=\lbrace\fil(\n)\rbrace_{\n\in\ZZ^s}$ be an $\I$-admissible filtration of ideals in $R.$ Then
$e_{\underline 0}(\fil)=0$ implies $e(I_i)-e_{e_i}(\fil)=\lm\lf\frac{R}{\breve\fil(e_i)}\rg$ for all $\ii.$ Suppose $\breve\fil$ is $\I$-admissible filtration, then the converse is also true.
\bpf
Let $e_{\underline 0}(\fil)=0.$ By Proposition \ref{zero}, 
$[H_{\R_{++}}^1(\mathcal{R}'(\fil))]_{\underline 0}
=0.$ Hence by Theorem \ref{r3}, $$\lm_{R}[H_{\R_{++}}^2(\mathcal{R}'(\fil))]_{\underline 0}=
e_{\underline 0}(\fil)=0.$$ By Lemma \ref{result 4}, $\lm_{R}[H_{\R_{++}}^2(\mathcal{R}'(\fil))]_{e_i}=0$ 
for all $\ii.$ Then using Theorem \ref{r3} and Proposition \ref{zero}, 
$\po(e_i)-\ho(e_i)=-\lm\lf\frac{\breve\fil(e_i)}{\fil(e_i)}\rg$ for all $\ii.$ 
Hence $e(I_i)-e_{e_i}(\fil)=\lm\lf\frac{R}{\breve\fil(e_i)}\rg$ for all $\ii.$ \\
Suppose $\breve\fil$ is $\I$-admissible filtration and $e(I_i)-e_{e_i}(\fil)=
\lm\lf\frac{R}{\breve\fil(e_i)}\rg$ for all $\ii.$ Then by \cite[Proposition 3.1]{msv} 
and Theorem \ref{two}, 
for all $\n\geq \underline{0}$ and $\ii,$ 
$$[H^0_{G_i(\breve{\mathcal F})_{++}}(G_i(\breve{\mathcal F}))]_{\n}=
\frac{\breve{\breve\fil}(\n+e_i)\cap{\breve\fil}(\n)}{\breve\fil(\n+e_i)}=0.$$ 
Since the Hilbert polynomial of $\breve\fil$ is same as the Hilbert polynomial of $\fil,$ by \cite[Theorem 5.3]{msv}, 
\beqnn \label{zzxx}
P_{\breve\fil}(\n)=H_{\breve\fil}(\n)  \mbox{ for all }\n \geq 0.
\eeqnn 
Thus taking $\n=\underline 0$ in the equation (\ref{zzxx}), we get $e_{\underline 0}(\fil)=e_{\underline 0}(\breve\fil)=0.$
\eepf
\et
As a consequence of the above theorem we get a theorem of Huneke \cite[Theorem 4.5]{huneke} for integral closure filtrations. We also obtain a result by Itoh \cite[Corollary 5]{itoh}  following from the above theorem.
\begin{corollary}\cite[Corollary 5]{itoh}\cite[Corollary 5.8]{msv}
Let $(R,\mm)$ be a Cohen-Macaulay local ring of dimension two and $I$ be $\mm$-primary ideal of $R.$ Let $Q$ be any minimal reduction of $I.$ Then the following are equivalent.
\ben{
\item[(1)] $e_1(I)-e_0(I)+\lm\lf\frac{R}{\breve I}\rg=0.$
\item[(2)] ${\breve I}^2=Q\breve I.$
\item[$(2')$] $\widebreve{I^2}=Q\breve I.$
\item[(3)] $\widebreve{I^{n+1}}=Q^n\breve I$ for all $n\geq 1.$
\item[(4)] $e_2(I)=0.$
}\een
\bpf
We prove $(4) \Rightarrow (3) \Rightarrow (2') \Rightarrow (2) \Rightarrow (1) \Rightarrow (4).$ \\
$(4)\Rightarrow(3):$ 
Let $\mathcal F=\lbrace \widebreve{I^n}\rbrace_{n\in\ZZ}.$ Since $e_2(\fil)=e_2(I)=0,$ by Theorem \ref{e_2=0} and Theorem \ref{HUOI}, the 
result follows.\\
$(3) \Rightarrow (2'):$ Put $n=1$ in $(3)$. \\ 
$(2')\Rightarrow(2)$ Consider the filtration 
$\fil=\lbrace I^n\rbrace_{n\in\ZZ}.$ Then by \cite[Proposition 3.2.3]{blancafort thesis}, 
for all $n\geq 0,$ $\widebreve{I^n}=\bigcup\limits_{k\geq 1}(I^{nk+n}:I^{nk}).$ 
It suffices to show that ${\breve I}^2\subseteq \widebreve{I^2}.$ Let $x,y\in {\breve I}.$ Then 
for some large $k,$ $xI^k\subseteq I^{k+1}$ and $yI^k\subseteq I^{k+1}.$ 
Hence $xyI^{2k}\subseteq I^{2k+2}.$ This implies that $\breve{I}^2\subseteq \widebreve{I^2}.$ \\
$(2)\Rightarrow(1):$ Follows from \cite[Theorem 2.1]{huneke}.\\ 
$(1)\Rightarrow(4):$ Let $\fil=\lbrace I^n\rbrace_{n\in\ZZ}.$ Since $\breve \fil$ 
is an $I$-admissible filtration, the result follows by Theorem \ref{e_2=0}. 
\eepf
\end{corollary}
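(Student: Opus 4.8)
The natural route is a cyclic chain $(4)\Rightarrow(3)\Rightarrow(2')\Rightarrow(2)\Rightarrow(1)\Rightarrow(4)$, with the Ratliff--Rush closure filtration playing the role of engine. Throughout I would fix $\fil=\{I^n\}$ and write $\breve\fil$ for its closure filtration; for the $I$-adic filtration one checks directly that $\breve\fil(n)=\widebreve{I^n}$, so in particular $\breve\fil(1)=\breve I$, that $\breve\fil$ stabilizes to $\fil$ in large degrees by \cite[Proposition 3.1]{msv} and is therefore $\I$-admissible with the same Hilbert polynomial as $\fil$. The first thing to record is the dictionary between the two coefficient conventions in the case $s=1$, $d=2$: $e_{\underline 0}(\fil)=e_2(I)$, $e_{e_1}(\fil)=e_1(I)$, and $e(I)=e_0(I)$, together with $\breve\fil(e_1)=\breve I$. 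Under this dictionary Theorem \ref{e_2=0} is verbatim the equivalence $(4)\Leftrightarrow(1)$: its forward half gives $(4)\Rightarrow(1)$ for $\fil$, and its converse half gives $(1)\Rightarrow(4)$, the latter relying on $\breve\fil$ being $\I$-admissible.

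For $(4)\Rightarrow(3)$ I would first use the forward half of Theorem \ref{e_2=0} on $\fil$ to turn $e_2(I)=0$ into $e_0(I)-e_1(I)=\lm(R/\breve I)$. Since $\breve\fil$ is an $\I$-admissible filtration with $\breve\fil(1)=\breve I$ and $e_0(\breve\fil)=e_0(I)$, $e_1(\breve\fil)=e_1(I)$, this says exactly $e_0(\breve\fil)-e_1(\breve\fil)=\lm(R/\breve\fil(1))$, so Theorem \ref{HUOI} applies to $\breve\fil$ and forces $r(\breve\fil)\le 1$ and $\breve\fil=\{\breve I^{\,n}\}$. Reading $Q$ as a minimal reduction of $\breve I$ (it is one, since $\breve I$ and $I$ share their integral closure), the bound $r(\breve\fil)\le 1$ gives $\breve\fil(n+1)=Q\breve\fil(n)$ for all $n\ge 1$, and iterating down to $\breve\fil(1)=\breve I$ yields $\widebreve{I^{n+1}}=Q^n\breve I$, which is $(3)$. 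The specialization $n=1$ is $(3)\Rightarrow(2')$.

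The only hands-on implication is $(2')\Rightarrow(2)$. Invoking \cite[Proposition 3.2.3]{blancafort thesis} in the form $\widebreve{I^n}=\bigcup_{k\ge 1}(I^{nk+n}:I^{nk})$, it suffices to prove the single inclusion $\breve I^{\,2}\subseteq\widebreve{I^2}$, the reverse inclusion and the equality with $Q\breve I$ being handed to me by $(2')$. Given $x,y\in\breve I$ I would choose $k$ with $xI^k\subseteq I^{k+1}$ and $yI^k\subseteq I^{k+1}$; then $xyI^{2k}\subseteq I^{2k+2}$, so $xy\in\widebreve{I^2}$. For $(2)\Rightarrow(1)$ I would read $\breve I^{\,2}=Q\breve I$ as saying the $\breve I$-adic filtration has reduction number at most one, and apply the Huneke--Ooishi equality (\cite[Theorem 2.1]{huneke}, or Theorem \ref{HUOI} to $\{\breve I^{\,n}\}$) to obtain $e_0(\breve I)-e_1(\breve I)=\lm(R/\breve I)$; because $\breve I$ and $I$ have the same Hilbert polynomial, $e_i(\breve I)=e_i(I)$ and this is $(1)$. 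Finally $(1)\Rightarrow(4)$ is just the converse half of Theorem \ref{e_2=0} applied to $\fil$.

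I expect the main obstacle to be not any single implication but the foundational bookkeeping surrounding the Ratliff--Rush closure. I must be sure that $\breve\fil=\{\widebreve{I^n}\}$ (and likewise $\{\breve I^{\,n}\}$) is a genuine $\I$-admissible filtration, since this admissibility is precisely what legitimizes both the converse direction of Theorem \ref{e_2=0} in $(1)\Rightarrow(4)$ and the application of Theorem \ref{HUOI} in $(4)\Rightarrow(3)$; and I must confirm the identification $\breve\fil(n)=\widebreve{I^n}$ together with $\breve\fil(1)=\breve I$, so that the two a priori different closure constructions coincide. Both facts rest on the stabilization $\breve\fil(n)=I^n$ for large $n$ from \cite[Proposition 3.1]{msv}; once they are secured, every step of the cycle reduces to a short invocation of Theorems \ref{e_2=0} and \ref{HUOI} and the closure formula.
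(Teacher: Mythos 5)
Your proposal is correct and follows essentially the same route as the paper: the identical cyclic chain $(4)\Rightarrow(3)\Rightarrow(2')\Rightarrow(2)\Rightarrow(1)\Rightarrow(4)$, with Theorem \ref{e_2=0} and Theorem \ref{HUOI} applied to $\breve\fil=\{\widebreve{I^n}\}$ driving $(4)\Rightarrow(3)$ and $(1)\Rightarrow(4)$, the same explicit computation for $(2')\Rightarrow(2)$, and the same appeal to Huneke for $(2)\Rightarrow(1)$. You merely make explicit the bookkeeping (admissibility of $\breve\fil$, the coefficient dictionary, independence of the reduction) that the paper leaves implicit.
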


\end{document}